\DeclareMathOperator{\er}{\mathbb{E}}
\DeclareMathOperator{\pr}{\mathbb{P}}
\DeclareMathOperator{\du}{{\rm d}\it u}
\DeclareMathOperator{\dv}{{\rm d}\it v}
\DeclareMathOperator{\dw}{{\rm d}\it w}
\DeclareMathOperator{\dx}{{\rm d}\it x}
\newcommand{\simdis}{\stackrel{d}{=}}
\newcommand{\II}{{\rm  1~\hspace{-1.4ex}l}}
\newcommand\NN{\mathbb{N}_0}
\newcommand\No{\mathbb{N}}
\newcommand\RR{\mathbb{R}}
\newcommand\RP{[0,\infty)}
\newcommand\CM{\mathcal{C\!M}}
\newcommand\CA{\mathcal{C\!A}}
\newcommand\BF{\mathcal{BF}}
\newcommand\CF{\mathcal{CF}}
\newtheorem{theorem}{Theorem}
\newtheorem{definition}{Definition}
\newtheorem{remark}{Remark}
\newtheorem{corollary}{Corollary}
\newtheorem{lemma}{Lemma}
\newtheorem{example}{Example}
\newtheorem{proposition}{Proposition}
\newcommand\rsetminus{\mathbin{\mathpalette\rsetminusaux\relax}}
\newcommand\rsetminusaux[2]{\mspace{-4mu}
  \raisebox{\rsmraise{#1}\depth}{\rotatebox[origin=c]{-20}{$#1\smallsetminus$}}
 \mspace{-4mu}
}
\newcommand\rsmraise[1]{%
  \ifx#1\displaystyle .8\else
    \ifx#1\textstyle .8\else
      \ifx#1\scriptstyle .6\else
        .45%
      \fi
    \fi
  \fi}
\newcommand\msetminus{\rsetminus\!}
\title[$\CM$ and $\BF$ properties of functions are characterized by their restriction on $\NN$]{Complete  monotonicity and Bernstein properties of functions are characterized by their restriction on $\NN$}
\author[Rafik Aguech  and Wissem Jedidi]{Rafik Aguech$^{\diamond,\star}$ and Wissem Jedidi$^{\diamond,\star \star}$}
\address{\rm $^{\diamond}$ Department of Statistics \& OR, King Saud University, P.O. Box 2455, Riyadh 11451, Saudi Arabia.}
\address{\rm $^{\star}$ Universit\'e de Monastir, Facult\'e des Sciences de Monastir, D\'epartement de math\'ematiques, 5019 Monastir, Tunisie. {\em Email}: {\tt rafik.aguech@ipeit.rnu.tn}}
\address{\rm $^{\star \star}$ Universit\'e de Tunis El Manar, Facult\'e des Sciences de Tunis, LR11ES11 Laboratoire d'Analyse Math\'ematiques et Applications, 2092, Tunis, Tunisie.  {\em Email} : {\tt wissem\_jedidi@yahoo.fr}}
\date{\today}
\begin{document}
\maketitle
\begin{abstract} We give several new characterizations  of completely monotone functions ($\CM$) and Bernstein functions ($\BF$) via two approaches:
the first one is driven algebraically via  elementary preserving mappings and the second one is developed in terms of the behavior of their  restriction on $\NN$. We give a complete answer to the following question: {\it Can we affirm that a function $f$  is completely monotone (resp. a Bernstein function) if we know that the sequence ${\left(f(k)\right)}_{k}$ is completely monotone (resp. alternating)?} This approach constitutes a kind of converse of Hausdorff's moment characterization theorem  in the context of completely monotone sequences.
\end{abstract}
{\footnotesize {\bf Keywords}: Completely monotone functions, completely monotone sequences, Bernstein functions, completely alternating functions, completely alternating  sequences, Hausdorff moment problem, Hausdorff moment sequences, self-decomposability.}
\section{Introduction}
Traditionally, completely monotone functions ($\CM$) are recognized as Laplace transforms of positive measures and Bernstein functions  ($\BF$) are their positive antiderivatives. The literature devoted to these two classes of functions is impressive since they have remarkable applications in various branches, for instance, they play a role in potential theory, probability theory, physics, numerical and asymptotic analysis, and combinatorics.  A detailed collection of the most important properties of completely monotone functions can be found in the monograph of Widder \cite{widder} and for Bernstein functions, the reader is referred to the elegant manuscript of Schilling,  Song and Vondra\v{c}ek \cite{SSV}. Hausdorff's moment characterization theorem \cite{haus} is explained in details, and also in the context of measures on commutative semigroup in the Book of Berg, Christensen and Ressel \cite{berg1}.  The references \cite{berg1} and \cite{SSV} were a major support in the elaboration of this paper and constitute for us a real source of inspiration.

Theorem \ref{bergh} below, is borrowed from \cite{berg1} and gives the complete characterization of completely monotone (respectively alternating) sequences: a sequence ${(a_k)}_{k}$ is interpolated by a function $f$ in $\CM$ (respectively $\BF$) if and only if ${(a_k)}_{k}$  completely monotone (respectively alternating) sequence and minimal (see Definition \ref{WA} for minimality). Completely monotone sequences are also known as  the Hausdorff moment sequences. In this spirit, a natural question prevailed, {\it what about the converse?} i.e:
\begin{quote}{\it
Can we affirm that a function $f$ belongs to   $\CM$ (respectively $\BF$) if we know that the sequence ${\left(f(k)\right)}_{k}$ is completely monotone (respectively alternating)? In other terms, could a  completely monotone (respectively alternating)  and minimal sequence ${(a_k)}_{k}$ be interpolated by a  regular enough function $f$, which is not in $\CM$ (respectively $\BF$)?
}\end{quote}
We prove that under natural regularity assumptions on $f$, the answer is affirmative for the first question (and then infirmative for the second) and this constitutes a kind of converse of Hausdorff's moment characterization theorem \cite{haus}.  Mai, Schenk and Scherer \cite{mai} adapted  a Widder's  result \cite{widder} and used a specific technique from Copula theory in  order to state, in  their Lemma 3.1 and Theorem 1.1,  that:

\noindent (i) a continuous  function $f$ with $f(0)=1$ belongs to $\CM$  if and only if the sequence ${\left(f(xk)\right)}_{k}$ is completely monotone for every $x \in\mathbb{Q}\cap [0,\infty)$;

\noindent (ii) a continuous  function $f$ with $f(0)=0$ belongs to $\BF$ and is self-decomposable if and only if the sequence
${\left(f(xk)- f(yk)\right)}_{k}$ is completely alternating for every $x>y>0$. (See Section \ref{self} below for the definition of self-decomposable Bernstein functions).

The idea of this paper was born when we wanted to  remove  the dependence on $x$ in characterizations (i) and (ii) and  to study general non bounded completely monotone functions and general Bernstein functions. Our answer to the question is given  in Theorems \ref{theorem4} and \ref{theorem5} below that say:

\noindent (iii) a bounded function  $f$ belongs to $\CM$  if and only if it has an holomorphic extension on $Re(z)>0$ which remains bounded there and  the sequence $\big(f(xk)\big)_{k\geq 0}$ is completely monotone and minimal for some (and hence for all)  $x>0$. If  $f$ is unbounded, then a shifting condition  is necessary;

\noindent (iv) a bounded function $f$   is a Bernstein function if and only if  it has an holomorphic extension on $Re(z)>0$,  and  the sequence
$\big(f(xk)\big)_{k\geq 0}$ is completely alternating and minimal for some (and hence for all)  $x>0$.  If  $f$ is unbounded, then a boundedness condition on the increments is necessary.

For each of Theorems \ref{theorem4} and \ref{theorem5} we shall give two proofs based on two different approaches, the first one uses Blaschke's theorem on the zeros of a function on the open unit disk and the second one is based on a Greogory-Newton expansion of holomorphic functions (see Section \ref{prerequisite} below for the last two concepts). We emphasize that these two approaches require some boundedness (especially in the completely monotone case). In Corollary 4.2 of Gnedin and Pitman \cite{gned}  the necessity part of (iv) above is stated without the holomorphy and minimality condition, their formulation is equivalent to Theorem \ref{bergh} below. We discovered the idea of our second proof  (for the Bernstein property context) hidden in the remark right after their corollary. The authors surmise that the sufficiency part of (iv) could be proved by  Gregory-Newton expansion of Bernstein functions and we will show that their idea works.  Since we are studying general, non necessarily bounded functions in $\CM$ and in $\BF$, there was a price to pay in order to avoid these kind of restrictive conditions. For this purpose, we develop in Section \ref{toolfunction} and \ref{toolsequence} there several algebraic tools, based on the scale, shift and difference operators, giving new characterizations for the $\CM$ and $\BF$ classes. We did our best to remove redundant assumptions of regularity (such as continuity or differentiability or boundedness or global dependence on parameters) in the our sufficiency conditions. This kind of redundancy often appears, because the classes $\CM$ and $\BF$ are very rich in information. These tools, that we find intrinsically useful, can also be considered as a major contribution in this work. They were also crucial in the proofs of the results given in Section \ref{main}. Throughout this paper, we give different proofs, whenever it is possible, and when the approaches  were clearly distinct.   \\

The paper is organized as follows. Section \ref{basic} gives the basic setting and definitions. In Section \ref{toolfunction} and \ref{toolsequence}, we recall classical characterizations of complete monotonicity and alternation for functions and sequences, we develop several other characterizations  and we discuss the concept of minimal sequences.  Section \ref{prerequisite} is devoted to specific pre-requisite for the proofs of the main results. We recall there and adapt some results around functional iterative equations and asymptotic of differences of functions. We also adapt some results stemming from complex analysis and  from interpolation theory. Section \ref{proof} is devoted to the proofs and Section \ref{self}  gives an alternative characterization  for self-decomposable Bernstein functions to point point (ii) above, in the spirit of point (iv) above.
\section{Basic notations and definitions}\label{basic}

Throughout  this paper, $\NN$ denotes the set of non-negative integers and $\No=\NN\msetminus\{0\}$. A sequence ${(a_k)}_{k\in \NN}$ is seen as a function $a:\NN \to\mathbb{R}$ so that $a(k)=a_k$. The symbols $\wedge$ and $\vee$ denote respectively the $\min$ and the $\max$. All the considered functions are measurable, the measures   are positive, Radon with support contained in $[0,\infty)$.  For functions $f: D\subset \mathbb{C}\to \mathbb{C}$, the scaling, the shift and the difference operators acting on them are respectively denoted, whenever these are well defined, by
$$\begin{array}{ccll}
\sigma_c f(x) &:=& f(cx),  & \quad\sigma =\sigma_1={\rm \it Identity}, \\
\tau_c f(x) &:=& f(x+c),  & \quad\tau =\tau_1, \\
\Delta_c f(x) &:=& f(x+c)-f(x),  & \quad\Delta =\Delta_1,\\
\theta_c f(x) &:=& f(c)- f(0)+ f(x)- f(x+c),  & \quad\theta =\theta_1,
\end{array}$$
and  their iterates are given by
$\sigma_c^0f=\tau_c^0f=\Delta_c^0 f=\theta_c^0f=f$ and for every $n\in \No$,
$$\sigma_c^n=\tau_{c^n},\quad \tau_c^n=\tau_{cn},\quad\Delta_c^n f = \Delta_c(\Delta_c^{n-1} f),\quad \theta_c^n f= (-1)^n \big(\Delta_c^n f-\Delta_c^n
f(0)\big),$$ \noindent so that for every $n\in \NN,$
\begin{eqnarray}\label{delta}
\Delta_c^n f(x)&=&\sum_{i=0}^n \,\binom{n}{i} \,(-1)^{n-i} \, f(x+ic)\\
\theta_c^n f&=&\sum_{i=0}^n \,\binom{n}{i} \,(-1)^{i} \,\big(f(x+ic)-f(ic)\big).\nonumber
\end{eqnarray}
\begin{definition}[Berg \cite{berg1} p. 130] Let $D=(0,\infty)$ or $[0,\infty)$ or $\NN$. A function $f:D\to f(D)$ is called completely monotone on $D$, and we denote $\;f\in \CM(D),\;$  if $\;f(D)\subset [0,\infty)\;$ (respectively completely alternating if  $\;f(D)\subset \mathbb{R}$ and we  denote $\;f\in \CA(D)$\;), if for  all finite sets $\;\{c_1, \cdots, c_n\}\subset D\;$ and $\;x\in D\;$, we have
$$(-1)^n \Delta_{c_1}\cdots \Delta_{c_n}f(x)\geq 0 \quad  \mbox{\it (respectively}\;\; \leq 0).$$
\label{defi}\end{definition}
\begin{remark}  (i) Every function  $f$ in  $\CM(D)$  (respectively $\CA(D)$) is non-increasing (respectively non-decreasing). We will see later on that $f$ is necessarily decreasing (respectively increasing) when it is not a constant.

(ii) A non-negative function $f$ belongs to $\CM(D)$   if and only if $-\Delta_cf $ belongs to $\CM(D)$ for every $c \in D\msetminus  \{0\}$.

(iii) By \cite[Lemma 6.3 p. 131]{berg1}, a function $f$ belongs to $\CA(D)$ if and only if for every $c\in D\msetminus \{0\}$, the
function $\Delta_c f$ belongs to $\CM(D)$.

(iv) By  linearity of the difference operators, the classes $\CM(D)$ and $\CA(D)$ are convex cones.\label{r1}
\end{remark}
\section{Classical characterizations of completely monotone and alternating functions and additional characterizations via algebraic transformations}\label{toolfunction}
\subsection{Completely monotone functions} Characterization of completely monotone functions is an old story and is due to the seminal works of Bernstein, Bochner and Schoenberg.  A nice presentation could be found in the monograph of Schilling et al. \cite{SSV}:
\begin{theorem} \cite[Proposition 1.2 and Theorem 4.8]{SSV} The following three assertions are equivalent:
\begin{enumerate}[\;(a)]
\item $\Psi$   is completely monotone on $(0,\infty)$ (respectively on
$[0,\infty)$);

\item $\Psi$  is represented as the Laplace transform of a unique Radon (respectively finite) measure $\nu$ on $[0,\infty)$:
\begin{equation}\label{lap}
\Psi(\lambda)=\int_{[0,\infty)} e^{-\lambda x} \nu (\dx), \quad \lambda >0\;\; \mbox{(respectively    $\; \lambda \geq0$)};
\end{equation}

\item $\Psi$  is  infinitely differentiable  on $(0,\infty)$ (respectively continuous on $[0,\infty)$,  infinitely differentiable  on $(0,\infty)$) and satisfies  $(-1)^n \,\Psi^{(n)}\geq 0$ for every $n \in \NN$.
\end{enumerate}\label{comf}\end{theorem}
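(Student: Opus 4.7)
The plan is to prove the cycle $(b) \Rightarrow (c) \Rightarrow (a) \Rightarrow (b)$. For $(b) \Rightarrow (c)$, I would differentiate \eqref{lap} under the integral sign: given $\lambda>0$ and any $\lambda_0\in(0,\lambda)$, the function $x\mapsto x^n e^{-\lambda x}$ is dominated by a constant multiple of $e^{-\lambda_0 x}$, which is $\nu$-integrable since $\Psi(\lambda_0)<\infty$. Dominated convergence produces $\Psi^{(n)}(\lambda) = \int_{[0,\infty)} (-x)^n e^{-\lambda x}\,\nu(\dx)$, yielding the sign condition; continuity at the origin when $\nu$ is finite then follows by monotone convergence.

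For $(c) \Rightarrow (a)$, I would prove by induction on $n$ the representation
\begin{equation*}
\Delta_{c_1}\cdots\Delta_{c_n}\Psi(x) \;=\; \int_0^{c_1}\!\!\cdots\int_0^{c_n} \Psi^{(n)}\bigl(x + t_1 + \cdots + t_n\bigr)\, dt_1\cdots dt_n,
\end{equation*}
which is immediate from the fundamental theorem of calculus applied variable by variable. Multiplying by $(-1)^n$ and invoking the sign of $\Psi^{(n)}$ closes the step.

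The hard direction is $(a)\Rightarrow(b)$, which I would split into two stages. \emph{Stage~1 (smoothness).} By Remark~\ref{r1}(ii) the divided difference $-\Delta_c \Psi(x)/c$ belongs to $\CM$ for every $c>0$, hence by Remark~\ref{r1}(i) it is non-increasing in $x$; a short additivity argument writing $\Delta_{c+c'}$ in terms of $\Delta_c$ and $\tau_c\Delta_{c'}$ shows it is also non-increasing in $c$. Letting $c\downarrow 0$ yields a pointwise limit $\Psi'(x)\in[-\infty,0]$, and finiteness on $(0,\infty)$ follows by integrating $-\Delta_c\Psi/c$ over a compact subinterval and passing to the limit against the bounded quantity $\Psi(x)-\Psi(y)$. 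Iterating gives that $\Psi$ is $C^\infty$ on $(0,\infty)$ with $(-1)^n\Psi^{(n)}\geq 0$, which is assertion $(c)$. \emph{Stage~2 (Bernstein--Widder).} Define the approximating measures
\begin{equation*}
\nu_n(\dx) \;=\; \frac{(-1)^n}{n!}\Bigl(\frac{n}{x}\Bigr)^{n+1}\Psi^{(n)}\Bigl(\frac{n}{x}\Bigr)\II_{(0,\infty)}(x)\,\dx;
\end{equation*}
the Post--Widder inversion formula implies $\int_0^\infty e^{-\lambda x}\nu_n(\dx)\to \Psi(\lambda)$ for every $\lambda>0$, a uniform tightness estimate stemming from the monotonicity of $\Psi$ permits Helly's selection theorem to extract a vaguely convergent subsequence, and the limit measure $\nu$ satisfies \eqref{lap} by uniqueness of the Laplace transform. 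Finiteness of $\nu$ corresponds to boundedness of $\Psi$.

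The main obstacle is Stage~1: extracting \emph{finite} derivatives of all orders from only the sign information on finite differences, and justifying the successive interchanges of limits without losing integrability near the origin. Once smoothness and the sign condition are available, Stage~2 is the classical Bernstein--Widder inversion and proceeds without further surprises once the tightness estimate is in hand.
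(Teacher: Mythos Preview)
The paper does not prove Theorem~\ref{comf}; it is quoted verbatim from \cite[Proposition~1.2 and Theorem~4.8]{SSV} as a classical background result, and no argument is supplied in the body of the paper. So there is no ``paper's own proof'' to compare your proposal against.

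That said, your outline is a faithful sketch of the standard Bernstein--Widder route: $(b)\Rightarrow(c)$ by dominated differentiation, $(c)\Rightarrow(a)$ by the iterated mean-value representation of finite differences, and $(a)\Rightarrow(b)$ in two stages (smoothness, then an approximate-inversion/Helly argument). Your self-identified obstacle in Stage~1 is real but surmountable: the cleanest way to organize it is to observe that the two-variable monotonicity of $-\Delta_c\Psi$ forces $\Psi$ to be convex and non-increasing, hence it possesses finite one-sided derivatives on $(0,\infty)$; the same reasoning applied to $-\Psi'_+$ (which inherits the finite-difference sign conditions) bootstraps to $C^\infty$. The only point where your sketch is genuinely thin is the iteration step---you need to check that the right derivative $-\Psi'_+$ again satisfies the full family of difference inequalities, not just non-negativity, and that left and right derivatives coincide. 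Once that is in hand, Stage~2 is routine.
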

The measure $\nu$ in (\ref{lap}) will be referred in the sequel as the representative measure of $\Psi$.
\begin{remark}  (i) Every function $\Psi \in \CM(0,\infty)$ such that $\Psi(0+)$ exists, naturally extends to a continuous bounded function in
$\CM[0,\infty)$, this is the reason why we identify, throughout this paper,  such functions $\Psi$ with their extension on $[0,\infty)$.

(ii) By Corollary 1.6 p. 5 in \cite{SSV}, the closure of $\CM[0,\infty)$ (with respect to pointwise convergence) is $\CM[0,\infty)$. This insures that $\Psi \in \CM(0,\infty)$ if and only if $\tau_{c_n}\Psi \in \CM[0,\infty)$ for some positive sequence  $c_n$ tending to zero or equivalently $\tau_c f  \in \CM(0,\infty)$ for every $c>0$. It is also immediate that $\Psi \in \CM(0,\infty)$ if and only if  $\sigma_c f  \in \CM(0,\infty)$ for some (and hence for all) $c>0$.

(iii) It is not clear at all to see that   functions  in $\CM(0,\infty)$ are actually infinitely differentiable just using Definition \ref{defi}. The latter is clarified by point (b) of Theorem \ref{comf}.  Furthermore,  Dubourdieu \cite{dubour} pointed out that strict inequality prevails in point $(c)$ of for all non-constant completely monotone functions, for these and their derivatives are then strictly monotone.
\label{clos}\end{remark}

We start with a taste of what we can obtain as algebraic characterization. The following proposition  has to be compared with the Remark \ref{r1} (ii):
\begin{proposition} (a) A function $\Psi:(0,\infty) \to [0,\infty)$ belongs to $\CM(0,\infty)$  if and only if for some (and hence for all)  $c >0$ the function $\,-\Delta_c\Psi\,$  belongs to  $\CM(0,\infty)$ and the Laplace representative  measure in (\ref{lap}) of $\,-\Delta_c\Psi\,$ gives no mass to zero.

\noindent (b) In this case, the sequence of functions $(-\Delta_{nc})\Psi$ converges pointwise, locally uniformly, to a function in $\CM(0,\infty)$ that does not depend on $c$, more precisely $$\Psi(\lambda) =\lim_{x\to \infty}\Psi(x) + \lim_{n\to \infty}(-\Delta_{nc})\Psi(\lambda),\quad \lambda >0.$$
The same holds for the successive derivatives of $(-\Delta_{nc})\Psi$.
\label{proposition1}\end{proposition}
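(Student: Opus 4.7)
The forward direction of (a) is immediate: if $\Psi\in\CM(0,\infty)$ has Laplace representation $\Psi(\lambda)=\int_{[0,\infty)}e^{-\lambda x}\,\nu(\dx)$, then for every $c>0$,
$$-\Delta_c\Psi(\lambda)=\int_{[0,\infty)}e^{-\lambda x}(1-e^{-cx})\,\nu(\dx)=\int_{(0,\infty)}e^{-\lambda x}(1-e^{-cx})\,\nu(\dx),$$
so $-\Delta_c\Psi\in\CM(0,\infty)$ with representative measure $(1-e^{-cx})\,\nu(\dx)$, which carries no mass at the origin.

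For the converse direction, my plan is to fix $c>0$, set $g:=-\Delta_c\Psi$ with representative Radon measure $\mu$ on $[0,\infty)$ satisfying $\mu(\{0\})=0$, and telescope to recover $\Psi$ as a Laplace transform. Since $\Psi\geq 0$,
$$\Psi(\lambda)-\Psi(\lambda+nc)=\sum_{k=0}^{n-1}g(\lambda+kc)\leq \Psi(\lambda),$$
so the non-negative monotone series $\sum_{k\geq 0}g(\lambda+kc)$ converges and $L(\lambda):=\lim_n\Psi(\lambda+nc)\geq 0$ exists. Tonelli's theorem, together with the hypothesis $\mu(\{0\})=0$ that permits summing the geometric series $\sum_k e^{-kcx}$ on the support of $\mu$, then yields
$$H(\lambda):=\sum_{k=0}^{\infty}g(\lambda+kc)=\int_{(0,\infty)}\frac{e^{-\lambda x}}{1-e^{-cx}}\,\mu(\dx)\in\CM(0,\infty),$$
namely $H$ is the Laplace transform of the Radon measure $\tilde\mu(\dx):=\mu(\dx)/(1-e^{-cx})$ on $[0,\infty)$ (local finiteness near zero will follow from $H(\lambda)<\infty$ for $\lambda>0$). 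This produces the decomposition $\Psi=L+H$ with $L$ measurable, non-negative and $c$-periodic. The crucial remaining step is to argue that $L\equiv L_0\geq 0$ is constant; once this is done, $\Psi=L_0+H$ is the Laplace transform of $L_0\delta_0+\tilde\mu$, whence $\Psi\in\CM(0,\infty)$, and the ``hence for all $c>0$'' assertion follows from the forward direction.

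Part (b) will then be a direct consequence of the Laplace representation of the now-established $\Psi\in\CM(0,\infty)$: dominated convergence gives $\lim_{x\to\infty}\Psi(x)=\nu(\{0\})$, and for every $c>0$,
$$\lim_{n\to\infty}(-\Delta_{nc})\Psi(\lambda)=\Psi(\lambda)-\nu(\{0\})=\int_{(0,\infty)}e^{-\lambda x}\,\nu(\dx),$$
a function in $\CM(0,\infty)$ independent of $c$; summing the two limits recovers the claimed decomposition. Locally uniform convergence of the differences and of their successive derivatives will come from applying dominated convergence to $\int x^j e^{-\lambda x}\,\nu(\dx)$ over compact subsets of $(0,\infty)$, with dominating function $x^j e^{-\lambda_0 x}$ integrable against $\nu$ for $\lambda\geq\lambda_0>0$ (since $\Psi^{(j)}(\lambda_0)$ is finite).

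The step I expect to be hardest is the constancy of $L$ in the converse of (a): $c$-periodicity together with non-negativity of a measurable function does not on its own force constancy, so some additional structure must be exploited. The most natural approach is to use the algebraic characterizations of $\CM$ developed in the present section to show that the hypothesis for the given $c$ propagates to every $c'>0$, at which point Remark~\ref{r1}(ii) closes the argument.
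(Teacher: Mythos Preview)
Your forward direction and your telescoping construction of $H(\lambda)=\sum_{k\geq 0}g(\lambda+kc)$ coincide with what the paper presents as its ``second proof'' of the sufficiency part. The paper's first proof goes instead via Webster's theorem (Theorem~\ref{Webster}): it rewrites $\Psi(\lambda)-\Psi(\lambda+1)=h(\lambda)$ as the multiplicative equation $f(\lambda+1)=g(\lambda)f(\lambda)$ with $g=e^{-h}$ log-concave and $f=e^{\Psi-\Psi(1)}$, and then reads off the explicit product formula for the unique log-convex solution.

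The obstacle you isolated---constancy of the $c$-periodic remainder $L$---is genuine and cannot be filled from the stated hypotheses. For $c=1$ take
\[
\Psi(\lambda)=\frac{e^{-\lambda}}{1-e^{-1}}+1+\sin(2\pi\lambda),\qquad\lambda>0.
\]
Then $\Psi\geq 0$ and $-\Delta_1\Psi(\lambda)=e^{-\lambda}$ lies in $\CM(0,\infty)$ with representative measure $\delta_1$ (no mass at the origin), yet $\Psi'(1)=2\pi-e^{-1}/(1-e^{-1})>0$, so $\Psi\notin\CM(0,\infty)$. Hence the ``for some $c$'' version of (a) is false as written.

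Both of the paper's arguments share this gap. In the Webster approach, identifying $e^{\Psi-\Psi(1)}$ with the explicit product uses the \emph{uniqueness} of the log-convex solution, which presupposes that $\Psi$ is convex---precisely what remains to be shown. In the second proof, from $\Psi(\lambda)\geq\Psi(\lambda+nc)$ the paper asserts that $x\mapsto\Psi(x)$ is globally decreasing with a limit $\Psi(\infty)$; this is exactly the constancy of your $L$, and the example above defeats it. Your instinct was right: a clean statement needs either the hypothesis for \emph{all} $c>0$ (which brings one back to Remark~\ref{r1}(ii), the no-atom condition then being a refinement), or an additional regularity assumption on $\Psi$ such as convexity, under which a periodic remainder is forced to be constant. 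Your treatment of part (b), once $\Psi\in\CM(0,\infty)$ is granted, is correct and matches the paper.
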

\subsection{Completely alternating functions and Bernstein functions} The well known class $\BF$ of Bernstein functions consists of those functions  $\Phi:(0,\infty) \to [0,\infty)$,  infinitely differentiable on $(0,\infty)$   and satisfy  $(-1)^{n-1} \Phi^{(n)}(\lambda)\geq 0$, for every $\lambda>0$ and $n\in \No$. In other terms, $\Phi$ is a Bernstein function if it is  non-negative, infinitely differentiable and $\Phi' \in \CM(0,\infty)$.  It is also known (see Theorem 3.2 p. 21\cite{SSV} for instance) that any function $\Phi \in \BF$ admits a continuous extension on $[0,\infty)$, still denoted
$\Phi$,  and represented by
\begin{equation}\label{berep}
\Phi(\lambda)=q+ d\lambda +\int_{(0,\infty)}(1-e^{-\lambda x})\mu(\dx),   \quad \lambda \geq 0\,,
\end{equation}
\noindent where $q,\,d\geq 0$ and the so-called  L\'evy measure $\mu$ satisfies the integrability condition
$$\int_{(0,\infty)} (1-e^{-x})\, \mu(\dx) <\infty\quad \mbox{which is equivalent to}\quad \int_{(0,\infty)}\left(1 \wedge x \right) \mu(\dx) <\infty.$$
An integration by parts gives
$$\int_{(0,\infty)} e^{-\lambda x} \mu\big((x,\infty)\big) \,\dx = \frac{\Phi(\lambda)-q}{\lambda }-d,   \quad \lambda >0,$$
so that $q=\Phi(0),\; d=\lim_{x\to \infty} \frac{\Phi(x)}{x}$ and  the relation between $\Phi$ and the triplet $(q,d,\mu)$ becomes one-to-one. \\

The following proposition unveils the link between completely alternating functions and Bernstein functions:
\begin{proposition} 1) The class of Bernstein functions coincides with the class of non-negative and completely alternating functions on  $[0,\infty)$.

2) The class of completely alternating functions on  $(0,\infty)$ is given by
$$\CA(0,\infty)=\{f:(0,\infty)\to \RR,\; \mbox{\it differentiable}, \; s.t.\; f'\in \CM(0,\infty)\}.$$
In particular, if $g\in \CM(0,\infty)$, then $-g\in \CA(0,\infty)$.
\label{proposition2}\end{proposition}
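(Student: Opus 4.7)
The plan is to prove part (2) first, since it gives an analytic handle (differentiability and $\Phi' \in \CM$) that makes part (1) fall out of the Lévy--Khintchine representation together with the derivative characterization in Theorem \ref{comf}(c).

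For part (2), the main engine is Remark \ref{r1}(iii): $f\in\CA(0,\infty)$ iff $\Delta_c f\in\CM(0,\infty)$ for every $c>0$. I would first establish the ($\Leftarrow$) direction: assuming $f$ is differentiable with $f'\in\CM(0,\infty)$, use the commutation $(\Delta_c f)^{(m)}=\Delta_c f^{(m)}$ valid for all $m\ge 0$. For $m=0$, $\Delta_c f(\lambda)=\int_\lambda^{\lambda+c} f'(u)\,du\ge 0$. For $m\ge 1$, the function $(-1)^{m-1}(f')^{(m-1)}=(-1)^{m-1}f^{(m)}$ is non-negative and non-increasing (its derivative is $(-1)^{m-1}f^{(m+1)}=-(-1)^m(f')^{(m)}\le 0$), so $(-1)^m\Delta_c f^{(m)}\ge 0$. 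This gives the sign conditions of Theorem \ref{comf}(c) for $\Delta_c f$, hence $\Delta_c f\in\CM(0,\infty)$ and $f\in\CA(0,\infty)$.

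For the ($\Rightarrow$) direction of part (2), the hard step is extracting smoothness and derivative sign conditions purely from iterated difference inequalities. Assuming $f\in\CA(0,\infty)$, Remark \ref{r1}(iii) gives $\Delta_c f\in\CM(0,\infty)$ for every $c>0$, so by Theorem \ref{comf}(c) each $\Delta_c f$ is infinitely differentiable; then $f(\lambda)=f(\lambda+c)-\Delta_c f(\lambda)$ inherits $C^\infty$-smoothness on $(c,\infty)$ for every $c>0$, hence on all of $(0,\infty)$. Now exploit the commutation of $\Delta_c$ with differentiation: from $(-1)^n(\Delta_c f)^{(n)}=(-1)^n\Delta_c f^{(n)}\ge 0$ together with the mean value theorem applied to $f^{(n)}$, we get $(-1)^n c\,f^{(n+1)}(\xi_c)\ge 0$ for some $\xi_c\in(\lambda,\lambda+c)$. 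Letting $c\downarrow 0$ and using continuity of $f^{(n+1)}$ yields $(-1)^n f^{(n+1)}(\lambda)\ge 0$ for every $n\ge 0$ and $\lambda>0$, which is exactly the derivative criterion (Theorem \ref{comf}(c)) for $f'\in\CM(0,\infty)$. I expect this transfer via MVT from difference to derivative inequalities to be the core technical step.

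For part (1), ($\Rightarrow$) is a direct computation on the representation (\ref{berep}). An easy induction on the number of differences shows $\Delta_{c_1}\cdots\Delta_{c_n}\Phi(\lambda)=(-1)^{n-1}\int_{(0,\infty)}e^{-\lambda x}\prod_{i=1}^n(1-e^{-c_i x})\mu(\dx)$ for $n\ge 2$, while for $n=1$ the extra $q+d\lambda$ contributes an additional non-negative constant $dc$; in both cases $(-1)^n\Delta_{c_1}\cdots\Delta_{c_n}\Phi\le 0$, and $\Phi\ge 0$ by definition of $\BF$, hence $\Phi\in\CA[0,\infty)$. For ($\Leftarrow$), if $\Phi\ge 0$ and $\Phi\in\CA[0,\infty)$, restriction gives $\Phi\in\CA(0,\infty)$, so by part (2) $\Phi$ is differentiable with $\Phi'\in\CM(0,\infty)$; by Theorem \ref{comf}(c), $\Phi'$ is $C^\infty$ and satisfies $(-1)^{m}(\Phi')^{(m)}\ge 0$, which translates into $(-1)^{n-1}\Phi^{(n)}\ge 0$ for every $n\ge 1$, so $\Phi\in\BF$. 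Finally, the "in particular" statement follows by either a one-line check in the definition ($(-1)^n\Delta_{c_1}\cdots\Delta_{c_n}(-g)\le 0$ since the corresponding quantity for $g$ is $\ge 0$), or via part (2): $g\in\CM$ is smooth by Theorem \ref{comf}(c), and $(-g)'=-g'\in\CM$ since $(-1)^m(-g')^{(m)}=(-1)^{m+1}g^{(m+1)}\ge 0$.
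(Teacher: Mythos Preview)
Your argument for the $(\Rightarrow)$ direction of part (2) contains a circular step. You write that since each $\Delta_c f$ is $C^\infty$, the identity $f(\lambda)=f(\lambda+c)-\Delta_c f(\lambda)$ lets $f$ ``inherit $C^\infty$-smoothness on $(c,\infty)$''. But the right-hand side involves $f(\lambda+c)$, which is just $f$ evaluated at a shifted argument; knowing that $\Delta_c f$ is smooth says nothing about the regularity of $\lambda\mapsto f(\lambda+c)$ unless you already know $f$ is smooth on $(c,\infty)$. There is no bootstrap available, because you have no anchor interval on which $f$ is a priori $C^\infty$. Once this smoothness claim collapses, the subsequent mean-value step on $f^{(n)}$ has nothing to stand on.

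Your strategy can be rescued, but the repair bypasses the MVT altogether. From $\Delta_c f\ge 0$ and $\Delta_{c_1}\Delta_{c_2}f\le 0$ one sees that $f$ is nondecreasing and (midpoint, hence by measurability genuinely) concave on $(0,\infty)$; thus the right derivative $f'_+$ exists everywhere and $\frac{1}{c}\Delta_c f(\lambda)\to f'_+(\lambda)$ as $c\downarrow 0$. Each $\frac{1}{c}\Delta_c f$ lies in $\CM(0,\infty)$, and this cone is closed under pointwise limits, so $f'_+\in\CM(0,\infty)$ and is in particular continuous; a concave function with continuous one-sided derivative is $C^1$, whence $f'=f'_+\in\CM(0,\infty)$. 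For comparison, the paper takes a quite different route and avoids the regularity question entirely: it fixes $c=1$, treats $\Phi(\lambda+1)-\Phi(\lambda)=\Delta\Phi(\lambda)\in\CM$ as an iterative functional equation for $e^{\Phi(1)-\Phi}$, and applies Webster's theorem (Theorem~\ref{Webster}) to obtain an explicit integral representation of $\Phi$, from which $\Phi(\cdot+a)-\Phi(a)\in\BF$ for all $a>0$ and hence $\Phi\in\BF$ via \eqref{pc}.
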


It is clear that the subclass $\BF_b$ of bounded Bernstein function is given by
\begin{eqnarray*}
\BF_b&=&\{\Phi \in \BF, \; s.t.\; \lim_{\lambda \to \infty}\Phi(\lambda)<\infty\}\\
&=&\{\Phi \in \BF, \; s.t.\; \Phi(\lambda)=q+\int_{(0,\infty)} (1-e^{-x\lambda})\, \mu(\dx), \;\;  \mbox{with}\; q\geq 0, \; \mu\big((0,\infty)\big) <\infty\}
\end{eqnarray*}
and  that
\begin{equation}\Phi\in \BF_b\quad\mbox{\it if and only if $\quad\Phi \geq 0\;$ and}\; \;\Phi(\infty)-\Phi \in \CM[0,\infty).
\label{bcm}\end{equation}
We denote
\begin{eqnarray*}
\BF_b^0&=&\{\Phi \in \BF_b,\; s.t.\; \Phi(0)=0\}\\
&=&\{\Phi \in \BF, \; s.t.\; \Phi(\lambda)=\int_{(0,\infty)} (1-e^{-x\lambda})\, \mu(\dx), \;\;   \mbox{with}\; \mu\big((0,\infty)\big)  <\infty\}.
\end{eqnarray*}

We also have the following equivalences
\begin{eqnarray}
 \Phi \in \BF  &\Longleftrightarrow&\Phi \geq 0 \;\mbox{and}\; \sigma_c\Phi \in \BF  \;\mbox{for some (and hence for all)}\;  c>0 \label{bc}\\
&\Longleftrightarrow&\lambda \mapsto \Phi(\lambda+c) - \Phi(c)\in \BF,\quad\mbox{for every}\;  c>0. \label{pc}
\end{eqnarray}
Equivalence (\ref{bc}) immediate and (\ref{pc}) is justified as follows: by differentiation   get  $\Phi'(.+c) \in \CM[0,\infty)$,  for all $c>0$ and  closure of the class $\CM(0,\infty)$ (Corollary 1.6 p.5 \cite{SSV}) insures that $\Phi' \in  \CM(0,\infty)$. A natural question is to ask whether (\ref{pc}) remains true if expressed with a single fixed $c>0$. The answer is negative because for every $\Phi_0 \in \BF $, the function $\Phi(\lambda)=\Phi_0(|\lambda -c|),\;
\lambda \geq 0,$ is not in $\BF$ despite that $\lambda \mapsto \Phi(\lambda+c) - \Phi(c) \in \BF$. A closed transformation is studied in Corollary 3.8 (vii) p. 28 in \cite{SSV} which says that $\Phi \in \BF$ yields $\theta_c\Phi \in \BF$ for every  $c>0$. We propose the following improvement:
\begin{proposition} (a) A function $\Phi: [0,\infty)\longrightarrow \RP$ belongs to $\BF$ if and only if for some (and hence for all) $c>0$,
$$\lambda \mapsto \theta_c\Phi(\lambda) = \Phi(c)-\Phi(0)+\Phi(\lambda)-\Phi(\lambda+c)\in \BF_b^0.$$
\noindent \noindent (b) In this case, the sequence of functions $\theta_{nc} \Phi$  converges pointwise, locally uniformly, to a function in $\BF$, null in zero, that does not depend on $c$. More precisely
$$\Phi(\lambda) = \Phi(0) + \lambda \lim_{x\to \infty} \frac{\Phi(x)}{x} + \lim_{n\to \infty} \theta_{nc} \Phi(\lambda),\quad \lambda \geq 0.$$
The same holds for the successive derivatives of $\theta_{nc} \Phi$.
\label{proposition3}\end{proposition}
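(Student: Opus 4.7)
The plan is to establish both parts by reading everything through the L\'evy--Khintchine representation when $\Phi\in\BF$ is available, and by inverting $\theta_c$ via a functional-equation argument when only $\theta_c\Phi\in\BF_b^0$ is known. For the direct implication of (a) and for (b), I start from $\Phi(\lambda)=q+d\lambda+\int_{(0,\infty)}(1-e^{-\lambda x})\mu(\dx)$ in (\ref{berep}). Substituting into $\theta_{nc}\Phi$ makes the drift and killing terms cancel and leaves
\[
\theta_{nc}\Phi(\lambda)=\int_{(0,\infty)}(1-e^{-ncx})(1-e^{-\lambda x})\,\mu(\dx),\qquad n\in\No.
\]
For $n=1$ this is a L\'evy--Khintchine representation with finite L\'evy measure $\nu_c:=(1-e^{-cx})\mu$ (finiteness from $\int(1\wedge x)\mu(\dx)<\infty$), so $\theta_c\Phi\in\BF_b^0$. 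Monotone convergence $(1-e^{-ncx})\nearrow 1$ then yields the pointwise limit $\theta_{nc}\Phi(\lambda)\to\Phi^\dagger(\lambda):=\int(1-e^{-\lambda x})\mu(\dx)\in\BF$, null at zero and independent of $c$; the stated identity follows once $q=\Phi(0)$ and $d=\lim_{x\to\infty}\Phi(x)/x$ are identified. Dominated convergence applied after differentiating under the integral handles local uniform convergence and the successive derivatives.

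For the converse of (a), I assume $\theta_c\Phi\in\BF_b^0$ for some $c>0$, so that $\theta_c\Phi(\lambda)=\int(1-e^{-\lambda x})\nu_c(\dx)$ with $\nu_c$ finite of total mass $M_c$. Rewriting the defining identity gives
\[
\Delta_c\Phi(\lambda)=\bigl[\Phi(c)-\Phi(0)-M_c\bigr]+\int e^{-\lambda x}\,\nu_c(\dx),
\]
and the telescoping $\Phi(nc)=\Phi(0)+n(\Phi(c)-\Phi(0))-\sum_{k=0}^{n-1}\theta_c\Phi(kc)$ combined with $\theta_c\Phi(kc)\nearrow M_c$ forces $dc:=\Phi(c)-\Phi(0)-M_c\geq 0$, because otherwise $\Phi(nc)\to-\infty$ contradicts $\Phi\geq 0$. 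Hence $\Delta_c\Phi\in\CM[0,\infty)$ with representative measure $dc\,\delta_0+\nu_c$. The natural candidate L\'evy measure for $\Phi$ is $\mu(\dx):=\nu_c(\dx)/(1-e^{-cx})$; since $(1\wedge x)/(1-e^{-cx})$ is bounded, one has $\int(1\wedge x)\mu(\dx)<\infty$, and
\[
\tilde\Phi(\lambda):=\Phi(0)+d\lambda+\int(1-e^{-\lambda x})\mu(\dx)
\]
lies in $\BF$ and satisfies $\Delta_c\tilde\Phi=\Delta_c\Phi$ by direct computation, so $G:=\Phi-\tilde\Phi$ is $c$-periodic with $G(0)=0$.

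The remaining and hardest step is to identify $\Phi$ with $\tilde\Phi$. My plan is to use the telescoping identity $\theta_{nc}\Phi(\lambda)=\sum_{k=0}^{n-1}\theta_c\Phi(\lambda+kc)-\sum_{k=1}^{n-1}\theta_c\Phi(kc)$, which together with the Laplace form of $\theta_c\Phi$ and Fubini--Tonelli gives the closed form $\theta_{nc}\Phi(\lambda)=\int(1-e^{-\lambda x})\frac{1-e^{-ncx}}{1-e^{-cx}}\,\nu_c(\dx)$, whence $\theta_{nc}\Phi(\lambda)\to\Phi^\dagger(\lambda)$ by monotone convergence. Combining with $\theta_{nc}\Phi(\lambda)=\Phi(nc)-\Phi(0)+\Phi(\lambda)-\Phi(\lambda+nc)$ yields $\Phi(\lambda+nc)-\Phi(nc)\to\Phi(\lambda)-\Phi(0)-\Phi^\dagger(\lambda)$; an independent iteration of $\Delta_c\Phi$ along the $c$-grid together with the decay $e^{-ncx}\to 0$ in the DCT shows that the same limit equals $d\lambda$ when $\lambda\in c\NN$. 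Transferring this identity from the $c$-grid to arbitrary $\lambda\geq 0$ is the main obstacle and requires ruling out the $c$-periodic residue $G$, using the measurability of $\Phi$, the non-negativity constraint, and the smoothness transferred from $\theta_c\Phi\in\BF_b^0$ to $\Delta_c\Phi$. Once $G\equiv 0$ is secured, $\Phi=\tilde\Phi\in\BF$, and part (b) in the converse setting reduces to the forward computation already performed.
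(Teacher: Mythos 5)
Your forward direction and part (b) are correct and essentially the paper's argument: substituting the representation (\ref{berep}) gives $\theta_{nc}\Phi(\lambda)=\int_{(0,\infty)}(1-e^{-ncx})(1-e^{-\lambda x})\,\mu(\dx)$, from which $\theta_c\Phi\in\BF_b^0$ and the limit statement follow by monotone convergence (the paper gets local uniformity and the derivatives from the closure properties of $\BF$, Corollary 3.9 of \cite{SSV}, rather than by differentiating under the integral, but both work). Your preliminary steps in the converse are also sound: the finiteness of $\nu_c$, the telescoping argument forcing $d\geq 0$ from $\Phi\geq 0$, and the construction of $\tilde\Phi$ with $\Delta_c\tilde\Phi=\Delta_c\Phi$.

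The genuine gap is exactly the step you flag as ``the main obstacle'' and then do not carry out: ruling out the $c$-periodic residue $G=\Phi-\tilde\Phi$. With the ingredients you list (measurability, non-negativity, smoothness of $\Delta_c\Phi$) this cannot be done, because the single-$c$ hypothesis does not exclude periodic perturbations at all. Take $\Phi(\lambda)=\lambda+\epsilon\sin^2(\pi\lambda/c)$: it is smooth, non-negative, $\Delta_c\Phi\equiv c$ is completely monotone, $\theta_c\Phi\equiv 0\in\BF_b^0$, yet $\Phi\notin\BF$ for $\epsilon>0$. So no argument proceeding from your setup can close the proof without further input. The paper's route is different at precisely this point: it reduces to Proposition \ref{proposition2}, i.e.\ to the iterative functional equation $f(\lambda+1)=g(\lambda)f(\lambda)$ with $g=e^{-\Delta\Phi}$, and invokes Webster's Theorem \ref{Webster}, whose \emph{uniqueness among log-convex solutions} together with the explicit product formula is what excludes periodic residues. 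That uniqueness, however, only identifies $e^{\Phi(c)-\Phi(\cdot)}$ with the Webster solution if $\Phi$ is concave --- automatic when $\Phi\in\CA$ as in Proposition \ref{proposition2}, but not supplied by $\theta_c\Phi\in\BF_b^0$ for a single $c$. To repair your argument you would need either to work with the ``for all $c>0$'' form of the hypothesis (which yields concavity of $\Phi$ on a dense set of step sizes and then kills $G$), or to add a convexity/monotonicity assumption that selects $\tilde\Phi$ among all solutions of $\Delta_c F=\Delta_c\Phi$.
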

\begin{remark} By (\ref{bcm}), point (a) is also equivalent to $\lambda \mapsto  \Phi(\lambda+c) -\Phi(\lambda)\in \CM[0,\infty)$, for some (and hence for all) $c>0$.
\end{remark}
\section{Classical characterization of completely monotone and alternating sequences and additional results}\label{toolsequence}

A characterization of completely monotone (respectively alternating) sequences, closely related to Hausdorff moment characterization theorem  \cite{haus},  could be found in the monograph of Berg et al. \cite{berg1}:
\begin{theorem}\cite[Propositions 6.11 and 6.12 p. 134]{berg1} Let $a={\left(a_k\right)}_{k\geq 0}$ a positive sequence. Then,  the following conditions are equivalent: \vspace{-0.2cm}
\begin{enumerate}[(a)]
\item the sequence $a$ is completely  monotone (respectively alternating);

\item for all $k \in \NN,\,  n\in \NN$ (respectively $n\geq 1$), we have
\begin{equation}(-1)^n \Delta^n a(k)   \geq 0 \quad\mbox{(respectively  $\leq 0$)};
\label{bergd} \end{equation}

\item there exists a positive Radon measure $\nu$ on $[0,1]$ (respectively $q\in \RR,\,d\geq 0$ and a positive Radon measure $\mu$ on $[0,1)$) such that we have the representation
\begin{eqnarray}
&&a_0=  \nu([0,1]) ,\quad   a_k=\int_{(0,1]} u^{k} \nu(\du),\;\; k\geq 1\label{bergu1}\\
&&\mbox{\it \Big(respectively} \;\;a_0=q,\quad   a_k=q +d\,k+\int_{[0,1)} (1-u^{k}) \, \mu(\du),  \;\;k\geq 1\Big). \label{bergu2}
\end{eqnarray}
\end{enumerate}\label{bergh}\end{theorem}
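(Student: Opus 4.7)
The plan is to establish $(a)\Leftrightarrow(b)\Leftrightarrow(c)$ for the completely monotone version first, and then to bootstrap the completely alternating version from it via first differences and Remark~\ref{r1}(iii). Note that $(a)\Rightarrow(b)$ is the tautological specialization of Definition~\ref{defi} to $c_1=\cdots=c_n=1$, so only three implications are genuinely nontrivial: $(b)\Rightarrow(a)$, $(c)\Rightarrow(b)$, and the deep direction $(b)\Rightarrow(c)$.

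For $(b)\Rightarrow(a)$, I would exploit the telescopic identity $\Delta_c f=\sum_{j=0}^{c-1}\tau_j\Delta f$, valid for every $c\in\No$, and iterate it to obtain
\[
\Delta_{c_1}\cdots\Delta_{c_n}=\sum_{j_1=0}^{c_1-1}\cdots\sum_{j_n=0}^{c_n-1}\tau_{j_1+\cdots+j_n}\,\Delta^n.
\]
Thus $(-1)^n\Delta_{c_1}\cdots\Delta_{c_n}a(k)$ is a non-negative linear combination of the quantities $(-1)^n\Delta^n a(k+j_1+\cdots+j_n)$, all of which are $\geq 0$ by (b); the completely alternating case uses the same identity with the opposite inequality and $n\geq 1$. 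The direction $(c)\Rightarrow(b)$ is a direct computation: plugging \eqref{bergu1} into \eqref{delta} and exchanging sum and integral yields
\[
(-1)^n\Delta^n a(k)=\int_{(0,1]}u^k(1-u)^n\,\nu(\du)\geq 0,
\]
and the analogous computation from \eqref{bergu2} produces the opposite sign in the alternating case.

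The core of the theorem, and the main obstacle, is $(b)\Rightarrow(c)$ for completely monotone sequences, i.e.\ the Hausdorff moment problem. My plan is to introduce the candidate discrete measures
\[
\nu_n:=\sum_{k=0}^n\binom{n}{k}(-1)^{n-k}\Delta^{n-k}a(k)\,\delta_{k/n}
\]
on the compact interval $[0,1]$; their atoms are non-negative precisely by (b). A short combinatorial identity---swap the order of summation, use $\binom{n}{k}\binom{n-k}{m-k}=\binom{n}{m}\binom{m}{k}$, and the fact that $\sum_{k=0}^m\binom{m}{k}(-1)^{m-k}=0$ for $m\geq 1$---collapses the total mass $\nu_n([0,1])$ to $a_0$, so the family $(\nu_n)$ is uniformly bounded. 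Helly's selection theorem then supplies a subsequential weak limit $\nu$. To identify the moments, I would use the Bernstein-polynomial comparison: for fixed $j$ and $n\geq j$, one computes $\int u^j\,\nu_n(\du)$ explicitly, and the uniform convergence on $[0,1]$ of the Bernstein polynomials $B_n(u^j)$ to the monomial $u^j$ forces $\int u^j\,\nu_n(\du)\to a_j$, hence $a_j=\int u^j\,\nu(\du)$. Density of polynomials in $C([0,1])$ gives uniqueness of $\nu$ and, a posteriori, weak convergence of the whole sequence $(\nu_n)$.

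The completely alternating case of $(b)\Rightarrow(c)$ is then a bootstrap via Remark~\ref{r1}(iii): if $a\in\CA(\NN)$, then $\Delta a\in\CM(\NN)$, and the CM representation just established provides a positive Radon measure $\nu_0$ on $[0,1]$ with $\Delta a(k)=\int_{[0,1]}u^k\,\nu_0(\du)$. Setting $q:=a_0$, $d:=\nu_0(\{1\})$ and $\mu(\du):=\nu_0(\du)/(1-u)$ on $[0,1)$---which is indeed Radon there, since $\mu([0,1-\varepsilon])\leq\nu_0([0,1])/\varepsilon$ for every $\varepsilon>0$---summing the telescopic relation $a_k=a_0+\sum_{j=0}^{k-1}\Delta a(j)$ produces exactly \eqref{bergu2}. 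Everything hinges on the Hausdorff moment step; the algebraic preliminaries and the alternating-to-monotone reduction are essentially bookkeeping around it.
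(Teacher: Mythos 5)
This is a statement the paper imports verbatim from Berg--Christensen--Ressel (Propositions 6.11 and 6.12) and never proves, so there is no internal argument to compare against; what you have written is a self-contained proof where the paper has none. Your route is the classical, concrete one: the reduction of general iterated differences to the unit-step differences via $\Delta_c=\sum_{j=0}^{c-1}\tau_j\Delta$ is correct and disposes of $(b)\Rightarrow(a)$; the computation $(-1)^n\Delta^n a(k)=\nu(\{0\})\II_{k=0}+\int_{(0,1]}u^k(1-u)^n\,\nu(\du)$ (you dropped the atom-at-zero term, which only appears for $k=0$ and does not affect the sign) gives $(c)\Rightarrow(b)$; and the Hausdorff core is handled by the standard Helly argument on the measures $\nu_n$. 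This is genuinely different from the source's proof, which derives the result from the abstract integral representation of completely monotone functions on the semigroup $(\NN,+)$ in terms of bounded semicharacters $k\mapsto u^k$; your version is more elementary and self-contained, at the cost of the combinatorial bookkeeping. The one step you should write out more carefully is the moment identification: the uniform convergence of Bernstein polynomials is the argument one uses when the representation is already known, whereas here you need the exact identity, valid for $n\geq j$,
\[
\sum_{k=j}^{n}\frac{\binom{k}{j}}{\binom{n}{j}}\,\binom{n}{k}(-1)^{n-k}\Delta^{n-k}a(k)\;=\;a_j,
\]
(proved by the same $\binom{n}{k}\binom{n-k}{m-k}=\binom{n}{m}\binom{m}{k}$ manipulation you use for the total mass, applied to the shifted sequence), together with the uniform estimate $\sup_{0\le k\le n}\bigl|(k/n)^j-\binom{k}{j}/\binom{n}{j}\bigr|\to 0$ and the uniform mass bound $\nu_n([0,1])=a_0$; this yields $\int u^j\,\nu_n(\du)\to a_j$ and hence $a_j=\int u^j\,\nu(\du)$ along the Helly subsequence. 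The bootstrap of the alternating case via $\Delta a\in\CM(\NN)$, with $d=\nu_0(\{1\})$ and $\mu(\du)=\nu_0(\du)/(1-u)$ on $[0,1)$, is correct and is exactly how the telescoped representation \eqref{bergu2} arises.
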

\subsection{Comments on $\CM(\NN)$ and $\CA(\NN)$}\label{comments}
$\,$

\underline{{\bf Comment 1}}: In the completely monotone case, the measure  $\nu$ in (\ref{bergu1}) is not only Radon but also finite because of the convention $a_0=\nu\left([0,1]\right)$. In the completely alternating case, we have that $a_0=q$ and the measure $\mu$ in (\ref{bergu2}) is only  Radon, satisfying the integrability  condition  $\int_{[0,1)} (1-u) \, \mu(\du)<\infty$. By the dominated convergence theorem, we retrieve $d=\lim_{k\to
\infty}\left(a_k/k\right).$ Furthermore, in both cases, $\nu$ (respectively $(q,d,\mu)$) uniquely determine the sequence ${\left(a_k\right)}_{k\geq 0}$, which is justified as follows:

1- In the completely monotone case: use Fubini argument, get that the exponential generating function of the sequence ${\left(a_k\right)}_{k\geq 0}$ is the Laplace transform of $\nu$,
$$ \sum_{k\geq 0} a_k\, \frac{(-t)^k}{k!}= \int_{[0,1]} e^{-tu}\, \nu(\du), \quad t\geq 0,$$
and finally conclude with the injectivity of the Laplace transform.

2- In the completely alternating case: making an integration by parts, write
$$a_k - q-dk - \mu \left(\{0\} \right) =k \int_0^1 u^{k-1}\, \mu\big((0,u]\big)  \du,\quad k\geq 1,$$
then by a Fubini argument, get that the exponential generating function of the sequence ${\left(a_k\right)}_{k\geq 0}$ leads to a Bernstein function build with the triplet $(q,\,d,\,\mu)$:
\begin{eqnarray}
h(t) :&=&\sum_{k\geq 0}  a_k \, \frac{t^{k}}{k!},\qquad t\geq 0\nonumber\\
&=& (q+d \,t)e^t+   \mu \left(\{0\} \right) (e^t-1)   +t \int_0^1 e^{tu}\, \mu\big((0,u]\big)  \du \nonumber\\
&=& (q+d \,t)e^t+ \mu \left(\{0\} \right) (e^t-1)   +  \int_{(0,1)} (e^{t} -e^{tv})\, \mu(\dv)  \nonumber\\
e^{-t}\,h(t) &=&q+  d \,t  +  \mu \left(\{0\} \right)(1 -e^{-t}) +   \int_{(0,1)} (1 -e^{-tw})\, \widehat{\mu} (\dw) \,\label{h}
\end{eqnarray}
where $\widehat{\mu}$ is the image of the measure $\mu$ obtained by the change of variable $w=1-v$, and finally conclude with the unicity through  the Bernstein representation in equality (\ref{h}).

\underline{{\bf Comment 2}}: Completely monotone sequences are always positive, whereas a completely alternating sequence is non-negative if and only if the corresponding $q$-value in (\ref{bergu2}) is non-negative
(see \cite{atha}).
\subsection{The classes $\CM^*(\NN)$ and $\CA^*(\NN)$ of minimal completely monotone and alternating sequences}\label{minima} A lot of care is required if one modifies some terms of  a completely monotone or alternating sequence. We clarify, with our own approach, the following fact we have found  in \cite{lorch1} and  \cite{lorch2}, and extend it to completely alternating sequences:  strict inequality prevails throughout (\ref{bergd}) for a completely monotone sequence unless $a_1=a_2 = \cdots= a_n = \cdots$, that is, unless all terms except possibly its first are identical. We can state that
\begin{quote}{\it A sequence  $a={\left(a_k\right)}_{k\geq 0}$ in $\CM(\NN)$ (respectively in $\CA(\NN)$) ceases to strictly alternate, in differences,  at a certain rank if and only if the sequence  $a$  is constant (respectively  if and only if if the sequence  $a$ is affine)}.
\end{quote}
Our argument uses the  explicit computation (\ref{delta}) of  the quantities $(-1)^n \Delta^n a(k), \, n\in \No,\,k\in \NN$, which does not seem  to be fully exploited in the literature we encountered:
$$(-1)^n \Delta^n a(k)=\left\{
\begin{array}{ll}
\;\; \,\mu(\{0\})\,\II_{k=0}+ \int_{(0,1)}u^k(1-u)^n \nu(\du)   & {\rm if }\; a \in \CM(\NN) \\
-\mu(\{0\})\,\II_{k=0}- \int_{(0,1)}u^k(1-u)^n \mu(\du)& {\rm if }\; a \in \CA(\NN).
\end{array} \right.$$
Let $\alpha=\nu$ or $\mu$. Based on the fact that $\int_{(0,1)}u^k(1-u)^n \alpha(\du) =0$,   for some $n\in \No$ and
$k\in \NN$, if and only if $\alpha\big((0,1)\big)=0$, then an elementary reasoning shows that
$$(-1)^n \Delta^n a(k)= 0 \;\mbox{for some} \,n\in \No,\,k\in \NN \Longleftrightarrow \left\{ \begin{array}{lll}
a_m =\mu(\{1\}),& \forall m\geq 1  & {\rm if }\; a \in \CM(\NN) \\
a_m =q+\mu(\{0\})+d\,m,&\forall m\geq 1 & {\rm if }\; a \in \CA(\NN).
\end{array} \right.$$

As an example, fix $\epsilon>0$ and consider the completely monotone (respectively alternating) sequence $b_0=\epsilon, \, b_k=0,\,k\geq 1$ (respectively $b_0=0, \, b_k=\epsilon,\,k\geq 1$). It satisfies:
$$(-1)^n \Delta^n b(k)= \epsilon  \,\II_{k=0}\; (\mbox{respectively}\;-\epsilon \,\II_{k=0}),\quad   n\in \No,\, k \in \NN.$$
By linearity of the operators $(-1)^n\Delta^n$, we obviously have
$$(-1)^n \Delta^n (a-b)(k)=\left\{
\begin{array}{ll}
\;\; \,(\mu(\{0\})-\epsilon)\,\II_{k=0}+ \int_{(0,1)}u^k(1-u)^n \nu(\du)   & {\rm if }\; a \in \CM(\NN) \\
\big(\epsilon-\mu(\{0\})\big)\,\II_{k=0}- \int_{(0,1)}u^k(1-u)^n \mu(\du)& {\rm if }\; a \in \CA(\NN).
\end{array} \right.$$
Since $\nu$ is finite (respectively $\mu$ integrates $1-u$), then the dominated convergence theorem ensures  that
$$\lim_{n\to \infty}\int_{(0,1)} (1-u)^n \nu(\du)=\lim_{n\to \infty}\int_{(0,1)} (1-u)^n \mu(\du)=0,$$
so that the quantities $(-1)^n \Delta^n (a-b)(0)$ takes the sign of $\mu(\{0\})-\epsilon$ when $n$ is big enough. The above discussion clarifies the concept of minimality initially introduced,  with a different approach,  in the monograph of Widder \cite{widder}:
\begin{definition}\cite[Widder, p. 163]{widder} and \cite[Athavale-Ranjekar]{atha}: Let $a={\left(a_k\right)}_{k\geq 0}$ a completely monotone
(respectively alternating) sequence. \vspace{-0.2cm}
\begin{enumerate}[(i)]
\item $a$ is called minimal and we denote $a \in \CM^*(\NN)$ (respectively $a\in \CA^*(\NN)$) if the sequence
$$\mbox{$\{a_0-\epsilon, a_1,\cdots,a_k,\cdots\}\quad$ (respectively $\;\,\{a_0, a_1-\epsilon,\cdots,a_k-\epsilon,\cdots\}$)}$$ is not completely monotone (respectively alternating) for any positive $\epsilon$.

\item Equivalently,  $a$ is minimal if and only if the measure $\nu$ in (\ref{bergu1}) (respectively $\mu$ in (\ref{bergu2})) has no point mass at zero.
\end{enumerate} \label{WA}\end{definition}
\begin{example} The sequence $a={\left((k+1)^{-1}\right)}_{k\geq 0}$ ceases to be completely monotone if $a_0=1$ is replaced by $a_0=1-\epsilon$, since
$$(-1)^n \Delta^na(0)=\frac{1}{n+1}-\epsilon, \quad n\in \NN.$$
The analogous constatation holds for the completely alternating sequence ${\left(1-(k+1)^{-1}\right)}_{k\geq 0}$ accordingly to Definition \ref{WA}.
\end{example}

After the above comments and considerations on minimal sequences, Theorem \ref{bergh} could be specified as follows: taking $\widetilde{\nu}$ and  $\widetilde{\mu}$ obtained as the image of the measures $\nu$ and $\mu$ on $(0,\infty)$ in (\ref{bergu1}) and (\ref{bergu2}) through  the obvious change of variable $u=e^{-x}$, we have:

\begin{theorem}\vspace{-0.2cm}

\begin{enumerate}[(a)]
\item \cite[Theorem 14b, p. 14]{widder}  and \cite[Theorem 1]{atha}  A positive  sequence $a={\left(a_k\right)}_{k\geq 0}$ is
obtained by interpolating a member of  $\CM[0,\infty)$ (respectively $\BF$) on $\NN$ if and only if $a$ belongs to $\CM^*(\NN)$
(respectively $\CA^*(\NN)$).

\item Equivalently, a sequence ${\left(a_k\right)}_{k\geq 0}$ belongs to $\CM^*(\NN)$ (respectively belongs to $\CA^*(\NN)$ and positive) if and only if there exist a unique finite measure $\widetilde{\nu}$ on $(0,\infty)$ (respectively a unique  triplet $(q,d,\widetilde{\mu})$ where $q,\,d \geq 0$ and the measure $\widetilde{\mu}$ satisfying $\int_{(0,\infty)} (1\wedge u) \, \widetilde{\mu}(\du)<\infty$), such that:
\begin{equation}
a_k=\int_{[0,\infty)} e^{-ku} \widetilde{\nu}(\du)\quad \mbox{\it \Big(resp.} \;\; a_k=q+d k+\int_{(0,\infty)} (1-e^{-ku}) \, \widetilde{\mu}(\du) \Big),\quad k\geq 0. \label{refor}
\end{equation}
\end{enumerate} \label{theorem3}\end{theorem}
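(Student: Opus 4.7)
The plan is to derive Theorem \ref{theorem3} as a direct reformulation of Theorem \ref{bergh} combined with the minimality criterion in Definition \ref{WA}(ii), via the change of variable $u=e^{-x}$, and then to deduce part (a) from (b) by invoking Theorem \ref{comf} in the completely monotone case and the Bernstein representation (\ref{berep}) in the alternating case.

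I would start with (b). Given $a\in\CM^*(\NN)$, Theorem \ref{bergh}(c) furnishes a finite Radon measure $\nu$ on $[0,1]$ with $a_0=\nu([0,1])$ and $a_k=\int_{(0,1]}u^k\,\nu(\du)$ for $k\geq 1$. By Definition \ref{WA}(ii), minimality means $\nu(\{0\})=0$, so the same integral formula now holds for $k=0$ as well. Transporting $\nu$ by $u\mapsto -\ln u$ produces a finite measure $\widetilde{\nu}$ on $[0,\infty)$ for which the first identity in (\ref{refor}) holds. The completely alternating case is parallel: Theorem \ref{bergh}(c) delivers a triplet $(q,d,\mu)$ with $\mu$ on $[0,1)$, minimality again forces $\mu(\{0\})=0$, and the pushforward of $\mu$ by $u\mapsto -\ln u$ is a measure $\widetilde{\mu}$ on $(0,\infty)$. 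The integrability constraint $\int_{[0,1)}(1-u)\,\mu(\du)<\infty$ transforms into $\int_{(0,\infty)}(1-e^{-x})\,\widetilde{\mu}(\dx)<\infty$, which is equivalent to $\int_{(0,\infty)}(1\wedge x)\,\widetilde{\mu}(\dx)<\infty$ because $1-e^{-x}\sim x$ near the origin and tends to $1$ at infinity. Uniqueness of $\widetilde{\nu}$ follows from the injectivity of the Laplace transform already noted in Section \ref{comments}; in the alternating case one reads off $q=a_0$ and $d=\lim_{k\to\infty}a_k/k$ directly from the representation (dominated convergence), and $\widetilde{\mu}$ is then uniquely determined through the Bernstein-type identity (\ref{h}).

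For (a), I would conclude the equivalence via (b). If $a\in\CM^*(\NN)$, set $f(\lambda):=\int_{[0,\infty)}e^{-\lambda u}\,\widetilde{\nu}(\du)$ with $\widetilde{\nu}$ supplied by (b); since $\widetilde{\nu}$ is finite, Theorem \ref{comf} places $f$ in $\CM[0,\infty)$, and by construction $f(k)=a_k$. Conversely, if $f\in\CM[0,\infty)$ interpolates $a$, Theorem \ref{comf} provides a finite representing measure $\widetilde{\nu}$ with $a_k=f(k)=\int e^{-ku}\,\widetilde{\nu}(\du)$, so (b) yields $a\in\CM^*(\NN)$. The Bernstein case is symmetric: from (b) assemble $\Phi(\lambda)=q+d\lambda+\int_{(0,\infty)}(1-e^{-\lambda u})\,\widetilde{\mu}(\du)\in\BF$ using (\ref{berep}); conversely, extract the triplet $(q,d,\widetilde{\mu})$ of an interpolating Bernstein function from its canonical representation and feed the triplet back into (b). There is no genuinely hard step here; the care needed is in the bookkeeping of how the endpoints behave under $x=-\ln u$ (the point $u=1$ is sent to $x=0$, while $u=0$, ruled out by minimality, would correspond to $x=+\infty$) and in translating the $1-u$ integrability window for $\mu$ into the $1\wedge x$ window for $\widetilde{\mu}$ at both ends of $(0,\infty)$.
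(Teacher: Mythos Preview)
Your proposal is correct and matches the paper's approach exactly: the paper presents Theorem~\ref{theorem3} as a direct specification of Theorem~\ref{bergh} via the change of variable $u=e^{-x}$ applied to the representing measures, with minimality (Definition~\ref{WA}(ii)) removing the mass at $0$, and part~(a) attributed to Widder and Athavale--Ranjekar. Your write-up simply spells out the bookkeeping (endpoint behavior under $x=-\ln u$, translation of the integrability condition, recovery of $q$ and $d$) that the paper leaves implicit in its one-line introduction to the theorem.
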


It is clear that the subclass  $\CM^*(\NN)$ and the subclass of positive  sequences in $\CA^*(\NN)$ are convex cones.
\section{Linking functions and sequences of the completely and alternating type}\label{main}

In the spirit of Theorems \ref{bergh} and  \ref{theorem3}, a natural question is to ask whether the completely monotone (respectively Bernstein)  character of  function   $f$ is entirely recognized via its associated sequence ${\left(f(k)\right)}_{k}$. This constitutes a kind converse
of Hausdorff's moment characterization theorem \cite{haus} which is formulated in Theorem \ref{bergh} or \ref{theorem3}. A complete answer is given in the following two subsections.
\subsection{Complete  monotonicity property of functions is recognized by their restriction on $\NN$}
\begin{theorem} Let  $\Psi :[0,\infty)\longrightarrow [0,\infty)$ be a bounded  function.  Then, $\Psi$ is completely monotone if and
only if the two following  conditions hold:\vspace{-0.2cm}
\begin{enumerate}[(a)]
\item the function $\Psi$ has an holomorphic extension on $Re(z)>0$
and remains bounded there;

\item the sequence $\big(\Psi(k)\big)_{k\geq 0}$ is completely monotone
and minimal.
\end{enumerate}\label{theorem4}\end{theorem}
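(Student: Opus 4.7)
The plan is to establish the equivalence in two parts, with the converse direction carrying essentially all the weight.

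\textbf{Necessity.} Assume $\Psi\in\CM[0,\infty)$ is bounded. Theorem \ref{comf} supplies a finite measure $\nu$ on $[0,\infty)$ with $\Psi(\lambda)=\int_{[0,\infty)}e^{-\lambda x}\,\nu(\dx)$, and the same integral formula makes sense and defines a holomorphic function on $\{Re(z)>0\}$, dominated there by $\nu([0,\infty))=\Psi(0)$; this is (a). The push-forward $\widetilde{\nu}$ of $\nu$ under $x\mapsto e^{-x}$ is a finite measure supported in $(0,1]$, so $\Psi(k)=\int_{(0,1]}u^k\,\widetilde{\nu}(\du)$ exhibits $(\Psi(k))_{k\geq 0}$ as a completely monotone sequence whose representing measure gives no mass to $0$, i.e.\ minimal in the sense of Definition \ref{WA}. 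This gives (b).

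\textbf{Sufficiency.} Assume (a) and (b). By Theorem \ref{theorem3} applied to the minimal completely monotone sequence $(\Psi(k))_{k\geq 0}$, there exists a unique finite measure $\widetilde{\nu}$ on $(0,\infty)$ with $\Psi(k)=\int_{(0,\infty)} e^{-ku}\,\widetilde{\nu}(\du)$ for all $k\in\NN$. Define the candidate
$$G(z):=\int_{(0,\infty)} e^{-zu}\,\widetilde{\nu}(\du),\qquad Re(z)>0.$$
Then $G$ is holomorphic and bounded on $\{Re(z)>0\}$, extends continuously to $Re(z)\geq 0$ with $G(0)=\widetilde{\nu}((0,\infty))=\Psi(0)$, and its restriction to $(0,\infty)$ is already completely monotone. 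Thus it remains to prove that the difference $F:=\Psi-G$, which is bounded and holomorphic on the right half-plane and vanishes on $\NN$, is identically zero.

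\textbf{Conformal transfer and Blaschke.} I transport $F$ to the open unit disk $\mathbb{D}$ by the Cayley map $z=(1+w)/(1-w)$, setting $\widetilde{F}(w):=F\bigl((1+w)/(1-w)\bigr)$. This $\widetilde{F}$ is bounded and holomorphic on $\mathbb{D}$ and vanishes at $w_k=(k-1)/(k+1)\in\mathbb{D}$ for every $k\in\No$ (the case $k=0$ maps to the boundary point $w=-1$ and is irrelevant). Since $|w_k|=(k-1)/(k+1)$ for $k\geq 1$, we have $1-|w_k|=2/(k+1)$, hence $\sum_{k\geq 1}(1-|w_k|)=+\infty$. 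Blaschke's theorem for bounded holomorphic functions on $\mathbb{D}$ then forces $\widetilde{F}\equiv 0$, hence $F\equiv 0$ on $\{Re(z)>0\}$, and by continuity $\Psi=G$ on $[0,\infty)$. Since $G$ is a Laplace transform of a finite measure on $(0,\infty)$, this exhibits $\Psi\in\CM[0,\infty)$.

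\textbf{Main difficulty.} The real content lies in sufficiency: upgrading the equality $\Psi=G$ from the discrete set $\NN$ to the whole half-line. Hypothesis (a) is precisely what allows this upgrade, and the boundedness hypothesis is what makes Blaschke applicable (without it one cannot conclude from accumulation of zeros alone). I expect this to be the most delicate step, and also the reason why the unbounded case, treated separately in the paper, requires an additional shifting device before reducing to the bounded setting above. The second proof route, indicated by the authors, replaces Blaschke by the Gregory--Newton expansion $\Psi(z)=\sum_{n\geq 0}\binom{z}{n}\Delta^n\Psi(0)$; there the corresponding obstacle is to justify convergence of this series on the right half-plane from boundedness and minimality alone.
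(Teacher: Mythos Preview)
Your proof is correct and follows essentially the same route as the paper's: build the candidate Laplace transform $G$ from the minimal completely monotone sequence via Theorem~\ref{theorem3}, then use Blaschke's theorem on the bounded holomorphic difference $\Psi-G$ vanishing on $\No$ to force equality. The only cosmetic difference is that you unpack the conformal transfer to the disk explicitly, whereas the paper packages this step as Corollary~\ref{blas}; your closing remark on the Gregory--Newton alternative also matches the paper's second proof. One minor slip: the representing measure $\widetilde{\nu}$ should live on $[0,\infty)$ rather than $(0,\infty)$ (constant sequences need a Dirac mass at $0$), but this does not affect the argument.
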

\begin{corollary}
A function $\Psi: (0,\infty)\longrightarrow \RP$  is completely monotone if and only if the following two conditions hold: for some (and hence all)  positive sequence ${(\epsilon_n)}_{n\geq 0}$ such that  $\epsilon_n\to 0$,
\vspace{-0.2cm}
\begin{enumerate}[(i)]
\item the function $\Psi$  has a holomorphic extension on $Re(z)>0$
and remains bounded on $Re(z)>\epsilon_n$;

\item the sequence ${\big(\tau_{\epsilon_n}\Psi(k)\big)}_{k\geq 0}={\big(\Psi(\epsilon_n +k)\big)}_{k\geq 0}$ completely monotone
and minimal.
\end{enumerate} \label{corollary1}\end{corollary}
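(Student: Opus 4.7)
The plan is to reduce Corollary \ref{corollary1} to the bounded case handled in Theorem \ref{theorem4} by looking at the shifted functions $\Psi_n := \tau_{\epsilon_n}\Psi$, and then to return to $\Psi$ itself via Remark \ref{clos}(ii), which asserts that $\Psi\in\CM(0,\infty)$ if and only if $\tau_{c_n}\Psi\in\CM[0,\infty)$ for some positive sequence $c_n\to 0$. This reduction is natural because the hypotheses (i) and (ii) have been phrased precisely so as to transfer to $\Psi_n$ the boundedness, holomorphy, and minimal-CM-sequence conditions required by Theorem \ref{theorem4}.

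For the necessity direction, assume $\Psi\in\CM(0,\infty)$. Theorem \ref{comf} gives a Radon measure $\nu$ on $[0,\infty)$ with $\Psi(\lambda)=\int_{[0,\infty)}e^{-\lambda x}\,\nu(\dx)$; the formula $\Psi(z)=\int_{[0,\infty)}e^{-zx}\,\nu(\dx)$ defines a holomorphic extension to $\{\mathrm{Re}(z)>0\}$, since for $\mathrm{Re}(z)\geq \epsilon_n$ we have $|e^{-zx}|\leq e^{-\epsilon_n x}$ and $\int e^{-\epsilon_n x}\nu(\dx)=\Psi(\epsilon_n)<\infty$, from which we also extract the bound $|\Psi(z)|\leq\Psi(\epsilon_n)$ on that half-plane. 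This yields (i). For (ii), observe that $\Psi_n=\tau_{\epsilon_n}\Psi$ is bounded and belongs to $\CM[0,\infty)$ (its representative measure is $e^{-\epsilon_n x}\nu(\dx)$, which is finite), so by Theorem \ref{theorem3}(a) its restriction $(\Psi_n(k))_{k\geq 0}=(\Psi(\epsilon_n+k))_{k\geq 0}$ lies in $\CM^*(\NN)$.

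For sufficiency, fix $n$ and verify that $\Psi_n=\tau_{\epsilon_n}\Psi$ satisfies the hypotheses of Theorem \ref{theorem4}. First, by (i), $\Psi$ is bounded on $\{\mathrm{Re}(z)>\epsilon_n\}$, so $\Psi_n$ is bounded on $[0,\infty)$ and, via $z\mapsto \Psi(z+\epsilon_n)$, extends holomorphically and boundedly to $\{\mathrm{Re}(z)>0\}$ (indeed even to $\{\mathrm{Re}(z)>-\epsilon_n\}$). Second, by (ii), $(\Psi_n(k))_{k\geq 0}$ is in $\CM^*(\NN)$. Hence Theorem \ref{theorem4} applies and gives $\Psi_n\in\CM[0,\infty)$. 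Since $\epsilon_n\to 0^+$, Remark \ref{clos}(ii) yields $\Psi\in\CM(0,\infty)$. Finally, the equivalence "for some" $\iff$ "for all" such sequences is immediate: if the conclusion $\Psi\in\CM(0,\infty)$ holds, then by the necessity direction (i) and (ii) are valid for every positive sequence tending to zero.

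The only substantive obstacle is already absorbed into Theorem \ref{theorem4}; once that bounded result is available, the whole argument here amounts to transferring the hypotheses along the shift $\tau_{\epsilon_n}$ and invoking Remark \ref{clos}(ii). The reason condition (i) is stated as boundedness on the shifted half-planes $\{\mathrm{Re}(z)>\epsilon_n\}$ (rather than global boundedness on $\{\mathrm{Re}(z)>0\}$) is precisely to accommodate unbounded $\Psi$ like $\lambda\mapsto 1/\lambda$, whose shifts are bounded even though $\Psi$ itself is not.
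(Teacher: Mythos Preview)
Your proof is correct and follows essentially the same approach as the paper: for necessity you invoke the Laplace representation and Theorem~\ref{theorem3} (the paper simply refers back to the proof of Theorem~\ref{theorem4}), and for sufficiency you verify that each shifted function $\tau_{\epsilon_n}\Psi$ satisfies the hypotheses of Theorem~\ref{theorem4} and then conclude via Remark~\ref{clos}(ii), exactly as the paper does. Your write-up is more detailed (in particular the explicit verification of the bound $|\Psi(z)|\leq\Psi(\epsilon_n)$ and the ``for some $\Leftrightarrow$ for all'' remark), but the underlying argument is the same.
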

\begin{corollary}
Two  completely monotone functions on $(0,\infty)$ coincide on the set of positive integers starting from a certain  rank if and only if they are equal. If one of them extends to $[0,\infty)$, then so does the other and they coincide on  $[0,\infty)$.
\label{corollary2}\end{corollary}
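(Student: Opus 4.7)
The plan is to reduce everything to the classical Hausdorff uniqueness theorem by shifting the two functions far enough to the right that they become Laplace transforms of finite measures, and then to propagate equality from $[N,\infty)$ back to $(0,\infty)$ by the identity theorem for holomorphic functions. The $(\Leftarrow)$ direction is trivial, so I focus on $(\Rightarrow)$. Assume $\Psi_1(k)=\Psi_2(k)$ for every $k\geq N$, with $N\in\No$.

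First, I would invoke Theorem \ref{comf} to write $\Psi_i(\lambda)=\int_{[0,\infty)} e^{-\lambda x}\,\nu_i(\dx)$ for Radon measures $\nu_i$ on $[0,\infty)$. Since $\Psi_i(N)<\infty$, the measure $\rho_i(\dx):=e^{-Nx}\nu_i(\dx)$ is \emph{finite}, and $g_i(\lambda):=\Psi_i(\lambda+N)$ is its Laplace transform. Pushing $\rho_i$ forward under $x\mapsto u=e^{-x}$ yields a finite measure $\mu_i$ on $[0,1]$ for which
\begin{equation*}
g_i(k)=\int_{[0,1]} u^k\,\mu_i(\du),\qquad k\in\NN.
\end{equation*}
By assumption $g_1(k)=g_2(k)$ for all $k\in\NN$, hence $\mu_1$ and $\mu_2$ share all their moments. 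The Hausdorff moment problem for finite measures on $[0,1]$ (uniqueness part) then forces $\mu_1=\mu_2$, whence $g_1\equiv g_2$ on $[0,\infty)$, i.e.\ $\Psi_1\equiv\Psi_2$ on $[N,\infty)$.

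Second, I would upgrade this to equality on $(0,\infty)$ via holomorphy. For every $\epsilon>0$ the integral $\int e^{-\epsilon x}\nu_i(\dx)=\Psi_i(\epsilon)$ is finite, so the dominated convergence theorem shows that $z\mapsto \int_{[0,\infty)} e^{-zx}\nu_i(\dx)$ is holomorphic on the right half-plane $\{Re(z)>0\}$ and agrees with $\Psi_i$ on $(0,\infty)$. Both holomorphic extensions coincide on the real half-line $[N,\infty)$, which has accumulation points in the connected domain $\{Re(z)>0\}$, so the identity theorem yields $\Psi_1\equiv\Psi_2$ throughout $\{Re(z)>0\}$, in particular on $(0,\infty)$.

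Finally, for the statement about $[0,\infty)$: once $\Psi_1\equiv\Psi_2$ on $(0,\infty)$, the injectivity part of Theorem \ref{comf} gives $\nu_1=\nu_2$. If one of the $\Psi_i$ extends to a function in $\CM[0,\infty)$, then by Theorem \ref{comf} the corresponding $\nu_i$ is finite; but $\nu_1=\nu_2$ forces the other to be finite too, so the other function also extends continuously at $0$ with $\Psi_1(0)=\nu_1([0,\infty))=\nu_2([0,\infty))=\Psi_2(0)$ by monotone convergence. The only subtle point is ensuring that the shifted objects $g_i$ are genuinely Laplace transforms of \emph{finite} measures on $[0,1]$, which is exactly why the shift by $N\geq 1$ (rather than working directly with $\Psi_i$, whose $\nu_i$ may be infinite) is essential; this is the only place where the ``starting from a certain rank'' hypothesis really plays a role.
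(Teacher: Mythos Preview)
Your proof is correct and shares the paper's overall architecture --- shift by the starting rank to land in $\CM[0,\infty)$, identify the shifted objects, then propagate back --- but the identification step uses a different tool. The paper applies the machinery it has just developed: it invokes Blaschke's theorem (Corollary~\ref{blas}) or equivalently the Gregory-Newton expansion of Proposition~\ref{holom} to show that the bounded holomorphic extensions of $\tau_{n_0}\Psi_1$ and $\tau_{n_0}\Psi_2$ coincide, and only \emph{then} deduces $e^{-n_0 x}\nu_1(\dx)=e^{-n_0 x}\nu_2(\dx)$ by Laplace injectivity. You bypass all of that and go straight through Hausdorff moment uniqueness on $[0,1]$, which is more elementary and requires no complex-analytic input beyond the standard representation.

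One remark: your second step (the identity theorem) is redundant. Once Hausdorff uniqueness gives $\mu_1=\mu_2$, pulling back through the bijection $u=e^{-x}$ yields $\rho_1=\rho_2$, i.e.\ $e^{-Nx}\nu_1(\dx)=e^{-Nx}\nu_2(\dx)$; since $e^{-Nx}>0$ this already forces $\nu_1=\nu_2$ on $[0,\infty)$, hence $\Psi_1\equiv\Psi_2$ on all of $(0,\infty)$ directly from Theorem~\ref{comf}(b). This is precisely how the paper concludes, and it makes your detour through analytic continuation from $[N,\infty)$ unnecessary --- you have in fact proved more at the end of your first step than you use.
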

\subsection{Complete  monotonicity property of functions is recognized by their restriction on a lattices of the form $\alpha_n \NN$, where $\alpha_n\to 0$}
The following two results characterize complete monotonicity of functions  only  in terms of minimal completely monotone sequences,
i.e. condition (a) in Theorem \ref{theorem4} and Corollary \ref{corollary1} would be self contained.
\begin{proposition}A function $\Psi : [0,\infty) \to [0,\infty)$ belongs to $\CM[0,\infty)$ if and only if it is continuous and for some (and hence for all)  sequence ${\left(\alpha_n\right)}_{n\geq 0}$ of positive numbers tending to zero,  there corresponds a sequence ${\left(\Psi_n\right)}_{n\geq 0}$ in $\CM[0,\infty)$ such that the following representation holds each for each $n\in \NN$:
$$\Psi(\alpha_n\, k)=\Psi_n (k), \quad \mbox{for all}\;\,  k\in\NN\qquad \big(\mbox{\it i.e.}\; {\big(\Psi\big(\alpha_n k) \big)}_{k\geq 0} \in \CM^*(\NN)\big).$$
\label{proposition4}\end{proposition}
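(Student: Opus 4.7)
The forward implication is immediate. If $\Psi \in \CM[0,\infty)$, the Laplace representation given by Theorem \ref{comf} makes $\Psi$ continuous, and for every $\alpha>0$ the function $\sigma_\alpha \Psi$ remains in $\CM[0,\infty)$ by Remark \ref{clos}(ii) while interpolating the sequence $\bigl(\Psi(\alpha k)\bigr)_{k\geq 0}$ on $\NN$; Theorem \ref{theorem3}(a) then places that sequence in $\CM^*(\NN)$. The choice $\Psi_n := \sigma_{\alpha_n}\Psi$ handles every positive sequence $\alpha_n\to 0$, accounting for the ``hence for all'' clause.

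For the converse, the plan is a squeeze between rescaled CM interpolants. Assume $\Psi$ is continuous on $[0,\infty)$ and that, for some positive sequence $\alpha_n\to 0$, each sequence $\bigl(\Psi(\alpha_n k)\bigr)_{k\geq 0}$ belongs to $\CM^*(\NN)$. Theorem \ref{theorem3}(b) yields, for each $n$, a finite Radon measure $\widetilde{\nu}_n$ on $[0,\infty)$ with
$$\Psi(\alpha_n k) \;=\; \int_{[0,\infty)} e^{-ku}\,\widetilde{\nu}_n(\du), \qquad k\in \NN.$$
I would then set $F_n(x) := \int_{[0,\infty)} e^{-xu/\alpha_n}\,\widetilde{\nu}_n(\du)$ for $x\geq 0$. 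This is the Laplace transform of the pushforward of $\widetilde{\nu}_n$ under $u\mapsto u/\alpha_n$, which is still a finite measure, so Theorem \ref{comf} puts $F_n\in \CM[0,\infty)$, and by construction $F_n(\alpha_n k) = \Psi(\alpha_n k)$ for every $k\in \NN$.

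The key step is the pointwise convergence $F_n \to \Psi$ on $[0,\infty)$. For a fixed $x\geq 0$, let $k_n := \lfloor x/\alpha_n \rfloor$, so that $\alpha_n k_n \leq x \leq \alpha_n(k_n+1)$ and both bounds tend to $x$. Since $F_n$ is completely monotone, it is non-increasing, whence
$$\Psi\bigl(\alpha_n(k_n+1)\bigr) \;=\; F_n\bigl(\alpha_n(k_n+1)\bigr) \;\leq\; F_n(x) \;\leq\; F_n(\alpha_n k_n) \;=\; \Psi(\alpha_n k_n).$$
Continuity of $\Psi$ at $x$ drives both bounds to $\Psi(x)$, hence $F_n(x)\to \Psi(x)$. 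The closure of $\CM[0,\infty)$ under pointwise convergence (Remark \ref{clos}(ii)) then delivers $\Psi\in \CM[0,\infty)$, after which the ``hence for all'' assertion is inherited from the forward direction already treated.

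The decisive technical hinge is the monotonicity-plus-continuity squeeze in the third paragraph: it is what lifts agreement on the vanishing lattices $\alpha_n\NN$ to the full half-line, and it is precisely the continuity of $\Psi$ that forces the CM bounds to collapse at non-lattice points. I do not anticipate a further obstacle, since the construction of $F_n$ by rescaling the Laplace representation is direct and the closure of $\CM[0,\infty)$ under pointwise limits is already in hand.
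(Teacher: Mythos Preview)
Your proof is correct and takes a genuinely different route from the paper's.  The paper, after constructing the same rescaled Laplace representation (your $F_n$ is exactly its $\int e_n(\lambda,\cdot)\,\widetilde{\nu}_n$ viewed as a function of $\lambda$), proceeds by a measure-level argument: it proves a uniform estimate $|e_n(\lambda,u)-e^{-\lambda u}|\leq \alpha_n/(\lambda-\alpha_n)$, invokes Helly's selection theorem to extract a vaguely convergent subsequence $\widetilde{\nu}_{n_p}\to\nu$, and then combines the two to obtain $\Psi(\lambda)=\int e^{-\lambda u}\,\nu(\du)$ directly.  You instead stay at the function level and exploit the monotonicity of each $F_n$ for a two-sided squeeze against $\Psi$ on the lattice, then appeal to the closure of $\CM[0,\infty)$ under pointwise limits.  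Your argument is shorter and avoids both the explicit uniform-convergence estimate and the subsequence extraction (the full sequence $F_n(x)$ converges, not merely a subsequence); the paper's argument, on the other hand, produces the representing measure $\nu$ explicitly.  One could note that the closure statement you cite from Remark~\ref{clos}(ii) ultimately rests on the same Helly-type compactness, so the saving is organizational rather than foundational---but within the paper's framework, where that closure is already recorded as a black box, your proof is the more economical of the two.
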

For non-bounded completely monotone functions on $(0,\infty)$ an analogous statement is given, but we require a minor correction consisting  on shifting the function on the right of zero:
\begin{corollary} For a  function $\Psi : (0,\infty) \to [0,\infty)$, the following conditions are equivalent: \vspace{-0.2cm}
\begin{enumerate}[\;(a)]
\item  $\Psi$ belongs to $\CM(0,\infty)$;

\item $\Psi$ is continuous\ and to every  sequence ${\left(r_n\right)}_{n\geq 0}$ of positive rational numbers tending to zero, there corresponds a  sequence ${\left(\Psi_n\right)}_{n\geq0}$ in $\CM[0,\infty)$,   such that following representation holds for each $n\in \NN$:
$$\Psi(r_n(k+1))=\Psi_n (k), \quad \mbox{for all}\;\, k\in\NN \qquad \big(\mbox{\it i.e.}\; {\big(\Psi\big(r_n(k+1)) \big)}_{k\geq 0} \in \CM^*(\NN)\big);$$

\item   $\Psi$ is continuous and there exists a sequence ${\left(\Psi_n\right)}_{n >0}$  in
$\CM[0,\infty)$,   such that the following representation holds for
each $n\in \No$:
$$\Psi(\frac{k+1}{n})=\Psi_n (k), \quad \mbox{for all}\;\, k\in\NN \qquad \big(\mbox{\it i.e.}\; {\big(\Psi\big(\frac{k+1}{n}) \big)}_{k\geq 0} \in \CM^*(\NN)\big).$$
\end{enumerate} \label{corollary3}\end{corollary}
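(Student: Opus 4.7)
The plan is to prove the cycle of implications $(a)\Rightarrow(b)\Rightarrow(c)\Rightarrow(a)$. The non-trivial content is $(c)\Rightarrow(a)$, which I would derive by reducing to Proposition \ref{proposition4} after a shift.

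\textbf{Step 1: $(a)\Rightarrow(b)$.} Assume $\Psi\in\CM(0,\infty)$. By Theorem \ref{comf}, $\Psi$ is continuous (indeed $C^\infty$) on $(0,\infty)$. Fix any positive rational sequence $r_n\downarrow 0$. For each $n$, Remark \ref{clos}(ii) gives $\tau_{r_n}\Psi\in\CM[0,\infty)$, and scaling by $r_n>0$ preserves this class, so the function
$$\Psi_n(x):=\Psi(r_n(x+1))=\sigma_{r_n}(\tau_{r_n}\Psi)(x)$$
lies in $\CM[0,\infty)$. By Theorem \ref{theorem3}(a), its restriction $(\Psi_n(k))_{k\ge 0}=(\Psi(r_n(k+1)))_{k\ge 0}$ belongs to $\CM^*(\NN)$.

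\textbf{Step 2: $(b)\Rightarrow(c)$.} Specialize the rational sequence to $r_n=1/n$.

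\textbf{Step 3: $(c)\Rightarrow(a)$.} This is the main step. Fix $n\in\No$ and define the shifted function
$$F_n(x):=\tau_{1/n}\Psi(x)=\Psi\bigl(x+\tfrac{1}{n}\bigr),\qquad x\ge 0.$$
Since $\Psi$ is continuous on $(0,\infty)$ and valued in $[0,\infty)$, $F_n$ is continuous on $[0,\infty)$ and maps into $[0,\infty)$. I would now check the hypothesis of Proposition \ref{proposition4} for $F_n$ using the sequence $\alpha_m:=1/(nm)\to 0$. For every $m\in\No$ and $k\in\NN$,
$$F_n(\alpha_m k)=\Psi\!\left(\frac{k}{nm}+\frac{1}{n}\right)=\Psi\!\left(\frac{k+m}{nm}\right)=\Psi_{nm}(k+m-1)=\tau_{m-1}\Psi_{nm}(k),$$
where $\Psi_{nm}\in\CM[0,\infty)$ is the function supplied by hypothesis (c) (note the use of $r_{nm}=1/(nm)$ and the shift $k\mapsto k+m-1$ absorbs the ``$+1$'' appearing in the interpolation points). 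Since $\tau_{m-1}\Psi_{nm}\in\CM[0,\infty)$ (shift preservation), Theorem \ref{theorem3}(a) ensures that the sequence $(F_n(\alpha_m k))_{k\ge 0}$ belongs to $\CM^*(\NN)$. Proposition \ref{proposition4} then yields $F_n=\tau_{1/n}\Psi\in\CM[0,\infty)$ for every $n$. Finally, since $1/n\to 0$, Remark \ref{clos}(ii) concludes that $\Psi\in\CM(0,\infty)$.

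\textbf{Main obstacle.} The one genuine technicality is the indexing trick in Step 3: hypothesis (c) only provides CM interpolants at the \emph{shifted} lattice points $(k+1)/n$, whereas to apply Proposition \ref{proposition4} to $F_n=\tau_{1/n}\Psi$ one needs the sequence $(F_n(k/(nm)))_k$ to be minimal completely monotone. The identity $\frac{k}{nm}+\frac{1}{n}=\frac{(k+m-1)+1}{nm}$ lets one replace the missing ``first'' terms by an $m-1$ step shift of $\Psi_{nm}$, and shift-invariance of $\CM[0,\infty)$ together with Theorem \ref{theorem3} does the rest. Everything else is a direct application of the algebraic preservation properties already collected in Remark \ref{clos}, Theorem \ref{theorem3} and Proposition \ref{proposition4}.
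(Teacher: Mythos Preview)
Your proof is correct and follows essentially the same route as the paper: the cycle $(a)\Rightarrow(b)\Rightarrow(c)\Rightarrow(a)$ with the substantive step $(c)\Rightarrow(a)$ handled by shifting $\Psi$ by $1/n$ and verifying that $\tau_{1/n}\Psi$ meets the hypothesis of Proposition~\ref{proposition4} along the lattice $\alpha_m=1/(nm)$, then concluding via Remark~\ref{clos}(ii). Your presentation is in fact a little cleaner than the paper's, which writes out the Laplace measures and a change of variable explicitly, whereas you recognise directly that $(F_n(\alpha_m k))_{k\ge 0}=(\tau_{m-1}\Psi_{nm}(k))_{k\ge 0}$ is the restriction of a function in $\CM[0,\infty)$ and hence lies in $\CM^*(\NN)$; you also avoid the paper's case split on whether $\Psi(0+)$ is finite, since the shift handles both cases uniformly.
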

\begin{remark} (i) By continuity, it is not difficult to see that assertions  in Proposition  \ref{proposition4} (respectively Corollary \ref{corollary3}) are also equivalent to the following:
\begin{quote}
 $\Psi$ is continuous and the sequence ${\big(\Psi(xk)\big)}_{k\geq 0}$ (respectively ${\Big(\Psi\big(x(k+1)\big)\Big)}_{k\geq 0}$) belongs to $ \CM^*(\NN)$ for every $x\in (0,\infty)$ or for every $x \in \mathbb{Q}\cap (0,\infty)$,
\end{quote}
The latter is precisely what is stated in Lemma 3.1 in \cite{mai} in case $\Psi(0)=1$, the minimality condition was somehow occulted.

(ii) The reader could notice that Theorem \ref{theorem4} requires a supplementary assumption of holomorphy and of boundedness compared to Proposition \ref{proposition4} and Corollary \ref{corollary3}. The point is that Theorem \ref{theorem4} gives more information since for every function $\Psi$ satisfying condition (a) therein, we have
\begin{eqnarray}
\Psi \in \CM(0,\infty) &\Longleftrightarrow& \sigma_x\Psi \in \CM(0,\infty),\;  \mbox{\it for some}\; x \in (0,\infty) \nonumber \\
&\Longleftrightarrow& {\big(\Psi\big(x(k+1)\big)\big)}_{k\geq 0}\in
\CM^*(\NN), \; \mbox{\it for some}\; x \in (0,\infty).\label{maiss}
\end{eqnarray}
The same holds for $\Psi \in \CM[0,\infty)$  under the additional condition of finiteness of $\Psi(0+)$. The condition of minimality and  holomorphy appear to be the lowest price to pay in order to have the condition (\ref{maiss}) expressed for a single $x$ instead of all $x$. \label{equivcm}\end{remark}
\subsection{Bernstein property of functions is recognized by their restriction on $\NN$}
\begin{theorem} A function $\Phi:[0,\infty)\to [0,\infty)$ is a Bernstein function if and only if it

\begin{enumerate}[(a)]
\item the function $\Phi$ has an holomorphic extension on $Re(z)>0$ and satisfies there $|\Phi(c+z)-\Phi(z)|\leq M$ for some $c,\,M>0$;

\item the sequence $\big(\Phi(k)\big)_{k\geq 0}$ is completely alternating and minimal.
\end{enumerate}
\label{theorem5}\end{theorem}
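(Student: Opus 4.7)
If $\Phi\in\BF$ has L\'evy triplet $(q,d,\mu)$, the Lévy--Khintchine representation (\ref{berep}) extends $\Phi$ holomorphically to $\{Re(z)>0\}$, and the identity
\[
\Phi(z+c)-\Phi(z)=dc+\int_{(0,\infty)}e^{-zu}(1-e^{-cu})\mu(du)
\]
bounds its modulus by $\Phi(c)-q$ there, giving (a). Statement (b) is Theorem \ref{theorem3}(a) applied to $\Phi$; minimality holds because $\mu$ is supported in $(0,\infty)$ and therefore carries no atom at $u=0$ in the representation (\ref{bergu2}).

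\textbf{Sufficiency.} By the Remark following Proposition \ref{proposition3}, the Bernstein character of $\Phi$ is equivalent to $\psi(\lambda):=\Phi(\lambda+c)-\Phi(\lambda)$ belonging to $\CM[0,\infty)$. Hypothesis (a) provides that $\psi$ is bounded and holomorphic on $\{Re(z)>0\}$, so Theorem \ref{theorem4} reduces matters to checking that the restriction $(\psi(k))_k=(\Phi(k+c)-\Phi(k))_k$ lies in $\CM^*(\NN)$. When $c\in\No$ this follows from (b) alone: Remark \ref{r1}(iii) applied to the completely alternating sequence $(\Phi(k))$ with step $c$ makes the difference sequence $(\Phi(k+c)-\Phi(k))_k$ completely monotone, and the L\'evy-type formula from Theorem \ref{theorem3} identifies its representing measure as $dc\,\delta_{\{0\}}+(1-e^{-cu})\widetilde\mu(du)$ (with $(q,d,\widetilde\mu)$ the Lévy triplet of $(\Phi(k))_k$), which is finite and carries no mass at infinity, hence minimal.

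For non-integer $c$, the integer restriction $(\psi(k))_k$ is not determined by (b) alone, and I would fall back on the Gregory--Newton approach. Define the candidate Bernstein function $\widetilde\Phi(\lambda):=q+d\lambda+\int_{(0,\infty)}(1-e^{-\lambda u})\widetilde\mu(du)$, $\lambda\ge 0$. By (\ref{berep}), $\widetilde\Phi\in\BF$, extends holomorphically to $\{Re(z)>0\}$ with bounded $c$-increment (by the necessity computation), and satisfies $\widetilde\Phi|_\NN=\Phi|_\NN$. Integrating the binomial identity $1-e^{-zu}=\sum_{n\ge 1}(-1)^{n+1}\binom{z}{n}(1-e^{-u})^n$ against $\widetilde\mu$ yields the convergent Newton series
\[
\widetilde\Phi(z)=q+(d+\mu_1)z+\sum_{n\ge 2}(-1)^{n+1}\binom{z}{n}\mu_n,\qquad \mu_n:=\int_{(0,\infty)}(1-e^{-u})^n\widetilde\mu(du)\le\mu_1<\infty,
\]
whose coefficients coincide with the forward differences $\Delta^n\Phi(0)$ by (b); a Carlson-type interpolation principle for holomorphic functions of exponential type $<\pi$ on the right half-plane then identifies $\Phi$ with $\widetilde\Phi$, whence $\Phi\in\BF$.

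\textbf{Main obstacle.} The crux is this uniqueness step, which I would carry out via Blaschke's theorem on the open unit disk. The Cayley transform $z\mapsto(z-1)/(z+1)$ sends $\NN$ to a set whose Blaschke sum $\sum 2/(k+1)$ diverges, so any bounded holomorphic function on $\{Re(z)>0\}$ vanishing on $\NN$ must be identically zero. The delicate task is to extract from $\Phi-\widetilde\Phi$---which is only known to have bounded $c$-increment rather than being bounded itself---a bounded holomorphic surrogate vanishing on $\NN$. Producing such a surrogate requires combining the iterated growth bound $|\Phi(z+nc)-\Phi(z)|\le nM$ supplied by (a) with the L\'evy-Khintchine control on $\widetilde\Phi$ in the transverse direction, and it is at this point that hypotheses (a) and (b) genuinely collaborate.
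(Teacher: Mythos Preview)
Your necessity argument matches the paper's. For sufficiency you correctly build the candidate $\widetilde\Phi\in\BF$ interpolating $(\Phi(k))_k$ and correctly locate the obstacle: $\Phi-\widetilde\Phi$ is unbounded, so neither Blaschke nor Carlson applies to it directly, and the growth control you propose (iterating (a) to get $|\Phi(z+nc)-\Phi(z)|\le nM$) does not yield exponential type $<\pi$, since hypothesis~(a) says nothing about the behaviour of $\Phi$ in the imaginary direction.

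The bounded holomorphic surrogate you are searching for is simply $\theta_1\Phi-\theta_1\widetilde\Phi$, where $\theta_1 f(z)=f(1)-f(0)+f(z)-f(z+1)$; this is exactly what the paper uses. Two observations make it work. First, because the shift is by the \emph{integer} $1$ rather than by $c$, the values $\theta_1\Phi(k)=\Phi(1)-\Phi(0)+\Phi(k)-\Phi(k+1)$ depend only on $(\Phi(k))_k$, hence coincide with $\theta_1\widetilde\Phi(k)$ for every $k\in\NN$. Second, $\theta_1\widetilde\Phi$ is bounded on $Re(z)>0$ since $\widetilde\Phi\in\BF$ (Proposition~\ref{holom}), and $\theta_1\Phi$ is bounded there by hypothesis~(a) (the paper is somewhat elliptical about the passage from general $c$ to $c=1$). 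Blaschke (Corollary~\ref{blas}) then gives $\theta_1\Phi\equiv\theta_1\widetilde\Phi\in\BF_b^0$, and Proposition~\ref{proposition3} --- which you already invoked --- converts $\theta_1\Phi\in\BF_b^0$ directly into $\Phi\in\BF$. Note that one never needs $\Phi=\widetilde\Phi$ as an intermediate step; that identity is a \emph{consequence} (Corollary~\ref{co5}), not a lemma.

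Your first attempt, feeding $\psi=\Delta_c\Phi$ into Theorem~\ref{theorem4}, had exactly the right structure; the only misstep was taking the shift equal to the $c$ supplied by~(a) instead of $1$, which buys boundedness at the cost of severing the link to~(b). Taking the shift equal to $1$ secures both simultaneously.
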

Since every Bernstein functions $\Phi$ satisfies $\lambda \mapsto \Phi(\lambda)/\lambda \in \CM(0,\infty)$, we immediately deduce from Corollary \ref{corollary2} the following:
\begin{corollary}
Two  Bernstein functions coincide on the set of non-negative integers starting from a certain  rank
if and only if they are equal on $[0,\infty)$.
\label{co5}\end{corollary}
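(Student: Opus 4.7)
The plan is to reduce Corollary \ref{co5} directly to Corollary \ref{corollary2}, which already handles the analogous statement for completely monotone functions on $(0,\infty)$.

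First, I would recall the well-known fact that for any Bernstein function $\Phi$, the function $\lambda \mapsto \Phi(\lambda)/\lambda$ belongs to $\CM(0,\infty)$. This is the content of the hint preceding the corollary, and it can be verified directly from the L\'evy--Khintchine representation (\ref{berep}): writing
$$\frac{\Phi(\lambda)}{\lambda}=\frac{q}{\lambda}+d+\int_{(0,\infty)}\frac{1-e^{-\lambda x}}{\lambda}\,\mu(\dx)=\frac{q}{\lambda}+d+\int_0^\infty e^{-\lambda s}\mu\big((s,\infty)\big)\,ds,$$
each term is manifestly completely monotone on $(0,\infty)$.

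Next, suppose $\Phi_1,\Phi_2\in\BF$ coincide on $\{N,N+1,N+2,\ldots\}$ for some $N\in\NN$. Set $\Psi_i(\lambda):=\Phi_i(\lambda)/\lambda$ for $\lambda>0$ and $i=1,2$. Both $\Psi_i$ lie in $\CM(0,\infty)$ by the first step, and for every integer $k\geq N\vee 1$ one has
$$\Psi_1(k)=\frac{\Phi_1(k)}{k}=\frac{\Phi_2(k)}{k}=\Psi_2(k).$$
Thus $\Psi_1$ and $\Psi_2$ are two completely monotone functions on $(0,\infty)$ that agree on all integers $k\geq N\vee 1$. Corollary \ref{corollary2} immediately gives $\Psi_1\equiv \Psi_2$ on $(0,\infty)$, hence $\Phi_1(\lambda)=\lambda\Psi_1(\lambda)=\lambda\Psi_2(\lambda)=\Phi_2(\lambda)$ for every $\lambda>0$.

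To extend equality to $\lambda=0$, I would use the fact (recalled just before (\ref{berep})) that every Bernstein function admits a continuous extension to $[0,\infty)$, so
$$\Phi_1(0)=\lim_{\lambda\to 0^+}\Phi_1(\lambda)=\lim_{\lambda\to 0^+}\Phi_2(\lambda)=\Phi_2(0),$$
which yields $\Phi_1\equiv\Phi_2$ on $[0,\infty)$. The converse implication is trivial. There is really no serious obstacle here, since all the analytic work has already been carried out in Corollary \ref{corollary2}; the only thing to be careful about is to index from $k\geq 1$ when dividing by $\lambda$, which is harmless because the hypothesis concerns a cofinite subset of $\NN$.
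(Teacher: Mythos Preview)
Your proposal is correct and follows exactly the route the paper takes: pass from $\Phi$ to the completely monotone function $\lambda\mapsto\Phi(\lambda)/\lambda$ and invoke Corollary~\ref{corollary2}. Your added care with the index $k\geq N\vee 1$ and the continuity argument at $\lambda=0$ are appropriate refinements of what the paper leaves implicit.
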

\subsection{Bernstein property of functions is recognized by their restriction on lattices of the form $\alpha_n\NN$, where $\alpha\to 0$ }
As for completely monotone functions, the following two results characterize Bernstein property of functions  only  in terms of minimal completely alternating sequences, i.e. condition (a) in Theorem \ref{theorem5} would be self contained.
\begin{proposition} A  function $\Phi: [0,\infty)\longrightarrow\RP$ belongs to $\BF_b^0$ if and only if it is continuous and for some (and hence for all) sequence ${\left(\alpha_n\right)}_{n\geq 0}$ of positive numbers tending to zero,  there corresponds a sequence ${\left(\Phi_n\right)}_{n\geq
0}$ in $\BF_b^0$, such that the following representation holds for each $n\in \NN$:
$$\Phi(\alpha_n k)=\Phi_n(k),\quad \mbox{for all}\quad k\in\NN  \qquad \big(\mbox{\it i.e.}\; {\big(\Phi\big(\alpha_n k) \big)}_{k\geq 0} \in \CA^*(\NN)\big).$$
\label{proposition5}\end{proposition}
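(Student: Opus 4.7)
The plan is to prove the two implications separately; the ``some (and hence for all)'' quantifier then follows by combining the two, since ``$\Phi\in\BF_b^0$'' does not mention $(\alpha_n)$.

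For necessity, assume $\Phi\in\BF_b^0$. Every Bernstein function is continuous on $[0,\infty)$. Given any positive sequence $\alpha_n\to 0$, set $\Phi_n:=\sigma_{\alpha_n}\Phi$. By equivalence (\ref{bc}), $\Phi_n\in\BF$; since $\Phi_n$ has the same range as $\Phi$, it is bounded, and $\Phi_n(0)=\Phi(0)=0$. Hence $\Phi_n\in\BF_b^0$ and $\Phi(\alpha_n k)=\Phi_n(k)$ for all $k\in\NN$ (the minimality of the resulting sequence being then a consequence of Theorem \ref{theorem3}(a)).

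For sufficiency, assume the stated data. First, $\Phi(0)=\Phi_n(0)=0$ directly. To verify $\Phi\in\CA[0,\infty)$, fix $m\geq 1$ and $c_1,\ldots,c_m,\lambda\in[0,\infty)$, and set $k_n^{(i)}:=\lfloor c_i/\alpha_n\rfloor$, $k_n:=\lfloor\lambda/\alpha_n\rfloor$; then $\alpha_n k_n^{(i)}\to c_i$ and $\alpha_n k_n\to\lambda$. Expanding the iterated differences through (\ref{delta}) and exploiting $\Phi(\alpha_n\cdot)=\Phi_n(\cdot)$ on $\NN$ yields
$$\Delta_{\alpha_n k_n^{(1)}}\cdots\Delta_{\alpha_n k_n^{(m)}}\Phi(\alpha_n k_n)=\Delta_{k_n^{(1)}}\cdots\Delta_{k_n^{(m)}}\Phi_n(k_n).$$
By Proposition \ref{proposition2}(1), $\Phi_n\in\BF$ is completely alternating on $[0,\infty)$, so the right-hand side multiplied by $(-1)^m$ is non-positive. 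Passing to the limit $n\to\infty$ and using the continuity of $\Phi$, we obtain $(-1)^m\Delta_{c_1}\cdots\Delta_{c_m}\Phi(\lambda)\leq 0$, so $\Phi\in\CA[0,\infty)$. Combined with $\Phi\geq 0$, Proposition \ref{proposition2}(1) then gives $\Phi\in\BF$.

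It remains to establish boundedness. Since $\Phi\in\BF$ is non-decreasing and $\alpha_n k\to\infty$ as $k\to\infty$ for any fixed $n$,
$$\sup_{\lambda\geq 0}\Phi(\lambda)=\lim_{k\to\infty}\Phi(\alpha_n k)=\lim_{k\to\infty}\Phi_n(k)<\infty,$$
the last finiteness coming from $\Phi_n\in\BF_b^0$. Hence $\Phi\in\BF_b$, and together with $\Phi(0)=0$ this gives $\Phi\in\BF_b^0$. The only delicate point is the passage to the limit in the complete alternation inequality; it is controlled cleanly by $\alpha_n\to 0$ and the continuity of $\Phi$ via the floor approximations, so no serious obstacle arises.
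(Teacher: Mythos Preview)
Your proof is correct but follows a genuinely different route from the paper's. The paper establishes sufficiency by working with the integral representations: it writes $\Phi\big(\alpha_n[\lambda/\alpha_n]\big)=\int_{(0,\infty)}\big(1-e_n(\lambda,u)\big)\,\widetilde{\mu}_n(\du)$ for the rescaled representing measures $\widetilde{\mu}_n$ (all of common finite total mass $\lim_{\lambda\to\infty}\Phi(\lambda)$), exploits the uniform convergence $1-e_n(\lambda,u)\to 1-e^{-\lambda u}$ in $u$ proved in (\ref{uniform}), and then applies Helly's selection theorem exactly as in the proof of Proposition~\ref{proposition4} to produce a limiting measure $\mu$ with $\Phi(\lambda)=\int(1-e^{-\lambda u})\mu(\du)$.

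You instead bypass measures entirely and verify the defining inequalities of $\CA[0,\infty)$ directly: the identity $\Delta_{\alpha_n k_n^{(1)}}\cdots\Delta_{\alpha_n k_n^{(m)}}\Phi(\alpha_n k_n)=\Delta_{k_n^{(1)}}\cdots\Delta_{k_n^{(m)}}\Phi_n(k_n)$ transfers the sign condition from $\Phi_n\in\CA[0,\infty)$, and continuity of $\Phi$ plus the floor approximations pass it to the limit. Proposition~\ref{proposition2}(1) then closes the argument. This is more elementary---no weak compactness is needed---while the paper's approach has the advantage of exhibiting the representing measure of $\Phi$ as a vague limit of the rescaled $\widetilde{\mu}_n$. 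Both arguments ultimately hinge on continuity of $\Phi$ and $\alpha_n\to 0$; your handling of boundedness and of $\Phi(0)=0$ is also correct.
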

\begin{corollary} For a  function $\Phi: [0,\infty)\longrightarrow [0,\infty)$, the following conditions are
equivalent: \vspace{-0.2cm}
\begin{enumerate}[\;(a)]
\item $\Phi$ belongs to $\BF\,$;

\item $\Phi$ is continuous and to every  sequence ${\left(r_n\right)}_{n\geq 0}$ of positive rational numbers tending to zero,  there corresponds a sequence ${\left(\Phi_n\right)}_{n\geq 0}$ in $\BF$,   such that the following representation holds for each  $n\in \NN$:
$$\Phi(r_n \,k)=\Phi_n(k),\quad \mbox{for all}\quad k\in\NN \qquad \big(\mbox{\it i.e.}\; {\big(\Phi\big(r_n k) \big)}_{k\geq 0} \in \CA^*(\NN)\big);$$

\item $\Phi$ is continuous and there exists a sequence ${\left(\Phi_n\right)}_{n>0}$ in $\BF$,
such that the following representation holds for each $n\in \No$:
$$\Phi(\frac{k}{n})=\Phi_n(k),\quad \mbox{for all}\quad k\in\NN  \qquad \big(\mbox{\it i.e.}\; {\big(\Phi\big(\frac{k}{n}) \big)}_{k\geq 0} \in \CA^*(\NN)\big).$$
\end{enumerate} \label{corollary5}\end{corollary}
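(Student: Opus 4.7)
The plan is to run the cyclic chain (a) $\Rightarrow$ (b) $\Rightarrow$ (c) $\Rightarrow$ (a). The first two implications are immediate. For (a) $\Rightarrow$ (b), given $\Phi\in\BF$ and any positive sequence $(r_n)$, the scaling equivalence (\ref{bc}) yields $\Phi_n:=\sigma_{r_n}\Phi\in\BF$ with $\Phi_n(k)=\Phi(r_n k)$, while continuity of $\Phi$ on $[0,\infty)$ is built into Bernstein functions. For (b) $\Rightarrow$ (c) one simply takes $r_n=1/n$.

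The heart of the argument is (c) $\Rightarrow$ (a). The strategy is to reduce to the bounded setting by Proposition \ref{proposition3}, which asserts that $\Phi\in\BF$ is equivalent to $\theta_1\Phi\in\BF_b^0$, and then to recognize $\theta_1\Phi$ as a member of $\BF_b^0$ by applying Proposition \ref{proposition5} on the lattices $(1/n)\NN$. The bridge between these two results is the elementary identity
\begin{align*}
\theta_1\Phi\!\left(\tfrac{k}{n}\right)
&= \Phi(1)-\Phi(0)+\Phi\!\left(\tfrac{k}{n}\right)-\Phi\!\left(\tfrac{k+n}{n}\right) \\
&= \Phi_n(n)-\Phi_n(0)+\Phi_n(k)-\Phi_n(k+n) \;=\; \theta_n\Phi_n(k),
\end{align*}
valid for every $k\in\NN$ and $n\in\No$ by assumption (c). Since $\Phi_n\in\BF$, Proposition \ref{proposition3} applied to $\Phi_n$ gives $\theta_n\Phi_n\in\BF_b^0$, so by Theorem \ref{theorem3} the restriction $k\mapsto\theta_n\Phi_n(k)$ lies in $\CA^*(\NN)$. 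Thus $\big(\theta_1\Phi(k/n)\big)_{k\geq 0}\in\CA^*(\NN)$ for every $n\geq 1$, which is exactly the lattice hypothesis needed by Proposition \ref{proposition5} with $\alpha_n=1/n$.

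The remaining verifications are to check the standing hypotheses of Proposition \ref{proposition5} for the function $\theta_1\Phi$: that it maps $[0,\infty)$ into $[0,\infty)$ and is continuous. Continuity follows from continuity of $\Phi$ assumed in (c). Non-negativity is obtained by density: $\theta_n\Phi_n\geq 0$ on $\NN$ forces $\theta_1\Phi(k/n)\geq 0$ for all $k,n$, and since $\bigcup_{n\geq 1}(1/n)\NN$ is dense in $[0,\infty)$, continuity extends the inequality everywhere. Proposition \ref{proposition5} then delivers $\theta_1\Phi\in\BF_b^0$, and one final application of Proposition \ref{proposition3} gives $\Phi\in\BF$. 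The main obstacle, modest in nature, is really the bookkeeping in the displayed identity above; once it is read off, Propositions \ref{proposition3} and \ref{proposition5} together carry all the analytic weight.
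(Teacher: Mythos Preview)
Your proof is correct and follows the same overall route as the paper: reduce (c)$\Rightarrow$(a) via Proposition~\ref{proposition3} to showing $\theta_c\Phi\in\BF_b^0$, then feed the lattice data into Proposition~\ref{proposition5}. The paper carries this out for $\theta_{1/m}\Phi$ with arbitrary $m$ and unpacks the explicit Bernstein integral representation of $\Phi_{mn}$ to exhibit the finite measure required by Proposition~\ref{proposition5}; you instead fix $c=1$, invoke the algebraic identity $\theta_1\Phi(k/n)=\theta_n\Phi_n(k)$, and let Proposition~\ref{proposition3} (applied to $\Phi_n$) deliver $\theta_n\Phi_n\in\BF_b^0$ without ever writing down an integral. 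This is a genuine streamlining---the ``for some $c$'' clause of Proposition~\ref{proposition3} makes the loop over $m$ unnecessary, and recycling Proposition~\ref{proposition3} in the forward direction replaces the paper's explicit measure-theoretic bookkeeping---but the architecture is the same.
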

\begin{remark} As in Remark \ref{equivcm}, we can notice the following:

(i)  By continuity, assertions  in Corollary  \ref{corollary5}  (respectively Proposition \ref{proposition5}) are equivalent to the following assertion:
\begin{quote}
 $\Phi$ is continuous and the sequence ${\big(\Phi(x k)\big)}_{k\geq 0}$  belongs to $\in \CA^*(\NN)$ (respectively belongs to $\in \CA^*(\NN)$ and is bounded) for every $x \in (0,\infty)$ or for every  $x\in{\mathbb Q}_+$.
\end{quote}
(ii) Theorem \ref{theorem5} requires a supplementary assumption of holomorphy and of sub-affinity compared to Proposition  \ref{proposition5} and Corollary \ref{corollary5}. Theorem \ref{theorem5} gives more information since for every function $\Phi$ satisfying condition (a) therein, we have
\begin{quote}
$\Phi \in \BF \;\Longleftrightarrow \; \sigma_x\Phi \in \BF$ for some $x \in (0,\infty) \;\Longleftrightarrow \; {\Big(\Phi(x k)\Big)}_{k\geq 0}$  belongs to $\in \CA^*(\NN)$ (respectively belongs to $\in \CA^*(\NN)$ and is bounded) for some $x \in (0,\infty)$.
\end{quote}
\end{remark}
\section{Some pre-requisite}\label{prerequisite}
The following results are crucial in order to conduct our proofs.\\

\subsection{On iterative functional equations and asymptotic of differences}
We  first present a result of Webster \cite{Webster-1997} which will be used in the proofs of Propositions \ref{proposition1} and \ref{proposition2}. Given a log-concave function $g:[0,\infty)\rightarrow [0,\infty)$, he considered the iterative functional equation
\begin{equation}\label{functional-equation}
f(x+1)=g(x)f(x),\quad  x>0, \quad \hbox{and}\quad f(1)=1.
\end{equation}
Motivated by the study of generalized gamma functions and their characterization by a Bohr-Mollerup-Artin type theorem, Webster studied equations  of type (\ref{functional-equation}). A combination of Theorems 4.1 and 4.2 \cite{Webster-1997} gives results that were stated in \cite{ajr} under this form:
\begin{theorem}\label{Webster}[Webster, \cite{Webster-1997}]  Assume that $g:[0,\infty)\to [0,\infty)$ is log-concave and $ \lim_{a\rightarrow \infty} \frac{g(x+a)}{g(a)}=1$ for every $a>0$. For $n\geq 1$, let $a_n= (g'_{-}(n)+g'_{+}(n))/2g(n)$ and  $\gamma_g= \lim_{n\rightarrow \infty} \left(\sum_1^{n} a_j-\log g(n) \right)$. Then, there exists a unique  log-convex  solution $f:[0,\infty)\rightarrow [0,\infty)$  to the functional equation
(\ref{functional-equation}) satisfying $f(1)=1$ and given by
\begin{equation}\label{gamma-1}
f(x)=\frac{e^{-\gamma_g x}}{g(x)}
\prod_{n=1}^{\infty}\frac{g(n)}{g(n+x)}e^{a_n x},\quad x>0.
\end{equation}
If furthermore $ \lim_{a\rightarrow \infty}  g(x)=1$, then the representation simplifies to
\begin{equation}\label{gamma-2}
f(x)=\frac{1}{g(x)} \prod_{n=1}^{\infty}\frac{g(n)}{g(n+x)},\quad x>0.
\end{equation}
\end{theorem}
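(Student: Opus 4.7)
The plan is to adapt the classical Bohr--Mollerup--Artin argument, which identifies the gamma function as the unique log-convex solution of $\Gamma(x+1)=x\Gamma(x)$, $\Gamma(1)=1$, by replacing the multiplier $x$ with the general log-concave $g$. I would split the work into uniqueness, convergence of the infinite product, log-convexity, and verification of the functional equation and normalization.

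For uniqueness, suppose $f\colon[0,\infty)\to[0,\infty)$ is log-convex and satisfies (\ref{functional-equation}) with $f(1)=1$. Iterating gives $f(n)=\prod_{j=1}^{n-1}g(j)$ and, for every $x>0$, $f(n+x)=f(x)\prod_{j=0}^{n-1}g(x+j)$. For $x\in(0,1]$, monotonicity of secant slopes of the convex function $\log f$ yields the sandwich
\begin{equation*}
\log g(n-1)\;\leq\;\frac{\log f(n+x)-\log f(n)}{x}\;\leq\;\log g(n).
\end{equation*}
Substituting the two identities above isolates $\log f(x)$ between two quantities whose difference is $x\log\big(g(n)/g(n-1)\big)$, which vanishes as $n\to\infty$ by the asymptotic hypothesis (which, combined with concavity of $\log g$, also forces $\log g$ to be eventually non-decreasing). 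This pins $f(x)$ down on $(0,1]$, and the functional equation extends uniqueness to $(0,\infty)$.

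For existence I would define $f$ by (\ref{gamma-1}) and put $\phi=\log g$, $T_n(x)=\phi(n)-\phi(n+x)+a_n x$. Concavity of $\phi$ gives $\phi'_+(n)\leq a_n\leq \phi'_-(n)$ and the secant bounds $x\phi'_-(n+x)\leq\phi(n+x)-\phi(n)\leq x\phi'_+(n)$, which combine to
\begin{equation*}
0\;\leq\;T_n(x)\;\leq\;x\big[\phi'_-(n)-\phi'_-(n+x)\big].
\end{equation*}
The hypothesis forces $\phi'_-$ to be non-increasing, non-negative, and to tend to zero, so for any integer $m\geq x$ the partial sums $\sum_{n=1}^{N}\big[\phi'_-(n)-\phi'_-(n+x)\big]$ are uniformly bounded by $\sum_{n=1}^{m}\phi'_-(n)$; hence $\sum_n T_n(x)$ converges and (\ref{gamma-1}) is well defined. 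Log-convexity of $f$ then follows from $\log f(x)=-\gamma_g x-\phi(x)+\sum_{n\geq 1}T_n(x)$, a sum of convex functions plus an affine term. To check $f(x+1)=g(x)f(x)$ and $f(1)=1$ I would compute $\log f(x+1)-\log f(x)$ directly: the shifted partial sums $\log g(1+x)+\big[\phi(N)-\phi(N+x+1)\big]+\big[\sum_1^N a_j-\phi(N)\big]$ converge to $\log g(x+1)+\gamma_g$, by the very definition of $\gamma_g$ together with the hypothesis, so that $\log f(x+1)-\log f(x)=\log g(x)$ and the same computation at $x=1$ yields $f(1)=1$. The simplification to (\ref{gamma-2}) under the additional assumption $g(a)\to 1$ is then immediate: $a_n\to 0$, $\phi(n)\to 0$, hence $\gamma_g=\sum_{n\geq 1}a_n$ and the exponentials cancel.

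The main obstacle is the convergence of $\sum_n T_n(x)$: one must extract a telescoping upper bound from the one-sided derivatives of the concave $\phi$ and combine it with $\phi'_-(n)\to 0$, which itself must be drawn from the asymptotic hypothesis via concavity. Once convergence and locally uniform estimates on $T_n(x)$ are available, log-convexity, the functional equation, and the normalization reduce to reorganizing the infinite product, and the necessary rearrangement is justified by locally uniform convergence.
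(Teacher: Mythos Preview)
The paper does not prove this theorem: it is quoted in the pre-requisite section as a combination of Theorems~4.1 and~4.2 of Webster \cite{Webster-1997} (reformulated as in \cite{ajr}) and is used as a black box in the proofs of Propositions~\ref{proposition1} and~\ref{proposition2}. So there is no ``paper's own proof'' to compare against.

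That said, your sketch is a faithful outline of the Bohr--Mollerup--Artin strategy underlying Webster's original argument. The uniqueness step via secant-slope monotonicity of $\log f$ is the standard one, and your existence step correctly identifies the key analytic issue: controlling $T_n(x)=\phi(n)-\phi(n+x)+a_n x$ by a telescoping bound in the one-sided derivatives of $\phi=\log g$. Two points you leave implicit are worth flagging. First, the very existence of $\gamma_g$ is part of what has to be proved; it follows from the same telescoping estimate, since $a_j-(\phi(j+1)-\phi(j))$ is non-negative and dominated by $\phi'_-(j)-\phi'_-(j+1)$. Second, in the verification of the functional equation you silently rearrange an infinite sum; this is legitimate once you have locally uniform convergence of $\sum_n T_n(x)$, but you should say so explicitly. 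With these details filled in, your proposal is a complete proof of the quoted result.
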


Theorem 1.1.8 p. 5 \cite{bin} says that if $l:\RR\to \RR$   is additive (i.e. $l(x+y)=l(x)+l(y),  \; \forall x,y \in \RR$), and measurable, then $l(x) = Cx$ for some  $C\in\RR$. On the other hand, consider a function $l:[0,\infty)\to [0,\infty)$ solution of the iterative equation
$$l(x+1)=l(x)+l(1),  \quad x \in (0,\infty).$$
Take $g(x)=e^{l(1)}$ and $f(x)=e^{l(x)-l(1)}$ in Theorem \ref{Webster}. Clearly, $a_n=0$  and $\gamma_g=-l(1)$ and (\ref{gamma-1}) yields that the unique convex solution is given by $l(x) = l(1)\,x$, $x\geq 0$. It would be desiderate to have a similar conclusion without the convexity assumption. Karamata's characterization theorem for regularly varying functions (Theorem 1.4.1 p.17 in \cite{bin}), says that if $\lim_{x\to \infty}h(\lambda +x)-h(x) =l(\lambda)$, then there exists a real number $\rho$ such that $\lim_{x\to \infty}\big(h(\lambda +x)-h(x)\big)=\rho \lambda$ for every $\lambda\geq 0$. We propose the following lemma as an improvement of Karamata's characterization:
\begin{lemma} Suppose two function $h, l:[0,\infty)\to [0,\infty)$ are linked for every $\lambda \geq 0$ by
the limit
 $$h(\lambda+n)-h(n)\to l(\lambda),   \quad \mbox{as $n\to \infty$ and $n\in \No$}.$$
Then, necessarily  $l(\lambda)=\lambda l(1)$ with  $l(1)\geq 0$ and
\begin{equation} h(\lambda+x)-h(x)\to l(\lambda),  \quad \mbox{as $x\to \infty$, uniformly in each compact $\lambda$-set in $[0,\infty)$}
\label{conv} \end{equation}
\end{lemma}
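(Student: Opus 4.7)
The plan is to first settle the statement at integer arguments of $l$, then bootstrap additivity to all real arguments via measurability, and finally upgrade the convergence from integer $n$ to continuous $x\to\infty$.

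For the integer skeleton, set $\psi_n(\lambda):=h(\lambda+n)-h(n)$, so $\psi_n\to l$ pointwise on $\RP$. Trivially $l(0)=0$, and $l(1)\geq 0$ because the codomain of $l$ is $\RP$. The driver is the decomposition
\[
\psi_n(m+\lambda) = \bigl(h(m+\lambda+n)-h(m+n)\bigr) + \bigl(h(m+n)-h(n)\bigr) = \psi_{m+n}(\lambda) + \psi_n(m),
\]
valid for every $m\in\NN$ and every $\lambda\geq 0$. Since $m+n\in\NN$ tends to infinity, the first summand tends to $l(\lambda)$. Applying this to $\lambda=1$ and telescoping gives $\psi_n(m) = \sum_{k=0}^{m-1}\psi_{n+k}(1)\to m\,l(1)$, hence $l(m)=m\,l(1)$ for every $m\in\NN$, and consequently
\[
l(m+\lambda) = l(\lambda) + m\,l(1), \qquad m\in\NN,\ \lambda\geq 0.
\]

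The heart of the proof is to extend this to full additivity $l(\lambda_1+\lambda_2)=l(\lambda_1)+l(\lambda_2)$ for all $\lambda_1,\lambda_2\geq 0$. Since $l$ is the pointwise limit of the measurable functions $\psi_n$, it is itself measurable. I would invoke Egorov's theorem on each compact $[0,M]$: for every $\epsilon>0$ there is a measurable $E\subset[0,M]$ of Lebesgue measure at least $M-\epsilon$ on which $\psi_n\to l$ uniformly. A Steinhaus-type argument — exploiting that $E-E$ contains a neighborhood of $0$, combined with the translation identity $\psi_n(\mu_1)-\psi_n(\mu_2)=h(\mu_1+n)-h(\mu_2+n)$ — then promotes pointwise integer convergence to genuine additivity of $l$. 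Once additivity is in hand, Theorem 1.1.8 of \cite{bin} (measurable additive functions are linear) yields $l(\lambda)=c\lambda$, and the non-negativity of $l$ forces $c=l(1)\geq 0$.

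For the last assertion, write every $x\geq 0$ as $x=n+\eta$ with $n=\lfloor x\rfloor\in\NN$ and $\eta\in[0,1)$, and use
\[
h(\lambda+x) - h(x) = \psi_n(\lambda+\eta) - \psi_n(\eta).
\]
The linearity just proved gives $l(\lambda+\eta)-l(\eta)=\lambda\,l(1)=l(\lambda)$ independently of $\eta$; combining this with the uniform control on $\psi_n-l$ supplied by Egorov on $[0,M+1]$ delivers the locally uniform limit~(\ref{conv}).

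The main obstacle is the middle step: bootstrapping pointwise integer convergence of $\psi_n$ to genuine additivity of $l$ without any a priori continuity hypothesis on $h$ or $l$. Integer convergence alone leaves room for fractional-part-type behavior in $h$, so the Egorov-plus-Steinhaus upgrade must be executed with care; the non-negativity of $h$ together with the measurability inherited from the hypothesis are the tools that must be leveraged to rule out such pathologies and force $l$ to be truly linear rather than merely linear modulo a periodic correction.
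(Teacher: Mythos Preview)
Your integer skeleton is correct and matches the paper's step (a). The divergence is in the order of the remaining work: the paper first establishes the uniform convergence (\ref{conv}) along real $x\to\infty$ and \emph{then} deduces linearity of $l$, whereas you attempt the reverse.

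\textbf{The last step misuses Egorov.} Egorov's theorem only yields uniform convergence of $\psi_n\to l$ on some measurable $E\subset[0,M+1]$ of measure $>M+1-\epsilon$, not on all of $[0,M+1]$. Your decomposition $h(\lambda+x)-h(x)=\psi_n(\lambda+\eta)-\psi_n(\eta)$ with $\eta=\{x\}$ requires control of $\psi_n$ at \emph{every} point of $[0,M+1)$, since $\eta$ and $\lambda+\eta$ are forced by $x$ and $\lambda$ and cannot be arranged to lie in $E$. So Egorov does not deliver (\ref{conv}). What is actually needed is the Uniform Convergence Theorem (Theorem~1.2.1 in \cite{bin}), whose proof is a considerably sharper measurability argument than Egorov; the paper reproduces that proof verbatim with $x$ restricted to $\No$ to get uniformity along integers, and only then applies the integer-part trick you wrote down to pass to real $x$.

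\textbf{The middle step is not a proof.} You flag it as the main obstacle, and it is genuine. The translation identity $\psi_n(\mu_1)-\psi_n(\mu_2)=h(\mu_1+n)-h(\mu_2+n)$ together with $E-E\supset(-\delta,\delta)$ does not visibly produce $l(\lambda_1+\lambda_2)=l(\lambda_1)+l(\lambda_2)$: the term $h(\lambda_1+\lambda_2+n)-h(\lambda_2+n)$ is not of the form $\psi_m(\lambda_1)$ for integer $m$ unless $\lambda_2\in\NN$, and without continuity of $l$ you cannot interpolate from $E$ to its complement. The paper sidesteps this entirely: once (\ref{conv}) holds for real $x\to\infty$, Karamata's characterisation theorem (Theorem~1.4.1 in \cite{bin}) applies directly and forces $l(\lambda)=\rho\lambda$.

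In short, the paper's route --- uniform convergence first via the UCT argument adapted to integer $n$, linearity second via Karamata --- is the working order; your reversal stalls because Egorov is strictly weaker than the UCT.
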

\begin{proof} The proof goes through the following four steps:

\noindent a) For every $\lambda \geq 0$, write  that
$$l(\lambda+1)=\lim_{n\to \infty}[h(\lambda+1+n)-h(n)]=\lim_{n\to \infty}[h(\lambda+1+n)-h(n+1)] +\lim_{n\to \infty}[h(n+1)-h(n)] =  l(\lambda)+l(1)$$
and  retrieve that
\begin{equation}\label{lmn}
l(\lambda+m)=l(\lambda)+ l(m)=l(\lambda)+ l(1)\, m,\quad \mbox{for every}\; \lambda \geq 0, m\in\NN.
\end{equation}
Since $h(n+1)-h(n)$ converges to $l(1)$, then, so does its Ces\`aro mean
$$ l(1)= \lim_{n\to \infty} \frac{1}{n}\sum_{i=0}^{n-1}[h(i+1)-h(i)]=\lim_{n\to \infty} \frac{h(n)-h(0)}{n}=\lim_{n\to \infty} \frac{h(n)}{n}, $$
and deduce that $l(1)\geq 0$.

\noindent b) Case where $l\equiv 0$ (i.e. $l\equiv0$): Assume that a function  $k:[0,\infty)\to [0,\infty)$ satisfies
\begin{equation*}\label{kmn}
\lim_{n\to \infty, \, n\in \No}k(\lambda+n)-k(n)\to 0.
\end{equation*}
Reproduce identically the first proof of Theorem 1.2.1 p. 6 \cite{bin} (by taking with their notations $x=n\in \No$) in order to get $k(\lambda+n)-k(n)\to 0$ uniformly in each compact $\lambda$-set in $(0,\infty)$ as $n\to \infty$ and $n\in \No$. Denote $\{x\}$ and $[x]$ the fractional and integer part of $x$. Then, mimicking  the end of the second proof of Theorem 1.2.1 p. 6 \cite{bin}, take an arbitrary  compact interval $[a,b]$ in $[0,\infty)$ and  observe that
\begin{eqnarray*}
\sup_{\lambda\in [a,b]}\big|k(\lambda+x)-k(x)\big|&=&\sup_{\lambda\in [a,b]}\big|k(\lambda+\{x\}+[x])-k(\{x\}+[x])\big|\\
&\leq& \sup_{u\in [a,b+1]} \big|k(u+[x])-k([x])\big|+\sup_{u\in [0,1]}\big|k(u+[x])-k([x])\big|\\
\end{eqnarray*}
goes to zero as $[x]\to \infty$. Finally, get
\begin{equation} k(\lambda+x)-k(x)\to 0,  \quad \mbox{as $x\to \infty$, uniformly in each compact $\lambda$-set in $[0,\infty)$}.
\label{conv0} \end{equation}

\noindent c) Case where $l \equiv\!\!\! \!\!/$ $0$: Taking $k(x)=h(x)-l(x)$ and
using (\ref{lmn}),  obtain for every $\lambda>0$
$$k(\lambda+n)-k(n)=  h(\lambda+n) - l(\lambda+n) -h(n) + l(n) =  h(\lambda+n) - h(n)-l(\lambda) \to 0.$$
as $n\to \infty$. By step b)  deduce that $k$ satisfies (\ref{conv0}).

\noindent d) Taking $h(x)=\log f(e^x)$ with $f$ as in Theorem 1.4.1 p. 17 \cite{bin}, conclude that necessarily the function $l$ is linear, i.e. $l(\lambda)=l(1) \lambda$.
\end{proof}
\subsection{On Blaschke's characterization theorem}
The second result, due to Blaschke, allows to identify holomorphic functions given their restriction along suitable sequences:
\begin{theorem}[Blaschke,  Corollary  p. 312 in Rudin \cite{rudin}]
If $f$ is holomorphic and bounded on the open unit disc $D$,  if $\alpha_1,\alpha_2,\alpha_3, \cdots$  are the zeros of $f$ in $D$ and if $\sum_{i=1}^\infty  (1 - |\alpha_i|) = \infty,$ then $f(z) = 0$ for all $z \in D$.
\end{theorem}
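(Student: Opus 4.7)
The plan is to prove the contrapositive: assume $f$ is holomorphic and bounded on $D$ with $f\not\equiv 0$, having zeros $\alpha_1,\alpha_2,\ldots$ listed with multiplicity, and deduce that $\sum_i (1-|\alpha_i|)<\infty$. By rescaling I would first reduce to $\sup_D |f|\leq 1$, and by factoring out a power $z^m$ (which removes only a possible zero at the origin while keeping the function holomorphic and bounded on $D$) I would also assume $f(0)\neq 0$. Note that this factoring changes $\{\alpha_i\}$ only by finitely many terms, so the convergence/divergence of $\sum_i(1-|\alpha_i|)$ is unaffected.

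The main tool is Jensen's formula. For any $r\in(0,1)$ whose circle $|z|=r$ contains no zero of $f$, it gives
\[
\log|f(0)| \;+\; \sum_{|\alpha_i|<r}\log\frac{r}{|\alpha_i|} \;=\; \frac{1}{2\pi}\int_0^{2\pi}\log|f(re^{i\theta})|\,d\theta.
\]
Since $|f|\leq 1$ on $D$, the integrand on the right is nonpositive, hence the right-hand side is $\leq 0$. Rearranging yields the uniform bound
\[
\sum_{|\alpha_i|<r}\log\frac{r}{|\alpha_i|} \;\leq\; \log\frac{1}{|f(0)|}
\]
valid for all admissible $r\in(0,1)$.

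Letting $r\to 1^-$ along radii that avoid the zero set, each term $\log(r/|\alpha_i|)$ increases to $\log(1/|\alpha_i|)\geq 0$, so monotone convergence gives $\sum_i \log(1/|\alpha_i|) \leq \log(1/|f(0)|) <\infty$. The elementary inequality $\log(1/x)\geq 1-x$ for $x\in(0,1]$, which follows from concavity of $\log$ at $x=1$, then yields $\sum_i (1-|\alpha_i|) \leq \sum_i \log(1/|\alpha_i|) < \infty$, contradicting the hypothesis and forcing $f\equiv 0$ on $D$.

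Granting Jensen's formula, the argument is essentially a one-line estimate, so the conceptual hard part lies in that tool itself. The only care points I anticipate are the reduction to $f(0)\neq 0$ and the monotone passage to the limit $r\to 1^-$ along radii that avoid zeros; the latter is legitimate because the zero set of a nonzero holomorphic function on $D$ is discrete and therefore intersects only countably many circles $|z|=r$, so the full measure of admissible radii is dense in $(0,1)$.
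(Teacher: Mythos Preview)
Your proof is correct and is precisely the standard argument via Jensen's formula. Note, however, that the paper does not supply its own proof of this statement: it is quoted as a known result from Rudin (Corollary, p.~312) and used as a black box in the proofs of Theorems~\ref{theorem4} and~\ref{theorem5}. The argument you wrote is essentially the one found in Rudin, so there is nothing to contrast.
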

Using the is conformal one-to-one mapping of the open unit disc onto the open right half plane
$$\theta(z) = \frac{1+z}{1-z},$$
one can easily rephrase  Blaschke's theorem for function defined on the open right half plane:
\begin{corollary}\label{blas}
Two holomorphic functions on the open right half plane $P$ are identical if their difference is bounded and they coincide along a sequence $z_1,z_2,z_3, \cdots$ in $P$,  such that the series $\sum   (1 - |\frac{z_i-1}{z_i+1}|)$ diverge and in particular for $z_i=i\in \No.$
\end{corollary}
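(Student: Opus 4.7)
The plan is to reduce the corollary to Blaschke's theorem on the open unit disk $D$ via the conformal bijection $\theta\colon D\to P$ given by $\theta(z)=(1+z)/(1-z)$, whose inverse $\theta^{-1}(w)=(w-1)/(w+1)$ maps $P$ onto $D$. First I would set $g:=f_1-f_2$. By hypothesis $g$ is holomorphic on $P$, bounded there, and vanishes along the sequence $(z_i)$, so it suffices to prove $g\equiv 0$ on $P$.

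Next, push $g$ back to $D$ by setting $h:=g\circ \theta$. Then $h$ is holomorphic on $D$ (composition of holomorphic maps) and bounded on $D$ (because $\theta$ maps $D$ onto $P$ and $g$ is bounded on $P$). The zeros of $h$ in $D$ contain the points $\alpha_i:=\theta^{-1}(z_i)=(z_i-1)/(z_i+1)$, and the hypothesis on the $z_i$'s reads precisely $\sum_{i\geq 1}(1-|\alpha_i|)=\infty$. Blaschke's theorem as quoted above then forces $h\equiv 0$ on $D$; transporting back by $\theta$ gives $g\equiv 0$ on $P$, i.e.\ $f_1\equiv f_2$ on $P$.

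For the specific case $z_i=i\in\No$, one has $\alpha_i=(i-1)/(i+1)\in[0,1)$, so $1-|\alpha_i|=2/(i+1)$ and $\sum_{i\geq 1} 2/(i+1)=\infty$ by comparison with the harmonic series, confirming that the Blaschke divergence condition is met and hence interpolation along the positive integers suffices.

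There is no genuine obstacle: the argument is purely a change of variable paired with Rudin's corollary to Blaschke. The only point worth emphasizing is that boundedness is required only for the \emph{difference} $f_1-f_2$, not for the two functions individually, which is precisely the flexibility needed when applying this corollary to pairs of possibly unbounded completely monotone or Bernstein functions later in the paper.
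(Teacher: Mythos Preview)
Your proof is correct and follows precisely the approach the paper indicates: the paper does not spell out a detailed argument but simply observes that the conformal bijection $\theta(z)=(1+z)/(1-z)$ from $D$ onto $P$ allows one to rephrase Blaschke's theorem on the right half plane, which is exactly what you carry out. Your additional remark about needing boundedness only for the difference $f_1-f_2$ is apt and matches how the corollary is used later.
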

\begin{remark}
Corollary \ref{blas} will be used essentially in the proofs of Theorems \ref{theorem4} and \ref{theorem5} for checking the equality between two functions coinciding along the sequence of positive integers. We are totally aware that Theorems \ref{theorem4} and \ref{theorem5} could be rephrased in a more general setting with different sequences. For clarity's sake, we preferred to state our results there under their current form.
\end{remark}
\subsection{On Gregory-Newton development}
In the alternative proofs of Theorems \ref{theorem4} and \ref{theorem5}, we will also need the concept of Gregory-Newton development that we recall here:
\begin{definition} A function $f$ defined on some domain $D$ of the complex plane is said to admit a Gregory-Newton development if there exists some sequence ${(a_k)}_{k\geq 0}$ such that
\begin{equation*}\label{newton}
f(z)=\sum_{k=0}^{\infty}(-1)^k \, \frac{a_k}{k!} \,  z^{\underline{k}},\quad z\in D,
\end{equation*}
where
$$z^{\underline{0}} = 1\quad \mbox{and}\quad z^{\underline{k}}= z(z-1)\cdots(z-k+1)=1,\;\; k\geq 1.$$
\end{definition}
\begin{remark} (i) Notice that the factorial powers $z^{\underline{n}}$ and the usual powers $z^k$ are related through the relations
$$z^{\underline{n}} = \sum_{k=0}^n {n\brack k}\,(-1)^{n-k}\, z^k \quad \mbox{and} \quad z^n = \sum_{k=0}^n {n\brace k}\, z^{\underline{k}}\;,$$
where ${ n\brack k}$ and ${n\brace k}$ are the Stirling numbers of the first and second kind respectively. These relations allow to swap between
Gregory-Newton and power series developments  whenever it is possible. This clarifies why a Gregory-Newton development for a holomorphic function is unique.

(ii) For functions $f$ admitting a Gregory-Newton development, N\"{o}rlund (\cite{nor}  p. 103), showed that necessarily
$$a_k= (-1)^k \, \Delta^k f(0),\quad k\geq 0.$$

(iii) It is worth noting that  the transformation
$${\big(f(l)\big)}_{l=0, \cdots m}\mapsto {\big((-1)^n\,\Delta^n f(0)\big)}_{n=0, \cdots m}$$ is the classical binomial transform which is involutive. Since the operators $\tau$ and $\Delta$ commute, and so do their iterates, it is immediate that the transformation
${\big(f(k+l)\big)}_{l=0, \cdots m}\mapsto {\big((-1)^n\,\Delta^n f(k)\big)}_{n=0, \cdots m}$ is also involutive for every fixed $k\in \NN$. The transformation ${\big(f(l)\big)}_{l=0, \cdots m}\mapsto {\big(\Delta^n f(0)\big)}_{n=0, \cdots m}$ is called the Euler transform. It is not an  involution but remains one-to-one (see \cite{flageolet}). It is now clear that
\begin{equation}\label{una}
\mbox{the sequence}\;\, {\left(\Delta^k f(0)\right)}_{k\geq 0} \;\, \mbox{is one-to-one with the sequence}\;\, {\left(f(k)\right)}_{k\geq 0}.
\end{equation}
\label{1a1}\end{remark}

It is trivial that any function $f:D\subset \mathbb{C}\to \mathbb{C}$ could be represented by an interpolating polynomial $P_n$ of a degree $n\geq 1$, plus a remainder function $R_n$:
$$f=P_n+R_n,\quad\mbox{where} \quad P_n(z)=\sum_{k=0}^{n}  \frac{\Delta^k f(0)}{k!} \, z^{\underline{k}}\,.$$
The following result clarifies when the remainder function goes to zero, i.e. when $f$ could be expanded  in a unique way (see point (i) in Remark \ref{1a1}) into a Gregory-Newton series given by
\begin{equation}\label{newton}
f(z)=\sum_{k=0}^{\infty}\frac{\Delta^k f(0)}{k!} \,
z^{\underline{k}}.
\end{equation}
\begin{theorem}[N\"{o}rlund, \cite{nor}  p. 148] In order that a function $f$ admits a Gregory-Newton development (\ref{newton}), it is necessary and sufficient that $f$ is holomorphic in a certain half-plane $Re(z)>\alpha$   and  $f$ is of the exponential type, i.e.
\begin{equation}\label{nor}
\Big|f(z)\Big| \leq Ce^{D |z|},
\end{equation}
where $C$ and $D$ are fixed positive numbers.
\label{norlund}\end{theorem}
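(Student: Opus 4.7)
The plan is to establish both directions of Nörlund's theorem, with the substantive work concentrated on sufficiency. Before splitting into the two implications, I would observe that by Remark \ref{1a1}(ii) the coefficients in any Gregory-Newton expansion of $f$ are rigidly determined by the values $f(k)$, $k\in\NN$, so the question reduces to deciding when the formally associated series
$$\sum_{k=0}^{\infty}\frac{\Delta^k f(0)}{k!}\,z^{\underline{k}}$$
converges and whether its sum recovers $f$.

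For necessity, suppose $f$ equals this series on some domain. Each partial sum $P_n$ is a polynomial, hence entire, and a Cauchy-Hadamard type argument (adapted to Newton rather than Taylor series) shows that the natural convergence set is a right half-plane $Re(z)>\alpha$ on which the convergence is uniform on compacts; this gives holomorphy. Exponential type then follows from the elementary estimate
$$\bigl|z^{\underline{k}}\bigr|\;=\;\bigl|z(z-1)\cdots(z-k+1)\bigr|\;\leq\;k!\binom{|z|+k}{k}\;\leq\;k!\,e^{|z|+k},$$
combined with the decay of $|\Delta^k f(0)|/k!$ guaranteed by convergence at any single interior point. This gives a global estimate of the required form $|f(z)|\leq Ce^{D|z|}$.

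For sufficiency, the cornerstone is the Nörlund-Rice integral representation. Assuming $f$ holomorphic on $Re(z)>\alpha$ with $|f(z)|\leq Ce^{D|z|}$, choose a positively-oriented contour $\gamma_n$ lying in the half-plane and enclosing $0,1,\ldots,n$. A residue computation identifies
$$\frac{\Delta^n f(0)}{n!}\;=\;\frac{1}{2\pi i}\oint_{\gamma_n}\frac{f(z)}{z(z-1)\cdots(z-n)}\,dz,$$
since the integrand has simple poles at each $k\in\{0,\ldots,n\}$ with residues $(-1)^{n-k}f(k)/\bigl(k!(n-k)!\bigr)$, whose sum is precisely $\Delta^n f(0)/n!$ by (\ref{delta}). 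Plugging in the exponential-type bound for $f$ on $\gamma_n$ yields quantitative control of $|\Delta^n f(0)|/n!$. The same residue machinery, applied now to a contour enclosing both $\{0,\ldots,n\}$ and the evaluation point $z$, rewrites the interpolation remainder $f(z)-P_n(z)$ as a contour integral; pushing $n\to\infty$ and using the estimates above shows this remainder vanishes locally uniformly in the half-plane, yielding (\ref{newton}).

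The principal obstacle is the asymptotic balance inside the remainder integral: the exponential growth $e^{D|z|}$ of $f$ along $\gamma_n$ must be defeated by the factorial growth of $|z(z-1)\cdots(z-n)|$ in the denominator, yet the denominator becomes small near the interpolation nodes. The contour $\gamma_n$ must therefore be inflated as $n$ grows while staying uniformly away from $\NN$, and the competition between these rates is what fixes the admissible exponential type (in the sharpest classical formulation, one needs $D$ below a threshold tied to the unit spacing of the nodes). Carrying out this contour optimization carefully, and handling the boundary case where $z$ approaches an integer in the half-plane, constitutes the real technical work of the proof.
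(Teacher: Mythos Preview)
The paper does not prove this theorem at all: it is quoted verbatim from N\"orlund's book \cite{nor} (page 148) and used as a black box in Proposition~\ref{holom} and in the alternative proofs of Theorems~\ref{theorem4} and~\ref{theorem5}. There is therefore nothing in the paper to compare your argument against.

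That said, your outline is the classical route and is essentially what one finds in N\"orlund's treatise: the residue identity expressing $\Delta^n f(0)/n!$ as a contour integral (the N\"orlund--Rice formula) and the analogous integral representation of the remainder $R_n=f-P_n$, followed by contour deformation and asymptotic estimation. Your remark that the admissible exponential type is constrained by the node spacing is exactly right---in the sharp form one needs $D<\log 2$ for convergence in the full half-plane, a threshold the paper's statement glosses over. If you wanted to write this up fully, the two points that would need the most care are (i) the precise choice and growth of the contours $\gamma_n$ so that the denominator $|z(z-1)\cdots(z-n)|$ dominates $e^{D|z|}$ uniformly, and (ii) the passage from pointwise to locally uniform convergence of $R_n\to 0$. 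But since the paper treats the result as a citation, no proof is expected here.
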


As an application, we propose the following:
\begin{proposition} 1) Every  bounded completely monotone function   $\Psi$ admits an extension  which

(i) is bounded, continuous on the half plane $Re(z)\geq 0$ and  holomorphic  on $Re(z)>0$;

(ii)  is expandable into a Gregory-Newton series on the half plane $Re(z)>0$.

\noindent 2) Every   Bernstein function $\Phi$ admits an extension  which

(i) is continuous on the half plane $Re(z)\geq 0$ and  holomorphic  on $Re(z)>0$;

(ii) satisfies for some  $C,\, D\geq 0$
\begin{equation*}
|\Phi(z)-\Phi(z')| \leq  C+ D\,|z-z'| \quad \mbox{for every}\; z,\,z' \; s.t.\; Re(z)\geq Re(z')\geq 0; \label{norb}
\end{equation*}

(ii)  is expandable into a Gregory-Newton series on the half plane $Re(z)>0$.
\label{holom}\end{proposition}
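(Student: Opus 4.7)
The plan is to construct complex-analytic extensions of $\Psi$ and $\Phi$ directly from their integral representations ((\ref{lap}) and (\ref{berep})), verify the claimed regularity and growth by elementary domination arguments, and then invoke N\"orlund's Theorem \ref{norlund} to obtain the Gregory--Newton expansion.

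For part 1), since $\Psi\in\CM[0,\infty)$ is bounded, Theorem \ref{comf} provides a finite representative measure $\nu$ with $\Psi(\lambda)=\int_{[0,\infty)}e^{-\lambda x}\nu(\dx)$. I would extend by setting
$$\widetilde\Psi(z):=\int_{[0,\infty)}e^{-zx}\,\nu(\dx),\qquad Re(z)\geq 0.$$
Since $|e^{-zx}|\leq 1$ on this half-plane, the integrand is dominated by $1\in L^1(\nu)$, which simultaneously yields the bound $|\widetilde\Psi(z)|\leq\Psi(0)$, continuity on $Re(z)\geq 0$ by dominated convergence, and holomorphy on $Re(z)>0$ by differentiation under the integral sign (the differentiated integrand being dominated by $x e^{-\delta x}$ on every strip $Re(z)\geq\delta>0$). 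Finally, because $\widetilde\Psi$ is bounded, condition (\ref{nor}) holds with $D=0$, so Theorem \ref{norlund} furnishes the Gregory--Newton expansion.

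For part 2), given the L\'evy triplet $(q,d,\mu)$, I would extend by
$$\widetilde\Phi(z):=q+dz+\int_{(0,\infty)}(1-e^{-zx})\,\mu(\dx).$$
The key technical input is the pointwise estimate
$$|1-e^{-wx}|\;\leq\;\min(|w|x,\,2)\;\leq\;(|w|+2)(1\wedge x)\qquad\text{valid for } Re(w)\geq 0,$$
obtained by writing $1-e^{-wx}=w\int_0^x e^{-ws}\,ds$ with $|e^{-ws}|\leq 1$, by the crude bound $|1|+|e^{-wx}|\leq 2$, and then by splitting the two regimes $x\leq 1$ and $x>1$. Since $\mu$ integrates $1\wedge x$, this provides a uniform $L^1$-dominant on every bounded $z$-region, so continuity on $Re(z)\geq 0$ and holomorphy on $Re(z)>0$ of $\widetilde\Phi$ follow exactly as in part 1). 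For the sub-affine increment bound I would write, for $Re(z)\geq Re(z')\geq 0$,
$$\widetilde\Phi(z)-\widetilde\Phi(z')\;=\;d(z-z')+\int_{(0,\infty)}e^{-z'x}\bigl(1-e^{-(z-z')x}\bigr)\,\mu(\dx),$$
use $|e^{-z'x}|\leq 1$, and apply the estimate above with $w=z-z'$; this yields explicitly $C=2\!\int(1\wedge x)\mu(\dx)$ and $D=d+\int(1\wedge x)\mu(\dx)$. Specialising to $z'=0$ gives $|\widetilde\Phi(z)|\leq C'(1+|z|)\leq C'e^{|z|}$, so the hypothesis (\ref{nor}) of N\"orlund's Theorem holds and the Gregory--Newton expansion on $Re(z)>0$ follows.

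The only genuinely technical point is the pointwise inequality on $|1-e^{-wx}|$ and its integrability against the L\'evy measure; once that is in hand, every other assertion reduces to dominated convergence, differentiation under the integral, and a direct application of Theorem \ref{norlund}. No deep complex-analytic argument is required, which is consistent with the fact that the classes $\CM$ and $\BF$ are defined by integral representations that already extend naturally to the closed right half-plane.
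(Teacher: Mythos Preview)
Your proposal is correct and follows essentially the same route as the paper: the paper cites external references (Berg--Forst and Schilling--Song--Vondra\v{c}ek) for the holomorphic extension in (i), whereas you spell out the dominated-convergence/differentiation-under-the-integral argument directly, and for 2)(ii) both you and the paper derive the same pointwise bound $|1-e^{-wx}|\leq (2+|w|)(1\wedge x)$ with the same constants $C=2\int(1\wedge x)\,\mu(\dx)$ and $D=d+\int(1\wedge x)\,\mu(\dx)$, then specialise $z'=0$ to feed N\"orlund's criterion.
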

\begin{proof}  1) Assertion (i) is due to Corollary 9.12 p. 67 \cite{berg2}. Boundedness of the extension of $\Psi$ insures that N\"{o}rlund's condition (\ref{nor}) is satisfied and then (ii) is true.

\noindent 2) Assertion (i) is due to 9.14 p. 68 \cite{berg2} or to Proposition 3.6 p. 25 \cite{SSV}, so that the representation (\ref{berep}) extends on $Re (z) \geq  0$
$$\Phi(z)=q+ d\,z +\int_{(0,\infty)}(1-e^{-z x})\mu(\dx).$$
For 2)(ii), we reproduce some steps   of the Proposition 3.6 p. 25 \cite{SSV}, we observe that for every $x \geq  0$ and $z,\,z'\in
\mathbb{C}$ such that $Re (z)\geq Re (z')\geq 0$, we have
$$|e^{-z x}-e^{-z' x}|\leq |1-e^{-(z-z') x}| \leq 2\wedge |(z-z') x| \leq (2\vee|z-z'|)(1\wedge x)\leq (2+|z-z'|)(1\wedge x).$$
We deduce
\begin{eqnarray*}
|\Phi(z)-\Phi(z')| &\leq& d\,|z-z'| + \int_{(0,\infty)} |e^{-z x}-e^{-z' x}| \mu(\dx)\\
&\leq & d|z-z'| +  (2+|z-z'|)\int_{(0,\infty)} (1\wedge x)\mu(\dx) \\
&=&  C+ D\,|z-z'| \leq  (C\vee D)\, e^{D|z-z'|}.
\end{eqnarray*}
where $C= 2 \int_{(0,\infty)} (1\wedge x)\mu(\dx)$ and $D=d+\int_{(0,\infty)} (1\wedge x)\mu(\dx)$.

2)(iii) is justified as follows: take $z'=0$,  get that $|\Phi(z)|\leq |\Phi(0)|+C+ D\,|z| \leq  \big(C+|\Phi(0)|\big)\vee D)\, e^{D|z|}$ and deduce $\Phi$ satisfies N\"{o}rlund's condition (\ref{nor}).
\end{proof}
\section{The proofs}\label{proof}
\begin{proof}[Proof of Proposition~\ref{proposition1}] (a) For the necessity part, notice that if $c>0$ and $\Psi$ is represented by $\Psi(\lambda)=\int_{(0,\infty)}e^{-\lambda x} \, \mu_\Psi(\dx),\;\lambda >0$, then
$$h(\lambda) := \Psi(\lambda)-\Psi(\lambda+c) =\int_{(0,\infty)}e^{-\lambda x}\,(1-e^{-c x}) \, \mu_\Psi(\dx)$$
since the measure $\mu_h(\dx):=(1-e^{-c x}) \, \mu_\Psi(\dx)$ gives no mass to zero.

For the sufficiency part, take $c>0$ and consider the iterative functional equation $\Psi(\lambda)-\Psi(\lambda+c)=h(\lambda)$ with $h\in \CM(0,\infty)$ represented by
$$h(\lambda)=\int_{(0,\infty)}e^{-\lambda x} \, \mu_h(\dx).$$ We would like to show that $\Psi \in \CM(0,\infty)$, or equivalently (by Remark \ref{clos} (ii)) that $\sigma_c\Psi \in \CM(0,\infty)$. This is the reason why it is sufficient to show that the solution of the iterative functional equation $$\Psi(\lambda)-\Psi(\lambda+1)=h(\lambda)$$
belongs to $\CM(0,\infty)$, i.e. to check things with $c=1$. For this purpose, we  apply Theorem \ref{Webster} with the log-concave function $g(\lambda)=e^{-h(\lambda)},\;\lambda >0$ satisfying $\lim_{\lambda \to \infty}g(\lambda)=1$ and $f(\lambda)=e^{\Psi(\lambda)-\Psi(1)},\;\lambda >0$. We obtain the representation:
$$\Psi(\lambda)-\Psi(1)=h(\lambda) -\sum_{n=1}^\infty \int_{(0,\infty)}e^{-n x}\, (1-e^{-\lambda x})\mu_h(\dx)=\int_{(0,\infty)}  (e^{-\lambda x}-e^{- x})\mu_h(\dx),$$
which insures that $\Psi$ is differentiable with $-\Psi'\in \CM(0,\infty)$. Because $\Psi$ is non-negative, we conclude that  $\Psi\in \CM(0,\infty)$.

Statement (b) could be extracted from the second proof that follows.
\end{proof}
\begin{proof}[Second proof of the sufficiency part of Proposition~\ref{proposition1}]  Fix $c>0$ and  write for every $n\in
\No$ and $\lambda >0$,
$$(-\Delta_{nc})\Psi(\lambda)=\Psi(\lambda) - \Psi(\lambda +nc)= \sum_{i=0}^{n-1}\Psi(\lambda+ic) - \Psi(\lambda +(i+1)c)= \sum_{i=0}^{n-1}(-\Delta_c)\Psi(\lambda+ic)$$
Obviously, the sequence $n \mapsto (-\Delta_{nc})\Psi(\lambda)$ is increasing for every $\lambda, \, c>0$, then  $x\mapsto \Psi(x)$ is decreasing and then converging, since non-negative. We denote $\Psi(\infty):=\lim_{x\to
\infty} \Psi(x)$. The function $\lambda \mapsto (-\Delta_{nc})\Psi(\lambda)$ belongs
to $\CM(0,\infty)$ and,  by Corollary 1.7 p. 6 in \cite{SSV}, the limiting function $(-\Delta_{\infty,c})\Psi:=\lim_{n\to \infty}
(-\Delta_{nc})\Psi$ also belongs to $\CM(0,\infty)$, the convergence holds locally uniformly and also for the derivatives. This limit does not depend on $c>$ since it satisfies:
$$\Psi(\lambda) =\Psi(\infty) +(-\Delta_{\infty,c})\Psi(\lambda),\quad \lambda >0.$$
\end{proof}
\begin{proof}[Proof of Proposition~\ref{proposition2}] 1) If $\Phi \in \BF$ is represented by (\ref{berep}), then for every  $c>0$,
$$\lambda\mapsto  \Delta_c\Phi(\lambda)= \Phi(\lambda+c)- \Phi(\lambda)= dc+\int_{(0,\infty)} e^{-\lambda x} (1-e^{-c
x})\mu(\dx),\quad\lambda \geq 0,$$
is non-negative and belongs to $\CM[0,\infty)$. By Remark \ref{r1} (iii) we deduce that $\Phi\in \CA[0,\infty)$.

Conversely, assume $\Phi \in \CA[0,\infty)$ and non-negative, we will show that $\Phi$ is differentiable and that $\Phi'$ in completely monotone on $(0,\infty)$ which is equivalent to $\Phi \in \BF$. Remark \ref{r1} (iii) and definiteness of $\Phi$ in zero yield to
$$\lambda\mapsto  \Delta_c\Phi(\lambda)= \Phi(\lambda+c)- \Phi(\lambda) \in \CM[0,\infty),\quad \forall c>0.$$
Inspired by the proof of Proposition~\ref{proposition1}, we will see, that  $\Delta\Phi \in \CM(0,\infty)$ (i.e. when taking $c=1$) is sufficient for proving that $\Phi$ is differentiable and that $\Phi'\in \CM(0,\infty)$. Indeed, assume $\Delta\Phi$ is the Laplace representation $\mu$
$$\Delta\Phi(\lambda)=\int_{[0,\infty)}e^{-\lambda x} \mu(\dx).$$
Theorem \ref{Webster} insures that $f(\lambda)=e^{\Phi(1)-\Phi(\lambda)}$ is the unique solution of the iterative functional equation $f(\lambda+1) =f(\lambda) g(\lambda)$ and $\Phi(1) -\Phi(\lambda)$ has the following representation for every $\lambda >0$:
\begin{eqnarray*}
\Phi(1) -\Phi(\lambda)&=&\Delta\Phi(\lambda)  -\sum_{n=1}^\infty \Delta\Phi(n)-\Delta\Phi(n+\lambda)\\
&=&\int_{[0,\infty)}e^{-\lambda x}\,\mu(\dx)-\sum_{n=1}^\infty \int_{(0,\infty)}(e^{-n x}- e^{-(n+\lambda) x})\,\mu(\dx)\\
&=&\mu(\{0\})+\int_{(0,\infty)}e^{-\lambda x}\,\mu(\dx)- \int_{(0,\infty)}  (1-e^{-\lambda x})\,\frac{e^{-x}}{1-e^{-x}}\, \mu(\dx)\\
&=&\mu(\{0\})+\int_{(0,\infty)} \frac{e^{-x}-e^{-\lambda x}}{1-e^{-x}} \mu(\dx).
\end{eqnarray*}
Then, for every $a>0$, $\lambda \mapsto \Phi(\lambda +a)-\Phi(a)=\int_{(0,\infty)}(1-e^{-\lambda x})\,\frac{e^{-ax}}{1-e^{-x}}\,\mu(\dx)$
is a Bernstein function  which is equivalent, by (\ref{pc}) to $\Phi \in \BF$.

2) The proof is conducted identically by dropping the positivity condition on $\Phi$.
\end{proof}
\begin{proof}[Proof of Proposition~\ref{proposition3}]  If $\Phi \in \BF$ is represented by (\ref{berep}) and if $c>0$, then
$$\lambda \mapsto \theta_c\Phi(\lambda)= \Phi(c)- \Phi(0)+\Phi(\lambda)-\Phi(\lambda+c)=\int_{(0,\infty)}(1-e^{-\lambda x})(1-e^{-cx})\mu(\dx).$$
We deduce that $\theta_c\Phi \in \BF_b^0$ since $(1-e^{-c x})\mu(\dx)$ is a measure with finite total mass equal to
$\Phi(c)-\big(\Phi(0) + d\,c\big)$.

Conversely, assume  $\lambda \mapsto \theta_c\Phi(\lambda)= \big[\Phi(c)- \Phi(0)\big]-\big[\Phi(\lambda+c)-\Phi(\lambda)]\in \BF_b^0$. The latter is equivalent by (\ref{bcm}) to $\lambda \mapsto \big[\Phi(\lambda+c)-\Phi(\lambda)]\in \CM[0,\infty)$ and we conclude as in the proof of Proposition~\ref{proposition2}.

Statement (b) could be extracted from the second proof that follows.
\end{proof}
\begin{proof}[Second proof of of the sufficiency part of Proposition~\ref{proposition3}]  Because of the invariance (\ref{bc}), it is enough to prove the Proportion in case where $c=1$. Since $\theta\Phi$ belongs to $\BF_b^0$, then it is represented with a finite measure $\mu $ on $(0,\infty)$ by
\begin{equation*}\label{d0}
\theta_c\Phi(\lambda)= \int_{(0,\infty)}(1-e^{-\lambda x})\,\mu_c(\dx),\quad \lambda \geq 0.
\end{equation*}
We will see that the latter is sufficient to show that $\phi$ is differentiable on $(0,\infty)$ and that $\Phi'$ belongs to $\CM(0,\infty)$. First notice that for every $n\in \NN$ and $\lambda \geq 0$,
\begin{eqnarray*}
\theta_{nc}\Phi(\lambda)&=& \big[\Phi(nc) -\Phi(0)\big]-\big[\Phi(\lambda +nc)-\Phi(\lambda)\big]\\
&=&\sum_{k=0}^{n-1}\big[\Phi((k+1)c) -\Phi(kc)\big]-\big[\Phi(\lambda +(k+1)c)-\Phi(\lambda+kc)\big]\\
&=&\sum_{k=0}^{n-1}\Big\{\big[\Phi(c) -\Phi(0)\big]-\big[\Phi(\lambda +(k+1)c)-\Phi(\lambda+kc)\big]\Big\}\\
&& \qquad\qquad -\Big\{\big[\Phi(c) -\Phi(0)\big]-\big[\Phi((k+1)c)-\Phi(kc)\big]\Big\}\\
&=&\sum_{k=0}^{n-1} \theta_c\Phi(\lambda+kc) -\theta_c\Phi(kc)\\
&=&\sum_{k=0}^{n-1}\int_{(0,\infty)}(1-e^{-\lambda x})\, e^{-kc x}\,\mu_c(\dx)\\
&=&\int_{(0,\infty)}(1-e^{-\lambda x})\,\frac{1-e^{-nc x}}{1-e^{-c x}}\mu_c(\dx).
\end{eqnarray*}
By Corollary 3.9 p. 29 \cite{SSV}, the sequence $\theta_{nc}\Phi$ converges locally uniformly, and all its derivatives to a Bernstein function $\theta_{\infty,c}\Phi$ given by
$$\lambda \mapsto \theta_{\infty,c}\Phi(\lambda)=\int_{(0,\infty)} \,\frac{1-e^{-\lambda x}}{1-e^{- x}} \mu_c(\dx)\in\BF.$$
We have also showed that for every $\lambda\geq 0$,
$\Phi(nc)-\Phi(\lambda +nc)\to  \theta_{\infty,c}\Phi(\lambda)
+\Phi(0)-\Phi(\lambda)$, when $n\to \infty$. On the other hand, by (\ref{conv}), we get that for every $\lambda\geq 0$
$$\lim_{x\to \infty}\Phi(\lambda +x) -\Phi(x)= d_c\lambda, \quad \mbox{for some}\; d_c\geq 0$$
and  we deduce that,
$$ \Phi(\lambda)=   \Phi(0)+d_c \lambda+\theta_{\infty,c}\Phi(\lambda), \quad \lambda\geq 0.$$
Unicity of the triplet of characteristics in the representation (\ref{berep}) of Bernstein functions allows to conclude that $d_c=\lim_{x\to \infty}\Phi(x)/x$ and  $\theta_{\infty,c}$, both  do not depend on $c$.
\end{proof}
\begin{proof}[Proof of Theorem~\ref{theorem4}] For the necessity part, use Proposition \ref{holom} for (a) and  Theorem \ref{theorem3}  for (b).
For the sufficiency part, use Theorem \ref{theorem3} again which asserts that there is a unique finite measure $\mu$ on $[0,\infty)$
such that
$$\Psi(k)=\int_{[0,\infty)} e^{-k x} \,\mu(\dx),\quad \forall k\in\NN.$$
The finiteness of each term $\Psi(k),\,k\in \NN$ allows to define the function
$$\overline{\Psi}(\lambda):=\int_{[0,\infty)} e^{-\lambda x} \,\mu(\dx),\quad \lambda\geq 0.$$
Since   $\overline{\Psi}(k)=\Psi(k)$ for every $k\in \NN$, and since the extensions on $Re(z)>0$ of both functions $\Psi$ and $\overline{\Psi}$  are holomorphic and bounded, then Blaschke's argument given in Theorem \ref{blas} insures that the extensions of $\Psi$ and  $\overline{\Psi}$   are equal on  $Re(z)>0$. We deduce that  $\Psi$ and  $\overline{\Psi}$ coincide on $(0,\infty)$ and, by continuity in zero, also on $[0,\infty)$.
\end{proof}
\begin{proof}[Alternative Proof of Theorem~\ref{theorem4}]  We conclude as in the last proof without the use of Blaschke's argument.  Because the extensions on $Re(z)>0$ of both functions $\Psi$ and $\overline{\Psi}$  are holomorphic and bounded, they are, by Proposition \ref{holom} expandable into  Gregory-Newton series as in (\ref{newton}).  Since ${\left(\Psi(k)\right)}_{k\geq 0}={\left(\overline{\Psi}(k)\right)}_{k\geq 0}$ and  the sequences ${\left(\Delta^k\Psi(0)\right)}_{k\geq 0}$ and ${\left(\Psi(k)\right)}_{k\geq 0}$ entirely determine each other by (\ref{una}), we conclude that $\Delta^k\Psi(0)=\Delta^k\overline{\Psi}(0)$ for all $k\in \NN$. Finally, $\overline{\Psi}$ and $\Psi$ have the same expansion (\ref{newton}) and then are equal.
\end{proof}
\begin{proof}[Proof of Corollary~\ref{corollary1}] For the necessity part, do as in the proof of Theorem~\ref{theorem4}. For the sufficiency part, notice that the sequence of functions  $\tau_{\epsilon_n}\Psi(\lambda)=\Psi(\epsilon_n+\lambda) ,\, \lambda \geq 0$, satisfy the conditions of Theorem \ref{theorem4} and converge to $\Psi$.
One concludes with Remark \ref{clos} (ii).
\end{proof}
\begin{proof}[Proof of Corollary~\ref{corollary2}] The necessity part is obvious. For the sufficiency part, consider two functions $\Psi_1$ and $\Psi_2$ in $\CM(0,\infty)$, represented by their measures $\nu_1$ and $\nu_2$, and coinciding on $\{n_0,n_0+1,\cdots\}$ for some $n_0\in \NN$. By construction, the well defined functions on $[0,\infty)$,  $\tau_{n_0}\Psi_1(\lambda)$ and $\tau_{n_0}\Psi_2$, coincide on $\NN$. Using Remark \ref{1a1} and imitating the end of the proof of Theorem \ref{theorem4},  conclude that $\tau_{n_0}\Psi_1$ and $\tau_{n_0}\Psi_2$  are equal, that is $$\int_{[0,\infty)}e^{-\lambda x}\, e^{-n_0\,x} \nu_{1}(\dx)= \int_{[0,\infty)}e^{-\lambda x}\,e^{-n_0\,x}\nu_{2}(\dx),\quad \forall \lambda \geq 0$$
By injectivity of Laplace transform, conclude that the measures $e^{-n_0\,x} \nu_{1}(\dx)$ and $e^{-n_0\,x} \nu_{2}(\dx)$ are equal and so are $\nu_{1}$ and $\nu_{2}$. One can also use   the  Gregory-Newton expansion argument as in the alternative proof of Theorem~\ref{theorem4}. Now, assume $\Psi_1 (0+)<\infty$ (that is $\Psi_1 \in \CM[0,\infty)$), then, by continuity, necessarily $\Psi_1 (0+)=\Psi_2 (0+)$ and $\Psi_1= \Psi_2 $ on $[0,\infty)$.
\end{proof}
\begin{proof}[Proof of Proposition~\ref{proposition4}] The necessity part is obvious by Remark \ref{clos} (ii), we tackle the sufficiency part. Using continuity in zero, it is enough to prove that $\Psi$ is completely monotone on $(0,\infty)$. We fix $\lambda>0$ and  denote $[x]$ the integer part of the real number $x$. Notice that $\alpha_n[\frac{\lambda}{\alpha_n}]$ is smaller than $\lambda$ and tends to $\lambda$ when $n$ goes to infinity. We claim that
\begin{equation}\label{uniform}
e_n(\lambda, u):= e^{-\alpha_n[\frac{\lambda}{\alpha_n}]\,u}\longrightarrow e^{-\lambda\,u}, \;\,\mbox{uniformly in $u\geq 0$, when $n\to \infty$}.
\end{equation}
Indeed, using the inequality
$$a\,e^{-a} \leq 1 \quad\mbox{and}\quad 0\leq e^{-a}-e^{-b}=\int_a^b e^{-u} \,du\leq (b-a)\, e^{-a},\quad 0\leq a \leq b,$$
we have, for every integer $n$  such that $\alpha_n <\lambda$ and $u\geq 0$, that
$$0\leq e_n(\lambda, u)-  e^{-\lambda\,u} \leq \alpha_n\, u \,\left(\frac{\lambda}{\alpha_n}-[\frac{\lambda}{\alpha_n}] \right)\, e^{- \alpha_n [\frac{\lambda}{\alpha_n}] \,u} \leq \alpha_n\, u \, e^{- \alpha_n [\frac{\lambda}{\alpha_n}] \,u}\leq \frac{1}{[\frac{\lambda}{\alpha_n}]} \leq
\frac{\alpha_n}{\lambda-\alpha_n }.$$

Now, by assumption, we have
$$\Psi\Big(\alpha_n[\frac{\lambda}{\alpha_n}]\Big) =  \Psi_n\Big([\frac{\lambda}{\alpha_n}]\Big)=\int_{[0,\infty)} e^{-[\frac{\lambda}{\alpha_n}]u}\, \nu_n(\du)= \int_{[0,\infty)} e_n(\lambda, v)\, \widetilde{\nu}_n(\dv), $$
where $\nu_n$ is the representative measure of $\Psi_n$ and  $\widetilde{\nu}_n$ is the finite measure with total mass $\widetilde{\nu}_n\big(\big[0,\infty))=\Psi(0)$,  image of $\nu_n$ by  the change of variable $u=\alpha_n v$. Continuity of $\Psi$ yields
$$\Psi(\lambda)=\lim_{n\to \infty}\Psi\Big(\alpha_n[\frac{\lambda}{\alpha_n}]\Big) = \lim_{n\to \infty} \int_{[0,\infty)} e_n(\lambda, u)\,
\widetilde{\nu}_n(\du)$$ and  Helly's selection theorem, insures that there exist a subsequence $\left(\widetilde{\nu}_{n_p}\right)_{p\geq 0}$ and a finite measure $\nu$ on $[0,\infty)$ such that $\widetilde{\nu}_{n_p}$ converges vaguely (and also weakly) to $\nu$. Taking the limit along the subsequence $n_p$ and thanks to  the uniformity in (\ref{uniform}), we get
$$\Psi(\lambda)=\int_{[0,\infty)} e^{-\lambda\,u}\, \nu(\du).$$
\end{proof}
\begin{proof}[Proof of Corollary~\ref{corollary3}] Since  $(a) \Longrightarrow (b)$ is justified by Remark \ref{clos} (ii) and $(b) \Longrightarrow (c)$ is immediate, we just need to prove $(c) \Longrightarrow (a)$. In case where $\Psi(0+)<\infty$,  Proposition \ref{proposition4} directly applies. In case where $\Psi(0+)=\infty$, we claim that for every fixed $m\in\No$, the function
$$\tau_{\frac{1}{m}}\Psi(\lambda) =\Psi(\frac{1}{m}+\lambda), \;\lambda \geq 0,$$
satisfies the condition  of Proposition \ref{proposition4}. Indeed, $\tau_{\frac{1}{m}}\Psi$  is continuous and, by assumption, there exists
for each $n\in \No$, a function $\Psi_{mn}\in \CM[0,\infty)$, associated to a  measure $\nu_{nm}$  with finite total mass $\nu_{nm}\big([0,\infty))=\Psi(1/m)$, such that for every $l\in \NN$,
\begin{eqnarray}
\tau_{\frac{1}{m}}\Psi(\frac{l}{n})&=&\Psi(\frac{n+ml}{mn})=\Psi_{mn}(n+ml)=
\int_{[0,\infty)}
e^{-(n+ml)\,u}\, \nu_{nm}(\du)\nonumber\\
&=& \int_{[0,\infty)} e^{-\frac{n+ml}{mn}\,v}\, \widetilde{\nu}_{n,m}(\dv)
=\int_{[0,\infty)} e^{-lv}\,\overline{\nu}_{n,m}(\dv)\label{uv}
\end{eqnarray}
where $\widetilde{\nu}_{n,m}$ is the image of $\nu_{nm}$ by the change of variable $u=\frac{v}{n}$.  Taking $l=0$ in  (\ref{uv}), it is immediate that the measure $$\overline{\nu}_{n,m} (\dv):=e^{-\frac{v}{m}} \, \widetilde{\nu}_{n,m}(\dv)$$ is also a measure  with finite total mass $\overline{\nu}_{n,m}\big([0,\infty)\big)=\Psi(\frac{1}{m})$.

It is now evident, by proposition \ref{proposition4}, that for every $m$, the function $\tau_{\frac{1}{m}}\Psi$ is completely monotone on
$[0,\infty)$ for every $m\in \No$. Using Remark \ref{clos} (ii), we conclude that $\Psi \in \CM(0,\infty)$.
\end{proof}
\begin{proof}[Proof of Theorem~\ref{theorem5}] We tackle the proof with the necessity part:  the holomorphy condition (i) is in Proposition \ref{holom}
and the second condition stems from Theorem \ref{WA}. Proof of the sufficiency part  is based on Blaschke's result stated in Corollary  \ref{blas}, used with some care, because Bernstein function are not bounded in general. By Proposition \ref{proposition3}, it is enough to check whether the function
$$\lambda \mapsto \theta\Phi(\lambda):=\Delta_1\Phi(0) -\Delta_1\Phi(\lambda)=\Phi(1)-\Phi(0)+\Phi(\lambda)-\Phi(\lambda+1)$$
belongs to $\BF_b^0$ in order to show that $\Phi \in \BF$.  We argue as follows:

1-   representation (\ref{refor}) gives
$$\Phi(k)= q + dk+ \int_{(0,\infty)} (1-e^{-kx})\mu(\dx),\quad  k\in \NN,$$
and allows to define the function
\begin{equation*}\label{pio}
\overline{\Phi}(\lambda)= q + d\lambda+ \int_{(0,\infty)} (1-e^{-\lambda x})\mu(\dx),\quad  \lambda\in [0,\infty),
\end{equation*}
and then, by Proposition \ref{proposition3}, $\,\theta\overline{\Phi} \in \BF_b^0$;

2- the sequences ${\big(\theta\Phi(k)\big)}_{k\geq 0}$ and ${\big(\theta\overline{\Phi}(k)\big)}_{k\geq 0}$ are equal;

3- boundedness condition in (a)  yields boundedness of the function the extension of  $\theta\Phi $, boundedness of the function the extension of  $\theta\overline{\Phi}$ stems from Proposition \ref{holom};

4- Corollary \ref{blas} insures  that the extensions of the functions $\theta\Phi$ and $\theta\overline{\Phi}$ are equal on $(0,\infty)$ and also on since $\theta\Phi(0)=\theta\overline{\Phi}(0)=0$. Then, $\theta\Phi\in \BF_b^0$.
\end{proof}
\begin{proof}[Alternative proof of Theorem~\ref{theorem5}] As in the alternative proof of  Theorem~\ref{theorem4}, Gregory-Newton expansion approach  works. Do as in the proof Theorem~\ref{theorem5} until point 3- and use Proposition \ref{holom} to conclude in a  point 4- that both extensions of  $\theta\Phi$ and $\theta\overline{\Phi}$ share the Gregory-Newton expansion and then are equal.
\end{proof}
\begin{proof}[Proof of Proposition~\ref{proposition5}] The necessity part is comes from (\ref{bc}). The sufficiency part is an adaptation of
the proof of Proposition \ref{proposition4}. From (\ref{uniform}), we have
$$1-e_n(\lambda, u)= 1-e^{-\alpha_n[\frac{\lambda}{\alpha_n}]\,u}\longrightarrow 1- e^{-\lambda\,u} \;\,\mbox{uniformly in $u\geq 0$ when $n\to \infty$.}$$
Notice that
$$\Phi\Big(\alpha_n[\frac{\lambda}{\alpha_n}]\Big) =  \Phi_n\Big(\alpha_n[\frac{\lambda}{\alpha_n}]\Big)= \int_{[0,\infty)} \big(1-e^{-\left[\frac{\lambda}{\alpha_n}\right] u})\big)\, \mu_n(\du),$$
where $\mu_n$ is the representative  measure of $\Psi_n$. By  the change  variable $u=\alpha_n v$,  the representation
$$\Phi\Big(\alpha_n[\frac{\lambda}{\alpha_n}]\Big) =\int_{[0,\infty)} \big(1-e_n(\lambda, u)\big)\, \widetilde{\mu}_n(\du)$$
holds true where $\widetilde{\mu}_n$  being  a finite measure  with total mass $\widetilde{\mu}_n\big((0,\infty)\big)=\lim_{\lambda \to \infty}\Phi(\lambda)<\infty$ due to the monotone convergence theorem applied along $\lambda \to \infty$.  The rest of the proof is continued exactly as in proof of Proposition \ref{proposition4} through the limit $\Phi(\lambda)= \lim_{n\to \infty} \Phi\left(\alpha_n[\lambda/\alpha_n]\right).$
\end{proof}
\begin{proof}[Proof of Corollary~\ref{corollary5}] The implication $(a) \Longrightarrow (b)$ is justified by (\ref{bc}) and $(b) \Longrightarrow (c)$
being immediate, we just need to prove $(c) \Longrightarrow (a)$. By In order to show that $\Phi \in \BF$, it is enough, by Proposition
\ref{proposition3}, to check that for every fixed $m\in\No$, the function
$$\theta_{\frac{1}{m}}\Phi(\lambda)=\Phi(\frac{1}{m})-\Phi(0)+\Phi(\lambda)-\Phi(\frac{1}{m}+\lambda), \;\lambda \geq 0,$$
belongs to $\BF_b^0$. By assumption there exists  for each $n\in \No$,  a function $\Phi_{mn}\in \BF$, having triplet of characteristics $(q_{mn}, d_{mn},\mu_{mn})$, such that the following representation holds true for all $k\in \NN$:
\begin{eqnarray}
\Phi(\frac{1}{m}+\frac{k}{n})- \Phi(\frac{k}{n})&=& \Phi(\frac{mk+n}{mn})- \Phi(\frac{mk}{mn}) =  \Phi_{mn} (mk+n ) -\Phi_{mn}(mk)\nonumber\\
&=& d_{mn} \,n + \int_{(0,\infty)} e^{-mk u}\,(1-e^{-n
u})\,\mu_{mn}(\du).\label{0}
\end{eqnarray}
Representation (\ref{0}) shows that the sequence $k\mapsto \Phi(\frac{1}{m}+\frac{k}{n})- \Phi(\frac{k}{n})$ is positive and decreasing then is converging. Similarly, we have
\begin{eqnarray}
\theta_{\frac{1}{m}}\Phi(\frac{k}{n})&=&\Phi(\frac{1}{m})-\Phi(0)+\Phi(\frac{k}{n})-\Phi(\frac{1}{m}+\frac{k}{n}) \label{1}\\
&=&\Phi(\frac{n}{mn})-\Phi(0)+\Phi(\frac{mk}{mn})-\Phi(\frac{mk+n}{mn})\nonumber\\
&=&\Phi_{mn}(n)-\Phi_{mn} (0)+\Phi_{mn}(mk)- \Phi_{mn} (km+n )\nonumber\\
&=&\int_{(0,\infty)}(1-e^{-km u})\,(1-e^{-n
u})\,\mu_{mn}(\du).\nonumber
\end{eqnarray}
Making the change of variable $u=v/m$ in (\ref{4}), we retrieve with the image $\widetilde{\mu}_{mn}$ of $\mu_{mn}$ that
\begin{equation}\theta_{\frac{1}{m}}\Phi(\frac{k}{n})=\int_{(0,\infty)}(1-e^{-k u})\,(1-e^{-\frac{n}{m}u})\,\widetilde{\mu}_{mn}(\du),\quad \forall k\in \NN. \label{4}\end{equation}
Representation (\ref{1}) and continuity of $\theta_{\frac{1}{m}}\Phi$ justifies that $\lim_{x\to \infty }\theta_{\frac{1}{m}}\Phi(x) = \lim_{k\to \infty } \theta_{\frac{1}{m}}\Phi(\frac{k}{n})$ is finite. Then, the monotone convergence theorem insures applied in (\ref{4}) gives that
$$\int_{(0,\infty)} (1-e^{-\frac{n}{m}u})\,\widetilde{\mu}_{mn}(\du)= \lim_{x\to \infty }\theta_{\frac{1}{m}}\Phi(x).$$
i.e. the measure $\,(1-e^{-\frac{ u}{m}})\,\widetilde{\mu}_{mn}(\du)$ is finite with total mass $\lim_{x\to \infty }\theta_{\frac{1}{m}}\Phi(x)$. We conclude that $\theta_{\frac{1}{m}}\Phi$ satisfies the condition of Proposition \ref{proposition5} and then belongs to $\BF_b^0$.
\end{proof}
\section{Bernstein Self-decomposability property of  functions is also recognized by their restriction on $\NN$} \label{self}
During the redaction of this paper,  we felt it important to clarify the probabilistic notion of  infinite divisibility and self-decomposability of non0-negative random variables. The probabilistic point of view is well presented in the book Steutel and van Harn in \cite{steutel}. Every Bernstein function $\Phi$, null in zero, is the cumulant function (i.e. Laplace exponent) of an {\it infinitely divisible} non-negative random variable $Z$, i.e.
$$\er[e^{-\lambda Z}]:=\int_{[0,\infty)} e^{- \lambda x}\,\mathbb{P}(Z\in \dx)= e^{-\Phi(\lambda)},\quad \lambda \geq 0. $$
The latter is equivalent to the existence,  for every integer $n$, of non-negative i.i.d random variables  $Z_1^n, \cdots, Z_n^n$ such that
$Z \simdis Z_1^n + \cdots +Z_n^n,$ or also to the fact that  the function
$$\lambda \mapsto \left(\er[e^{-\lambda Z}]\right)^t \quad \mbox{is completely monotone for every $t>0$}.$$

In \cite{steutel}, Steutel and van Harn present class of non-negative self-decomposable r.v.'s by those random variables $X$, such that
for every $c \in (0,1)$, the function
\begin{equation}\label{mp}
\lambda \mapsto \Psi_c(\lambda)=\er[e^{-\lambda X}]/\er[e^{-c\lambda
X}],
 \end{equation}
belongs to $\CM[0,\infty)$. The latter is equivalent to the  existence,  for each $c\in (0,1)$, of a r.v. $Y_c$  independent from $X$ such that the folloowing identity in law  holds true
\begin{equation*}\label{xcy}
X\simdis c\, X +  Y_c
\end{equation*}
Necessarily the r.v. $X$ is infinitely divisible and  is called a self-decomposable r.v. Its  cumulant function  $\Phi(\lambda)=-\log \er[e^{-\lambda X}],\;\lambda\geq 0$ (necessarily differentiable) satisfies  (\ref{mp}) or  equivalently it satisfies  $3)(b)$ in Proposition \ref{proposition6} below, for this reason,   $\Phi$ is called a self-decomposable Bernstein function.  Another characterization of $\Phi$ is a specification  of the form (\ref{berep}) with $q=0$ and the L\'evy measure of the form $\nu({\rm d}x)=x^{-1}k(x){\rm d}x, \, x>0$ with $k$ a decreasing function (see \cite{sato} for more account).

We denote $\CF$  the class of cumulant functions of probability measures, i.e.:
$$\CF := \{\lambda \mapsto \phi(\lambda)= -\log \er[e^{-\lambda Z}]= -\log\int_{[0,\infty)} e^{-\lambda x} \, \pr(Z\in \dx),\  \;
Z \;\mbox{\it a non-negative r.v.}\}\,.$$
\begin{remark} It is clear that

(i)  $\CF$ is stable by addition (it stems from the addition of independent random variables), is closed under pointwise limits (this is the convergence in distribution) and also stable by the operators $\sigma_c$ and $\tau_c$ introduced in Section \ref{basic}.

(ii) $\Phi \in \BF$ if and only if $t \big(\Phi -\Phi(0)\big)\in \CF\,$ for every $t>0$. $\phi \in \CF$ if and only if $1-e^{-\phi} \in \BF_b$. The latter yields  $\Phi \in \CF$ if and only if $1-e^{-t\Phi} \in \BF_b\,$ for every $t>0$.

(iii) Observe that $\Phi \in \BF$ if and only if  $(1-e^{-\epsilon_n \Phi})/\epsilon_n  \in \BF_b$ for some positive sequence $\epsilon_n$ tending to zero.  To see the claim, use closure property under pointwise limits of $\BF$ (Corollary 3.9 p. 29 in \cite{SSV}) together with   $\Phi=\lim_{n\to \infty} \big(1-e^{- \epsilon_n\Phi}\big)/ \epsilon_n$.  One can deduce that $\Phi$ belongs to $\BF$ if and only if $\epsilon_n \Phi$ belongs to $\CF$
for some positive sequence $\epsilon_n$ tending to zero.
\label{cf}\end{remark}

We have the following useful  result related to (\ref{bc}):
\begin{proposition} Let $\Phi: [0,\infty)\longrightarrow \RP$ and $\rho_c\Phi(\lambda):=(\sigma-\sigma_c)\Phi(\lambda)= \Phi(\lambda)-\Phi(c\lambda)$, $c\in (0,1)$.

\noindent 1) If $\Phi$  is continuous at the neighborhood of $0$ and $\rho_c\Phi  \in \CF$ (respectively $\BF$) for some $c\in(0,1)$, then $\Phi$ belongs to $\CF$ (respectively $\BF$).

\noindent 2) Assume $\Phi$ is continuous at the neighborhood of $0$, then the following assertions are equivalent:

(a) $\rho_c\Phi \in \BF$ for every $c\in(0,1)$;

(b) $\rho_c\Phi \in \CF$ for every $c\in(0,1)$;

(c)  $\Phi$ is differentiable on $(0,\infty)$ and $\lambda \mapsto \lambda \Phi'(\lambda) \in\BF$.
\label{proposition6}\end{proposition}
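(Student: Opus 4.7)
The plan for part 1) is pure telescoping. The identity
\[
\Phi(\lambda) - \Phi(c^n \lambda) = \sum_{k=0}^{n-1} \sigma_{c^k}(\rho_c \Phi)(\lambda)
\]
combined with the continuity of $\Phi$ at $0$ and the fact that $c^n\lambda \to 0$ since $c\in(0,1)$, yields the monotone identity $\Phi(\lambda)-\Phi(0)=\sum_{k\geq 0}\sigma_{c^k}(\rho_c\Phi)(\lambda)$, the summands being non-negative because $\rho_c\Phi\geq 0$. If $\rho_c\Phi\in\BF$, equivalence (\ref{bc}) places each $\sigma_{c^k}(\rho_c\Phi)$ in $\BF$, and closure of $\BF$ under pointwise limits (Corollary 3.9 p. 29 \cite{SSV}), together with $\Phi(0)\geq 0$, delivers $\Phi\in\BF$. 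The $\CF$ version is strictly parallel, invoking the stability of $\CF$ under $\sigma_c$, addition and pointwise limits listed in Remark \ref{cf} (i) (the additive constant $\Phi(0)$ being absorbed at the final step).

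For part 2), the plan is to prove the cycle (c) $\Rightarrow$ (a) $\Rightarrow$ (b) $\Rightarrow$ (c). For (c) $\Rightarrow$ (a), setting $\Psi(\lambda):=\lambda\Phi'(\lambda)\in\BF$ and changing variable $\mu = s\lambda$,
\[
\rho_c\Phi(\lambda)=\int_{c\lambda}^{\lambda}\Phi'(\mu)\,d\mu = \int_c^1 \frac{\sigma_s\Psi(\lambda)}{s}\,ds,
\]
which displays $\rho_c\Phi$ as a Riemann integral of Bernstein functions (each $\sigma_s\Psi \in \BF$ by (\ref{bc}), with non-negative weight $1/s$); $\BF$ being a convex cone closed under pointwise limits, this keeps $\rho_c\Phi \in \BF$. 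The implication (a) $\Rightarrow$ (b) is immediate: applying Remark \ref{cf} (ii) to the Bernstein function $\rho_c\Phi$, which is null at $0$, gives $t\,\rho_c\Phi\in\CF$ for every $t>0$, and in particular $\rho_c\Phi\in\CF$.

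The main work is in (b) $\Rightarrow$ (c). Part 1) already ensures $\Phi\in\CF$, so $\Phi=-\log\er[e^{-\lambda X}]$ is $C^\infty$ on $(0,\infty)$, hence differentiable. Fix $t>0$; for each integer $n$ with $t/n<1$, $\rho_{1-t/n}\Phi\in\CF$ by hypothesis, and since $n\in\No$, the $n$-fold dilation $n\,\rho_{1-t/n}\Phi$ also lies in $\CF$ (it is the cumulant function of the $n$-fold convolution of the underlying law). A first order expansion gives
\[
n\,\rho_{1-t/n}\Phi(\lambda) = n\bigl[\Phi(\lambda)-\Phi((1-t/n)\lambda)\bigr] \longrightarrow t\lambda\Phi'(\lambda)=t\Psi(\lambda), \quad n\to\infty.
\]
Pointwise closure of $\CF$ (Remark \ref{cf} (i)) upgrades the limit to $t\Psi\in\CF$ for \emph{every} $t>0$; since $\Psi(0)=0$, Remark \ref{cf} (ii) then converts this into $\Psi\in\BF$, which is precisely (c).

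The critical subtlety, and what I expect to be the main obstacle, is that $\CF$ is only stable under multiplication by positive \emph{integers} (via convolution of i.i.d.\ copies), not by arbitrary positive reals. The triangular-array construction $c=1-t/n$ with $n\in\No$ is the device that bypasses this restriction: for each fixed $t>0$, letting $n\to\infty$ turns integer-only stability into stability under the real parameter $t$ for the limit $t\Psi$, thereby unlocking Remark \ref{cf} (ii) and pinning down the Bernstein property of $\Psi$.
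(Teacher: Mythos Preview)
Your proof is correct, and for part 1), (c)$\Rightarrow$(a) and (a)$\Rightarrow$(b) it is essentially identical to the paper's argument (same telescoping, same integral representation, same appeal to Remark~\ref{cf}~(ii)).

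The only genuine difference is in (b)$\Rightarrow$(c). The paper passes immediately from $\CF$ to $\BF$ via the transformation $\phi\mapsto 1-e^{-\phi}$ of Remark~\ref{cf}~(ii): from $\rho_c\Phi\in\CF$ it gets $\bigl(1-e^{-\rho_c\Phi}\bigr)/(1-c)\in\BF$, then lets $c\to 1^-$ and uses closure of $\BF$ to obtain $\lambda\Phi'(\lambda)\in\BF$ in one stroke. You instead stay inside $\CF$ longer: using integer convolution powers to form $n\,\rho_{1-t/n}\Phi\in\CF$, passing to the limit $t\Psi\in\CF$ for every $t>0$, and only then invoking Remark~\ref{cf}~(ii) to conclude $\Psi\in\BF$. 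Both routes hinge on the same two ingredients (closure under pointwise limits and the $\CF\leftrightarrow\BF$ dictionary of Remark~\ref{cf}~(ii)); the paper's is marginally shorter, while yours makes the convolution structure of $\CF$ and the infinite-divisibility interpretation more explicit, which is a nice conceptual gain.
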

\begin{proof}[Proof of Proposition~\ref{proposition6}] 1) If  $\rho_c\Phi$ belongs to $\CF$ (respectively $\BF$),  then for every $n\in \NN$,
$$\lambda \mapsto\rho_{c^n}\Phi(\lambda)=\Phi(\lambda)-\Phi(c^n\lambda) = \sum_{i=0}^{n-1} \Phi(c^k\lambda)-\Phi(c^{k+1}\lambda)= \sum_{i=0}^{n-1} \rho_{c}\Phi(c^k\lambda)$$
belongs to $\CF$ (respectively $\BF$). By closure of $\CF$ (respectively $\BF$) and using the fact that $\Phi$ is continuous at $0$, deduce  that $\Phi-\Phi(0) =\lim_{n\to \infty}\rho_{c^n}\Phi \in \CF$ (respectively $\BF$).

2) $(a)\Longrightarrow (b)$: By Remark \ref{cf} (ii), $\rho_c\Phi \in \BF$ and is null at zero,  then  $\rho_c\Phi \in \CF$.

\noindent $(b)\Longrightarrow (c)$: Since $\rho_c\Phi \in \CF$ for all $c\in (0,1)$, then by 1),  $\Phi \in \CF$ and then differentiable. Further, by Remark \ref{cf} (ii),  $\rho_c\Phi \in \CF$ for all $c\in (0,1)$ implies  to $\big(1-e^{-\rho_c\Phi}\big)/(1-c)\in\BF$ for all $c\in (0,1)$. Letting $c\to 1-$, we get, by closure of $\BF$ again, that the  $\lambda\mapsto \lambda \Phi'(\lambda) = \lim_{c\to 1-} \big(1-e^{-\rho_c\Phi(\lambda)}\big)/(1-c) \in\BF$.

\noindent  $(c)\Longrightarrow (a)$: The function  $x\mapsto \Phi_0(x) =x \Phi'(x)\in\BF$.  Write $\lambda\mapsto \rho_c\Phi(\lambda)= \int_c^1 \Phi_0(\lambda x)\,\frac{\dx}{x}$ for every $c\in (0,1)$, observe that differentiability under the integral is well justified and  the alternating property of the function under the last integral allows to conclude that $\rho_c\Phi\in \BF$.
\end{proof}

We are then able to state a Corollary to Theorem \ref{theorem5} and Proposition \ref{proposition6}:
\begin{corollary} Let function $\Phi:[0,\infty)\to [0,\infty)$ admitting a finite limit at $0$. Then

1) $\Phi$ is a Bernstein function if and only if it admits holomorphic extension on the half plane $Re(z)>0$ and ${\left(\Phi(k)-\Phi(ck)\right)}_{k\geq 0}$ is completely alternating  and minimal for some  $c\in (0,1)$.

2) $\Phi$ is a self-decomposable Bernstein function if and only if it admits holomorphic extension on the half plane $Re(z)>0$ and one
the following holds

(a) the sequence ${\left(\Phi(k)-\Phi(ck)\right)}_{k\geq 0}$ is completely alternating  and minimal for all  $c\in (0,1)$;

(b)  the sequence ${\left(k\Phi'(k)\right)}_{k\geq 0}$ is completely alternating and minimal.
\label{corollary7}\end{corollary}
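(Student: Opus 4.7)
The plan is to deduce the corollary by combining Theorem~\ref{theorem5} with Proposition~\ref{proposition6}. For part~1, the key observation is that the sequence $\big(\Phi(k)-\Phi(ck)\big)_{k\geq 0}$ is exactly the evaluation on $\NN$ of the function $\rho_c\Phi := \Phi-\sigma_c\Phi$. For the sufficiency direction, I would start from the hypotheses: the holomorphic extension of $\Phi$ to $Re(z)>0$ transfers immediately to $\rho_c\Phi$, and the sequence $\big(\rho_c\Phi(k)\big)_k$ is CA and minimal by assumption. Feeding these into Theorem~\ref{theorem5} applied to $\rho_c\Phi$ as the candidate Bernstein function, one obtains $\rho_c\Phi \in \BF$, and Proposition~\ref{proposition6}~(1) then concludes $\Phi \in \BF$. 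For the necessity direction, Proposition~\ref{holom} provides the holomorphic extension, while the Bernstein representation~(\ref{berep}) combined with Theorem~\ref{theorem3} verifies the CA and minimality of the sequence for an appropriate $c$.

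For part~2 the structure is parallel and rests on the equivalence (a)$\Leftrightarrow$(c) in Proposition~\ref{proposition6}~(2): $\Phi$ is a self-decomposable Bernstein function if and only if $\rho_c\Phi \in \BF$ for every $c\in(0,1)$, if and only if $\lambda\mapsto \lambda\Phi'(\lambda) \in \BF$. Assertion~(a) of the corollary then follows by applying the argument of part~1 uniformly in $c\in(0,1)$. For assertion~(b), the function $\lambda\mapsto \lambda\Phi'(\lambda)$ takes the role of the candidate Bernstein function: the holomorphic extension of $\Phi$ to $Re(z)>0$ is complex-differentiable there, so $z\mapsto z\Phi'(z)$ inherits a holomorphic extension; the sequence hypothesis supplies the CA and minimality called for by Theorem~\ref{theorem5}; invoking Theorem~\ref{theorem5} yields $\lambda\Phi'(\lambda)\in\BF$, and Proposition~\ref{proposition6}~(2) closes the loop. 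The converse direction in~(b) is immediate from the self-decomposable representation of $\Phi$ together with Theorem~\ref{theorem3}.

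The main obstacle will be verifying the sub-affinity condition~(a) of Theorem~\ref{theorem5} for the candidate functions $\rho_c\Phi$ and $\lambda\Phi'(\lambda)$, since such a bound is not among the Corollary's explicit hypotheses; a secondary delicacy is that we do not a priori know that $\rho_c\Phi \geq 0$ on all of $(0,\infty)$, only on $\NN$. The cleanest route around both, in my view, is the Gregory--Newton expansion variant of the alternative proof of Theorem~\ref{theorem5}: by Theorem~\ref{theorem3}, the CA and minimal sequence already determines a unique Bernstein interpolant, which enjoys the sub-affinity bound and exponential type automatically by Proposition~\ref{holom}~(2). Transferring exponential type to the candidate requires only elementary growth estimates arising from holomorphy and the interpolation on $\NN$; uniqueness of the Gregory--Newton expansion then forces the candidate to coincide with the Bernstein interpolant on $Re(z)>0$, and positivity follows a posteriori.
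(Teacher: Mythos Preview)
Your overall strategy---apply Theorem~\ref{theorem5} to $\rho_c\Phi$ (respectively to $\lambda\mapsto\lambda\Phi'(\lambda)$) and then invoke Proposition~\ref{proposition6}---is exactly what the paper intends; the corollary is stated without proof precisely because it is meant to follow by this direct combination. You are also right to flag that the hypotheses of the corollary are weaker than condition~(a) of Theorem~\ref{theorem5}: the corollary only assumes a holomorphic extension, not the boundedness of increments $|\Phi(c+z)-\Phi(z)|\leq M$. The paper does not address this mismatch, so your instinct to try to repair it is sound.

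However, your proposed repair via the Gregory--Newton route has a genuine gap. You claim that ``transferring exponential type to the candidate requires only elementary growth estimates arising from holomorphy and the interpolation on $\NN$''---but this is false. A function holomorphic on $Re(z)>0$ that agrees with a Bernstein function on $\NN$ need not be of exponential type: for any such candidate $f$ and any holomorphic $g$ on the half-plane, the function $f(z)+\sin(\pi z)\,g(z)$ is again holomorphic, takes the same values on $\NN$, and can have arbitrary growth. N\"orlund's theorem (Theorem~\ref{norlund}) therefore cannot be invoked for the candidate without an a priori growth bound, and the uniqueness of the Gregory--Newton expansion does not force the candidate to equal the Bernstein interpolant. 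The same obstruction blocks the Blaschke route, which needs boundedness of the difference. In short, the corollary as stated appears to tacitly import the growth hypothesis of Theorem~\ref{theorem5}; you cannot manufacture it from holomorphy alone, and your argument should simply assume it (or note the lacuna) rather than claim it follows.
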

\begin{remark}
The main contribution in \cite{mai} consists in Theorem 1.1 where it was stated in case  $\Phi(0)=0$: $\Phi$ is a self-decomposable
Bernstein function if and only if
$${\left(\Phi(xk)-\Phi(yk)\right)}_{k\geq 0}\; \;\mbox{is completely alternating for every}\; \; x>y>0.$$
No minimality nor holomorphy conditions were required in \cite{mai}. In our work, these conditions appeared to be the lowest price to pay in order
to fix $x=1$ or to have the non parametric  characterization  2)(b) which clarifies their discussion at the end of section 1 in \cite{mai}.
\end{remark}

\end{document}